\let\cal=\mathcal
\newcommand{\qed}{
  \ifmmode
   \eqno{\qedsymbol}
  \else
    \leavevmode\unskip\penalty9999 \hbox{}\nobreak\hfill\hbox{\qedsymbol}
  \fi
}
\newcommand{\qedsymbol}{\leavevmode\vrule height 1.2ex width 1.1ex depth -.1ex}
\newenvironment{proof}{\begin{trivlist}\item[\hskip
\labelsep{\bf Proof.\quad}]}
{\hfill\qed\rm\end{trivlist}}
\newtheorem{theorem}{Theorem}[section]
\newtheorem{corollary}[theorem]{Corollary}
\newtheorem{proposition}[theorem]{Proposition}
\newtheorem{lemma}[theorem]{Lemma}
\newtheorem{example}[theorem]{Example}
\newtheorem{remark}[theorem]{Remark}
\mathchardef\emptyset="001F
\font\Bbb=msbm10 at 12 truept
\def\n{\hbox{\Bbb N}}
\def\im{\hbox{\rm im}}
\def\X{{\mathcal X}}
\def\Pr{{\mathcal P}}
\def\SF{{\mathcal {SF}}}
\def\CP{{\mathcal {CP}}}
\def\F{{\mathcal F}}
\def\E{{\mathcal E}}
\def\tensor{\otimes}
\begin{document}

\title{Covers of acts over monoids and pure epimorphisms}
\author{Alex Bailey and James Renshaw\\\small Department of Mathematics\\
\small University of Southampton\\
\small Southampton, SO17 1BJ\\
\small England\\
\small Email: alex.bailey@soton.ac.uk\\
\small j.h.renshaw@maths.soton.ac.uk}
\date{June 2012}
\maketitle

\begin{abstract}
In 2001 Enochs' celebrated flat cover conjecture was finally proven and the proofs (two different proofs were presented in the same paper~\cite{bican-01}) have since generated a great deal of interest among researchers. In particular the results have been recast in a number of other categories and in particular for additive categories (see for example \cite{aldrich-01}, \cite{bashir-04}, \cite{rosicky-02} and \cite{rump-09}). In 2008, Mahmoudi and Renshaw considered a similar problem for acts over monoids but used a slightly different definition of cover. They proved that in general their definition was not equivalent to Enochs', except in the projective case, and left open a number of questions regarding the `other' definition. This `other' definition is the subject of the present paper and we attempt to emulate some of Enochs' work for the category of acts over monoids and concentrate, in the main, on strongly flat acts. We hope to extend this work to other classes of acts, such as injective, torsion free, divisible and free, in a future report.
\end{abstract}

{\bf Key Words} Semigroups, monoids, acts, strongly flat, projective, condition (P), covers, precovers, pure epimorphisms, colimits

{\bf 2010 AMS Mathematics Subject Classification} 20M50.

\section{Introduction and Preliminaries}
Let $S$ be a monoid. Throughout, unless otherwise stated, all acts will be right $S-$acts and all congruences right $S-$congruences. We refer the reader to~\cite{howie-95} for basic results and terminology in semigroups and monoids and to  \cite{ahsan-08} and \cite{kilp-00} for those concerning acts over monoids.

Enochs' conjecture, that all modules over a unitary ring, have a flat cover was finally proven in 2001. In 2008, Mahmoudi and Renshaw \cite{renshaw-08}, initiated a study of flat covers of acts over monoids. Their definition of cover concerned coessential epimorphisms and, except for the case of projective covers, proves to be different to that given by Enochs. In the present paper we attempt to initiate the study of Enochs' notion of cover to the category of acts over monoids and focus primarily on $\SF-$covers where $\SF$ is the class of strongly flat $S-$acts.

After preliminary results and definitions we provide some key results on directed colimits for acts over monoids. Some of these may be generally known but there are few references in the literature for results on direct limits of $S-$acts that we felt it necessary to include the more important ones here. Pure epimorphisms were studied by Stenstr\"om in~\cite{stenstrom-71} and we extend these in Section 3. In Section 4 we introduce the concept of an $\X-$cover and $\X-$precover for a class of $S-$acts $\X$. This is the anologue of Enochs' definition for covers of modules over rings and we prove that for those classes that are closed under isomorphisms and directed colimts, the existence of a precover implies the existence of a cover. We also provide a necessary and sufficient condition and a number of sufficient conditions for the existence of a precover. Finally in Section 5 we apply some of these results to the case when $\X$ is the class of strongly flat $S-$acts.

\bigskip

An $S-$act $P$ is called {\em projective} if given any $S-$epimorphism $f:A \rightarrow B$, whenever there is an $S-$map $g:P \rightarrow B$ there exists an $S-$map $h:P \rightarrow A$ such that the following diagram commutes
$$
\begin{tikzpicture}[description/.style={fill=white,inner sep=2pt}]
\matrix (m) [matrix of math nodes, row sep=3em,
column sep=2.5em, text height=1.5ex, text depth=0.25ex]
{A& B\\
 &P \\};
\path[->,font=\scriptsize]
(m-2-2) edge node[auto, right] {$g$} (m-1-2)
(m-2-2) edge node[auto, left] {$h$} (m-1-1)
(m-1-1) edge node[auto, above] {$f$} (m-1-2);
\end{tikzpicture}
$$
A right $S-$act $A$ is said to be {\em flat} if given any monomorphism of left $S-$acts $f:X \rightarrow Y$, the induced map $1 \otimes f:A \otimes_S X \rightarrow A \otimes_S Y$, $a \otimes x \mapsto a \otimes f(a)$, is also a monomorphism. On the other hand, an $S-$monomorphism $g:A\to B$ is said to be {\em pure}, \cite{renshaw-86}, if for all left $S-$acts $X$,  the induced map $A\otimes_S X\to B\otimes_SX$ is also a monomorphism. Note that there are in fact two distinct notions of pure monomorphism in the literature. See~\cite[Section 7.4]{ahsan-08} for more details. In 1969 Lazard proved that flat modules are directed colimits of finitely generated free modules \cite{lazard-69}. In 1971 Stenstr$\ddot{\text{o}}$m showed that the acts which satisfy the same property are different from flat acts \cite{stenstrom-71}. In fact they are the acts $A$ where $A \otimes_S -$ preserves pullbacks and equalizers, or equivalently those that satisfy the two interpolation conditions $(P)$ and $(E)$. These acts have come to be known as {\em strongly flat} acts.

A right $S-$act $A$ is said
to satisfy {\em condition $(P)$} if whenever $au = a'u'$ with $u,u' \in
S, a,a' \in A$, there exists $a''\in A, s,s' \in S$ with $a = a''s, a' =
a''s'$ and $su = s'u'$ whilst $A$ is said to satisfy {\em condition
$(E)$} if whenever $au = au'$ with $a \in A, u,u' \in S$, there exists
$a'' \in A, s \in S$ with $a = a''s$ and $su = su'$. 

Throughout this paper we shall denote the class of all projective $S-$acts by $\Pr_S$, the class of all strongly flat $S-$acts by $\SF_S$, the class of all $S-$acts that satisfy condition $( P)$ by $\CP_S$, the class of all $S-$acts that satisfy condition $(E)$ by $\E_S$ and the class of all flat acts by $\F_S$. Normally we shall simply omit the subscript.

It is well-known that in general
$$
\Pr\subsetneq\SF\subsetneq\CP\subsetneq\F
$$

Basic results on indecomposable acts, coproducts, pushouts and pullbacks of acts over monoids can be found in \cite{kilp-00} and \cite{ahsan-08}. From~\cite[Proposition 4.1.5, Corollary 5.3.23, Proposition 5.2.17 and Proposition 5.2.5]{ahsan-08} (see also~\cite[Lemma III.9.3 \& Lemma III.9.5]{kilp-00}) we have

\begin{lemma} \label{coproduct-decompose}
Let $S$ be a monoid and let $X=\dot\bigcup X_i$ be a coproduct of $S-$acts. For each of the cases $\X=\Pr, \X=\SF,\X=\CP$ and $\X=\F$ we have $X\in\X$ if and only if each $X_i\in\X$.
\end{lemma}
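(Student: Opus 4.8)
The plan is to exploit a single structural feature of a coproduct $X=\dot\bigcup X_i$: the $S$-action preserves components, so each $X_i$ is a subact, the $X_i$ are pairwise disjoint, and $xs\in X_i$ whenever $x\in X_i$. In particular, if $xs=yt$ with $x\in X_i$, $y\in X_j$ and $s,t\in S$, then $xs\in X_i$ and $yt\in X_j$ force $i=j$. This ``locality'' observation settles the conditions $(P)$ and $(E)$ (hence $\SF$) at once, while the flat and projective cases will be handled by the good behaviour of, respectively, the tensor functor and free acts under coproducts.

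For condition $(P)$ I would argue as follows. If each $X_i$ satisfies $(P)$ and $au=a'u'$ in $X$, then by the locality observation $a$ and $a'$ lie in the same $X_i$, and applying $(P)$ inside $X_i$ produces the required $a''\in X_i\subseteq X$ and $s,s'\in S$. Conversely, if $X$ satisfies $(P)$ and $au=a'u'$ with $a,a'\in X_i$, then $(P)$ in $X$ yields $a''\in X$ with $a=a''s$; since $a\in X_i$ and the action preserves components, $a''\in X_i$, so $(P)$ holds in $X_i$. Condition $(E)$ is treated by the identical argument applied to a single element, and since an act is strongly flat exactly when it satisfies both $(P)$ and $(E)$, the $\SF$ case follows by combining the two.

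For the flat case I would use that $-\tensor_S M$ preserves coproducts naturally in $M$, giving a natural isomorphism $(\dot\bigcup_i X_i)\tensor_S M\cong\dot\bigcup_i(X_i\tensor_S M)$. Given a monomorphism $f:M\to N$ of left $S$-acts, naturality identifies $1_X\tensor f$ with the coproduct map $\dot\bigcup_i(1_{X_i}\tensor f)$. A coproduct of maps of (underlying) sets is injective precisely when each component map is injective, so $1_X\tensor f$ is a monomorphism for every such $f$ if and only if each $1_{X_i}\tensor f$ is; that is, $X$ is flat if and only if every $X_i$ is flat.

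For the projective case, the implication that a coproduct of projectives is projective is the easy direction: given an epimorphism $f:A\to B$ and $g:X\to B$, restrict to $g_i:=g|_{X_i}$, lift each to $h_i:X_i\to A$ by projectivity of $X_i$, and assemble the $h_i$ into $h:X\to A$ by the universal property of the coproduct. The converse is where I expect the only real subtlety: the naive idea of exhibiting $X_i$ as a retract of $X$ fails, since there is in general no $S$-retraction $X\to X_i$ (the other components cannot be collapsed onto $X_i$ without a fixed point). Instead I would fix free covers $\pi_i:F_i\to X_i$, form the free act $F=\dot\bigcup_i F_i$ with the coproduct epimorphism $\pi:F\to X$, and use projectivity of $X$ to split $\pi$ by some $\sigma:X\to F$ obtained by lifting $1_X$ along $\pi$. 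Because $\pi$ maps $F_i$ into $X_i$ and the components are disjoint, $\pi^{-1}(X_i)=F_i$, so $\sigma$ restricts to a section $X_i\to F_i$ of $\pi_i$; thus $X_i$ is a retract of the free act $F_i$ and is therefore projective. The main obstacle is precisely this projectivity converse, where the missing retraction $X\to X_i$ must be replaced by a splitting of a coproduct of free covers.
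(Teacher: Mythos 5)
Your proposal is correct in all four cases. Note, however, that the paper does not actually prove this lemma at all: it simply cites the standard references (Ahsan--Liu, Propositions 4.1.5, 5.2.5, 5.2.17 and Corollary 5.3.23, and Kilp--Knauer--Mikhalev, Lemmas III.9.3 and III.9.5), so what you have written is a self-contained argument where the paper offers only a pointer to the literature. Your route is the natural elementary one, and its key merits are that it isolates the single structural fact doing the work in the $(P)$, $(E)$ and hence $\SF$ cases (the action preserves components, so any equation $au=a'u'$ forces $a,a'$ into the same component, and any ``interpolant'' $a''$ with $a=a''s$ lies in the component of $a$), that it handles $\F$ by the equally standard observation that $-\tensor_S M$ commutes with coproducts so that $1_X\tensor f$ is a disjoint union of the maps $1_{X_i}\tensor f$, and that it correctly identifies and circumvents the one genuine trap in the projective converse: there is no retraction $X\to X_i$ in the category of acts (no zero object), so one must instead split the coproduct $\dot\bigcup_i F_i\to X$ of free covers and observe that the splitting respects components, exhibiting each $X_i$ as a retract of a free act. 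The citation route buys brevity and defers to established results proved once in the textbooks; your route buys transparency and makes the lemma independent of external sources -- the only facts you use without proof (tensor commutes with coproducts, every act is an epimorphic image of a free act, retracts of free acts are projective) are standard and strictly more elementary than the lemma itself.
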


The following lemma will be useful in one of our main results later.
\begin{lemma}\label{indecomposable-epi-lemma}
Let $S$ be a monoid, let $X$ be an indecomposable $S-$act and let $f:X\to Y$ be an $S-$epimorphism. Then $Y$ is indecomposable.
\end{lemma}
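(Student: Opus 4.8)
The plan is to argue by contradiction, using the coproduct characterisation of indecomposability. Recall that a (non-empty) $S$-act is \emph{indecomposable} precisely when it cannot be written as a coproduct $Y_1\,\dot\cup\,Y_2$ of two non-empty subacts. So I would suppose that $Y$ admits a non-trivial decomposition $Y=Y_1\,\dot\cup\,Y_2$ and show that this forces a corresponding decomposition of $X$, contradicting the hypothesis that $X$ is indecomposable.

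First I would set $X_i=f^{-1}(Y_i)$ for $i=1,2$. The key routine observation is that the preimage of a subact under an $S$-map is again a subact: if $x\in X_i$ and $s\in S$ then $f(xs)=f(x)s\in Y_i$ because $Y_i$ is closed under the action, so $xs\in X_i$. Next, since $Y=Y_1\cup Y_2$ with $Y_1\cap Y_2=\emptyset$ and every element of $X$ is sent somewhere in $Y$, the sets $X_1,X_2$ are disjoint and cover $X$. Finally, surjectivity of $f$ guarantees that neither $X_i$ is empty, as each $Y_i$ is non-empty and therefore has a preimage point. Putting these together yields $X=X_1\,\dot\cup\,X_2$ as a coproduct of two non-empty subacts, contradicting the indecomposability of $X$; hence no such decomposition of $Y$ exists, and $Y$ is indecomposable (it is non-empty because $f$ is onto and $X\neq\emptyset$).

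There is essentially no hard step here: the lemma is just the transfer of a partition backwards along a surjection. The only points requiring a little care are the three structural checks, namely that $f^{-1}(Y_i)$ is a subact, that the two preimages partition $X$, and that each is non-empty; all three are immediate consequences of $f$ being a surjective $S$-map. An alternative, equally short route avoids contradiction altogether: one characterises indecomposability by connectedness (any two elements are joined by a zig-zag $a_0,a_1,\dots,a_n$ with $a_jS\cap a_{j+1}S\neq\emptyset$ for consecutive terms), lifts a given pair of elements of $Y$ to $X$ via surjectivity, joins their lifts by a zig-zag in the indecomposable $X$, and pushes that zig-zag forward through $f$ to connect the original pair in $Y$.
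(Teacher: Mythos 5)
Your proposal is correct and is essentially identical to the paper's own proof: both pull back a hypothetical non-trivial decomposition $Y=Y_1\,\dot\cup\,Y_2$ to $X_i=f^{-1}(Y_i)$ and use surjectivity to see that these are non-empty disjoint subacts partitioning $X$, contradicting indecomposability. The extra structural checks you spell out (and the alternative zig-zag argument you sketch) are fine but not needed beyond what the paper records.
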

\begin{proof}
Suppose that $Y$ is not indecomposable so that there are non-empty $S-$subacts $ Y_1\ne Y_2\subseteq Y$ with $Y=Y_1\dot\cup Y_2$. Then Let $X_i = f^{-1}(Y_i), i = 1,2$ and note that $X_i$ are non-empty $S-$subacts of $X$ and that $X=X_1\dot\cup X_2$ with $X_1\ne X_2$ giving a contradiction.
\end{proof}

Let $A$ be an $S-$act. We say that a projective $S-$act $C$ together with an $S-$epimorphism $f : C \to A$ is a {\em projective cover} of $A$ if there is no proper subact $B$ of $C$ such that $f|_B$ is onto. If we replace `projective' by `strongly flat' in this definition then we have a {\em strongly flat cover}. A monoid $S$ is called {\em perfect} if all $S-$acts have a projective cover.

\smallskip

We define (see \cite{stenstrom-71}~and~\cite{normak-77}) $A$ to be {\em finitely presented} if $A\cong F/\rho$ where $F$ is finitely generated free and $\rho$ is finitely generated.

The following remark will be useful when we come to consider {\em precovers} in section 3.

\begin{remark}\label{remark-1}
Let $S$ be a monoid, let $A$ be an $S-$act and let $\theta$ be a congruence on $A$. Let $\rho$ be a congruence on $A/\theta$ and let $\theta/\rho = \ker(\rho^\natural\theta^\natural)$. Then clearly $\theta/\rho$ is a congruence on $A$ containing $\theta$ and $A/(\theta/\rho) = (A/\theta)/\rho$. Moreover $\theta/\rho=\theta$ if and only if $\rho=1_{F/\theta}$.
\end{remark}

Let $\lambda$ be an infinite cardinal and let $\X$ be a class of $S-$acts. By a {\em $\lambda-$skeleton of $S-$acts}, $\X_\lambda$ we mean a set of pairwise non-isomorphic $S-$acts such that for each act $A\in\X$ with $|A|<\lambda$, there exists a (necessarily unique) act $A_\lambda \in\X_\lambda$ such that $A\cong A_\lambda$.

\begin{remark}\label{bounded-remark}
Let $S$ be a monoid, let $\X$ be a class of $S-$acts and suppose that there exists a cardinal $\lambda$ such that every indecomposable $S-$act $X\in\X$ is such that $|X|<\lambda$. Then it is reasonably clear that the class of indecomposable $S-$acts forms a set and so must contain a $\lambda-$skeleton.
\end{remark}

\section{Colimits and Directed Colimits}

There is surprising little in the literature on direct limits and colimits of acts and in addition some inconsistencies with notation (see \cite{kilp-00} and \cite{renshaw-86}). We include here a collection of results on direct limits, some of which will be needed in later sections.

Let $I$ be a set with a preorder (that is, a reflexive and transitive relation). A {\em direct system} is a collection of $S-$acts $(X_i)_{i \in I}$ together with $S-$maps $\phi_{i,j}:X_i \rightarrow X_j$ for all $i \leq j \in I$ such that
\begin{enumerate}
\item $\phi_{i,i}=1_{X_i}$, for all $i \in I$; and
\item $\phi_{j,k} \circ \phi_{i,j}=\phi_{i,k}$ whenever $i \leq j \leq k$.
\end{enumerate}
The {\em colimit} of the system $(X_i,\phi_{i,j})$ is an $S-$act $X$ together with $S-$maps $\alpha_i:X_i \rightarrow X$ such that
\begin{enumerate}
\item $\alpha_j \circ \phi_{i,j}=\alpha_i$, whenever $i \leq j$,
\item If $Y$ is an $S-$act and $\beta_i:X_i \rightarrow Y$ are $S-$maps such that $\beta_j \circ \phi_{i,j}=\beta_i$ whenever $i \leq j$, then there exists a unique $S-$map $\psi:X \rightarrow Y$ such that the diagram
$$
\begin{tikzpicture}[description/.style={fill=white,inner sep=2pt}]
\matrix (m) [matrix of math nodes, row sep=3em,
column sep=2.5em, text height=1.5ex, text depth=0.25ex]
{X_i& X\\
 Y & \\};
\path[->,font=\scriptsize]
(m-1-1) edge node[auto, left] {$\beta_i$} (m-2-1)
(m-1-2) edge node[auto, right] {$\psi$} (m-2-1)
(m-1-1) edge node[auto, above] {$\alpha_i$} (m-1-2);
\end{tikzpicture}
$$
commutes for all $i \in I$.
\end{enumerate}

If the indexing set $I$ satisfies the property that for all $i,j \in I$ there exists $k \in I$ such that $k\ge i, j$ then we say that $I$ is {\em directed}. In this case we call the colimit a {\em directed colimit}.

As with all universal constructions, the colimit, if it exists, is unique up to isomorphism. That colimits of $S-$acts do indeed exist is easy to demonstrate. In fact let $\lambda_i:X_i\to\dot\bigcup_i{X_i}$ be the natural inclusion and let $\rho$ be the right congruence on $\dot\bigcup_i{X_i}$ generated  by
$$
R=\{(\lambda_i(x_i),\lambda_j(\phi_{i,j}(x_i))) | x_i\in X_i, i\le j\in I\}.
$$
Then $X=\left(\dot\bigcup_i{X_i}\right)/\rho$ and $\alpha_i:X_i\to X$ given by $\alpha_i(x_i)=\lambda_i(x_i)\rho$ are such that $(X,\alpha_i)$ is a colimit of $(X_i,\phi_{i,j})$.  In addition, if the index set $I$ is directed then
$$
\rho=\{(\lambda_i(x_i),\lambda_j(x_j)) | \text{ there exists }k\ge i,j\text{ with }\phi_{i,k}(x_i)=\phi_{j,k}(x_j)\}.
$$
See (\cite[Theorem I.3.1 \& Theorem I.3.17]{renshaw-85}) for more details. We shall subsequently talk of {\em the} (directed) colimit of a direct system.

\begin{lemma}[{\cite[Lemma 3.5 \& Corollary 3.6]{renshaw-86}}]\label{direct-limit-lemma}
Let $(X_i,\phi_{i,j})$ be a direct system of $S-$acts with directed index set and let $(X,\alpha_i)$ be the directed colimit. Then $\alpha_i(x_i)=\alpha_j(x_j)$ if and only if $\phi_{i,k}(x_i) = \phi_{j,k}(x_j)$ for some $k\ge i,j$. Consequently $\alpha_i$ is a monomorphism if and only if $\phi_{i,k}$ is a monomorphism for all $k\ge i$.
\end{lemma}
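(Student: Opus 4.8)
The plan is to obtain the first equivalence directly from the closed-form description of the congruence $\rho$ in the directed case that is recorded in the preliminary discussion above. Since $X=\left(\dot\bigcup_i X_i\right)/\rho$ and $\alpha_i(x_i)=\lambda_i(x_i)\rho$, we have $\alpha_i(x_i)=\alpha_j(x_j)$ if and only if $(\lambda_i(x_i),\lambda_j(x_j))\in\rho$, and because $I$ is directed the stated description of $\rho$ tells us this holds precisely when $\phi_{i,k}(x_i)=\phi_{j,k}(x_j)$ for some $k\ge i,j$, which is exactly the claim. Should a fully self-contained argument be wanted, I would instead let $\sigma$ denote the relation on $\dot\bigcup_i X_i$ given by the right-hand side and check that $\sigma$ is a right congruence, that it contains the generating set $R$ (taking $k=j$ and using $\phi_{j,j}=1_{X_j}$), and that it is contained in every right congruence containing $R$; the resulting equality $\rho=\sigma$ then yields the first equivalence.

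For the `consequently' clause I would use the standard fact that in the category of $S-$acts a morphism is a monomorphism exactly when it is injective. Suppose first that $\phi_{i,k}$ is a monomorphism for all $k\ge i$, and let $x,x'\in X_i$ with $\alpha_i(x)=\alpha_i(x')$. Applying the first part with $j=i$ produces some $k\ge i$ with $\phi_{i,k}(x)=\phi_{i,k}(x')$, whence $x=x'$ by injectivity of $\phi_{i,k}$; thus $\alpha_i$ is a monomorphism. Conversely, suppose $\alpha_i$ is a monomorphism, fix $k\ge i$, and suppose $\phi_{i,k}(x)=\phi_{i,k}(x')$ for some $x,x'\in X_i$. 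Then the first part (again with $j=i$, witnessed by this very $k$) gives $\alpha_i(x)=\alpha_i(x')$, so $x=x'$; hence $\phi_{i,k}$ is injective. Both directions therefore reduce to the first equivalence specialised to $j=i$.

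The only genuinely substantive point is the passage from the generators $R$ to the closed-form description of $\rho$ valid in the directed case, that is, the verification that $\sigma$ is transitive. This is where directedness is essential: given $\phi_{i,k}(x_i)=\phi_{j,k}(x_j)$ and $\phi_{j,l}(x_j)=\phi_{m,l}(x_m)$ one selects an upper bound $n\ge k,l$ and composes with $\phi_{k,n}$ and $\phi_{l,n}$ to obtain $\phi_{i,n}(x_i)=\phi_{j,n}(x_j)=\phi_{m,n}(x_m)$. Since that description is already supplied in the preamble, however, the lemma collapses to the short bookkeeping above, with the first equivalence doing all the work and the second statement following immediately.
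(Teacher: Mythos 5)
Your proof is correct and follows exactly the route the paper intends: the paper states this lemma only as a citation to Renshaw (1986), with the real content being the closed-form description of the congruence $\rho$ in the directed case recorded immediately beforehand, and your argument derives both the equivalence and the ``consequently'' clause from that description (including the correct observation that directedness is what makes the relation $\sigma$ transitive, and the standard identification of monomorphisms of $S-$acts with injective $S-$maps). No gaps.
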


In fact the following is now easy to establish.

\begin{theorem}
Let $S$ be a monoid, let $(X_i,\phi_{i,j})$ be a direct system of $S-$acts with directed index set $I$ and let $X$ be an $S-$act and $\alpha_i:X_i\to X$ be such that
$$
\begin{tikzpicture}[description/.style={fill=white,inner sep=2pt}]
\matrix (m) [matrix of math nodes, row sep=3em,
column sep=2.5em, text height=1.5ex, text depth=0.25ex]
{X_i& &X_j \\
 & X &\\ };
\path[->,font=\scriptsize]
(m-1-1) edge node[auto, left] {$\alpha_i$} (m-2-2)
(m-1-3) edge node[auto, right] {$\alpha_j$} (m-2-2)
(m-1-1) edge[->] node[auto,above] {$\phi_{i,j}$} (m-1-3);
\end{tikzpicture}
$$
commutes for all $i\le j$ in $I$. Then $(X,\alpha_i)$ is the directed colimit of $(X_i,\phi_{i,j})$ if and only if
\begin{enumerate}
\item for all $x\in X$ there exists $i\in I$ and $x_i\in X_i$ such that $x=\alpha_i(x_i)$,
\item for all $i,j\in I$, $\alpha_i(x_i)=\alpha_j(x_j)$ if and only if $\phi_{i,k}(x_i) = \phi_{j,k}(x_j)$ for some $k\ge i,j$.
\end{enumerate}
\end{theorem}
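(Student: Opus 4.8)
The plan is to compare the given cone $(X,\alpha_i)$ with the explicit directed colimit $(C,\gamma_i)$ constructed just above the theorem, where $C=\left(\dot\bigcup_i X_i\right)/\rho$ and $\gamma_i(x_i)=\lambda_i(x_i)\rho$. First I would record that $(C,\gamma_i)$ itself satisfies conditions (1) and (2): condition (1) is immediate from the construction, since every element of $C$ has the form $\lambda_i(x_i)\rho=\gamma_i(x_i)$, while condition (2) is precisely the content of Lemma~\ref{direct-limit-lemma} (equivalently, the explicit description of $\rho$ available because $I$ is directed).

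For the forward implication, suppose $(X,\alpha_i)$ is the directed colimit. By uniqueness of colimits up to isomorphism there is an isomorphism $\theta:C\to X$ with $\theta\gamma_i=\alpha_i$ for all $i$. Since conditions (1) and (2) hold for $(C,\gamma_i)$ and are plainly preserved under transport along the cone-compatible isomorphism $\theta$ (using that $\theta$ is a bijection commuting with the structure maps), they hold for $(X,\alpha_i)$ as well.

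For the converse, assume (1) and (2). Because the triangles commute, $(X,\alpha_i)$ is a cone over the system, so the universal property of $C$ yields a unique $S$-map $\psi:C\to X$ with $\psi\gamma_i=\alpha_i$. I would then show $\psi$ is bijective. Surjectivity is immediate from condition (1): any $x\in X$ equals $\alpha_i(x_i)=\psi(\gamma_i(x_i))$ for suitable $i,x_i$. For injectivity, take elements $\gamma_i(x_i),\gamma_j(x_j)$ of $C$ (every element of $C$ has this form) with $\psi(\gamma_i(x_i))=\psi(\gamma_j(x_j))$, that is $\alpha_i(x_i)=\alpha_j(x_j)$; condition (2) produces $k\ge i,j$ with $\phi_{i,k}(x_i)=\phi_{j,k}(x_j)$, and then Lemma~\ref{direct-limit-lemma} applied to the colimit $C$ gives $\gamma_i(x_i)=\gamma_j(x_j)$. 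Hence $\psi$ is an $S$-isomorphism commuting with the two cones, from which it follows by a routine diagram chase (compose the mediating map out of $C$ with $\psi^{-1}$) that $(X,\alpha_i)$ inherits the universal property of $C$ and is therefore the directed colimit.

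The only subtlety worth flagging is the direction of the injectivity argument: condition (2) is a statement about the target cone $X$, whereas Lemma~\ref{direct-limit-lemma} is a statement about the canonical colimit $C$. The argument must first convert equality in $X$ into the shared-image condition $\phi_{i,k}(x_i)=\phi_{j,k}(x_j)$ via (2), and then feed that condition back into Lemma~\ref{direct-limit-lemma} to deduce equality in $C$. Once this pivot is in place everything else is formal, relying only on uniqueness of colimits and the explicit construction already recorded.
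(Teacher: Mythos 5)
Your proof is correct and takes the route the paper itself intends: the paper gives no written proof of this theorem, remarking only that it is ``now easy to establish'' from Lemma~\ref{direct-limit-lemma} and the explicit construction of the colimit, and your argument --- transporting conditions (1) and (2) along the canonical isomorphism $\theta:C\to X$ in the forward direction, and showing the mediating map $\psi:C\to X$ is a bijective $S$-map (hence an isomorphism of acts) in the converse --- is exactly the natural completion of that sketch. The pivot you flag, converting equality in $X$ into eventual equality $\phi_{i,k}(x_i)=\phi_{j,k}(x_j)$ via condition (2) and then feeding that back into Lemma~\ref{direct-limit-lemma} to get equality in $C$, is handled correctly and is indeed the only point requiring care.
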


We shall use these two basic properties of directed colimits without further reference.

\begin{lemma}\label{directlimit-lemma}
Let $S$ be a monoid, let $(X_i,\phi_{i,j})$ be a direct system of $S-$acts with directed index set $I$ and directed colimit $(X,\alpha_i)$. For every family $y_1,\ldots,y_n\in X$ and relations
$$
y_{j_i}s_i=y_{k_i}t_i \quad 1\le i\le m
$$
there exists some $l \in I$ and $x_1,\ldots,x_n\in X_l$ such that $\alpha_l(x_r)=y_r$ for $1\le r\le n$, and
$$
x_{j_i}s_i=x_{k_i}t_i \hbox{ for all }1\le i\le m.
$$
\end{lemma}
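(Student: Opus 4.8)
The plan is to lift the finitely many elements $y_1,\ldots,y_n$ to a common index and then successively improve that index, using directedness, so that each of the finitely many relations is witnessed at a single level.

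First I would invoke the surjectivity property of the directed colimit: for each $r$ there is an index $p_r\in I$ and an element $x_r^{(0)}\in X_{p_r}$ with $\alpha_{p_r}(x_r^{(0)})=y_r$. Since $I$ is directed and there are only finitely many of the $p_r$, I can choose $l_0\in I$ with $l_0\ge p_r$ for all $r$ and set $x_r'=\phi_{p_r,l_0}(x_r^{(0)})$. The cocone identity $\alpha_{l_0}\circ\phi_{p_r,l_0}=\alpha_{p_r}$ then gives $\alpha_{l_0}(x_r')=y_r$, so all the preimages now live in the single act $X_{l_0}$.

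Next I would push each relation down to a finite level. Fix a relation index $i$. Since $\alpha_{l_0}$ is an $S$-map, the equation $y_{j_i}s_i=y_{k_i}t_i$ reads $\alpha_{l_0}(x'_{j_i}s_i)=\alpha_{l_0}(x'_{k_i}t_i)$. By Lemma~\ref{direct-limit-lemma} there is an index $q_i\ge l_0$ with $\phi_{l_0,q_i}(x'_{j_i}s_i)=\phi_{l_0,q_i}(x'_{k_i}t_i)$, and since $\phi_{l_0,q_i}$ is an $S$-map this is exactly $\phi_{l_0,q_i}(x'_{j_i})s_i=\phi_{l_0,q_i}(x'_{k_i})t_i$. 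Carrying this out for each of the finitely many $i$ produces indices $q_1,\ldots,q_m$.

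Finally, using directedness once more, I would choose $l\in I$ with $l\ge q_i$ for all $i$ (hence $l\ge l_0$) and set $x_r=\phi_{l_0,l}(x_r')$. Applying $\phi_{q_i,l}$ to the equation at level $q_i$, and using $\phi_{q_i,l}\circ\phi_{l_0,q_i}=\phi_{l_0,l}$ together with the fact that $\phi_{q_i,l}$ is an $S$-map, yields $x_{j_i}s_i=x_{k_i}t_i$ for every $i$, while the cocone identity gives $\alpha_l(x_r)=\alpha_{l_0}(x_r')=y_r$. The only point needing care, and the obstacle such as it is, is the bookkeeping of the two applications of directedness: because both $n$ and $m$ are finite, directedness supplies a single common upper bound at each stage, and it is essential throughout that the $\alpha$'s and $\phi$'s are $S$-maps so that the scalars $s_i,t_i$ pass through the structure maps.
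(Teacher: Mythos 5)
Your proof is correct and takes essentially the same approach as the paper's: lift the finitely many $y_r$ to finite levels, use Lemma~\ref{direct-limit-lemma} to witness each of the finitely many relations at some index, and push everything to a common upper bound provided by directedness, relying on the $S$-equivariance of the $\alpha$'s and $\phi$'s throughout. The only difference is bookkeeping --- you pool all preimages at a common level $l_0$ before treating the relations, whereas the paper keeps them at their individual levels $m(r)$ and takes a single upper bound only at the end --- which changes nothing of substance.
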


\begin{proof}
Given $y_1,\ldots,y_n \in X$ there exists $m(1),\ldots,m(n) \in I$ and $y'_r \in X_{m(r)}$ such that $\alpha_{m(r)}(y'_r)=y_r$ for all $1 \le r \le n$. So for all $1\le i\le m$ we have
$$
\alpha_{m(j_i)}(y'_{j_i}s_i)=\alpha_{m(j_i)}(y'_{j_i})s_i=\alpha_{m(k_i)}(y'_{k_i})t_i=\alpha_{m(k_i)}(y'_{k_i}t_i)
$$
and so there exist $l_i \ge m(j_i),m(k_i)$ such that for all $1\le i\le m$
$$
\phi_{m(j_i),l_i}(y'_{j_i})s_i=\phi_{m(j_i),l_i}(y'_{j_i}s_i)=\phi_{m(k_i),l_i}(y'_{k_i}t_i)=\phi_{m(k_i),l_i}(y'_{k_i})t_i.
$$
Let $l \ge l_1,\ldots,l_m$. Then there exist $\phi_{m(1),l}(y'_1),\ldots,\phi_{m(n),l}(y'_n) \in X_l$ such that $\alpha_l(\phi_{m(r),l}(y'_r))=\alpha_{m(r)}(y'_r)=y_r$ for all $1\le r\le n$ and
$$
\phi_{m(j_i),l}(y'_{j_i})s_i=\phi_{l_i,l}\left(\phi_{m(j_i),l_i}(y'_{j_i})\right)s_i = \phi_{l_i,l}\left(\phi_{m(k_i),l_i}(y'_{k_i})\right)t_i = \phi_{m(k_i),l}(y'_{k_i})t_i
$$
 for all $1\le i\le m$ and the result follows.
\end{proof}

The following result shows that, in a certain sense, directed colimits preserve monomorphisms.

\begin{lemma}\label{directlimit-monomorphism-lemma}
Let $S$ be a monoid, let $(X_i,\phi_{i,j})$ be a direct system of $S-$acts with directed index set and let $(X,\alpha_i)$ be the directed colimit. Suppose that $Y$ is an $S-$act and that $\beta_i:X_i\to Y$ are monomorphisms such that $\beta_i = \beta_j\phi_{i,j}$ for all $i\le j$. Then there exists a unique monomorphism $h:X\to Y$ such that $h\alpha_i=\beta_i$ for all $i$.
\end{lemma}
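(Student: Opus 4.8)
The plan is to obtain $h$ immediately from the universal property of the colimit and then to verify injectivity by hand, the only real work being a reduction of any two elements of $X$ to a common index.

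First I would invoke the universal property of the directed colimit $(X,\alpha_i)$. The hypotheses say precisely that the $\beta_i$ form a compatible cocone over the direct system, that is $\beta_i=\beta_j\phi_{i,j}$ for all $i\le j$, so there is a unique $S$-map $h:X\to Y$ with $h\alpha_i=\beta_i$ for all $i$. This settles existence and uniqueness simultaneously, and leaves only the claim that $h$ is a monomorphism.

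For injectivity I would take $x,x'\in X$ with $h(x)=h(x')$. Since every element of $X$ is of the form $\alpha_i(x_i)$, I may write $x=\alpha_i(x_i)$ and $x'=\alpha_j(x_j)$. The key move is then to pass to a common index: choosing $k\ge i,j$, which is possible because $I$ is directed, and using $\alpha_i=\alpha_k\phi_{i,k}$ and $\alpha_j=\alpha_k\phi_{j,k}$, I can rewrite $x=\alpha_k(a)$ and $x'=\alpha_k(b)$ with $a=\phi_{i,k}(x_i)$ and $b=\phi_{j,k}(x_j)$ both in $X_k$. Applying $h$ together with $h\alpha_k=\beta_k$ turns the assumption $h(x)=h(x')$ into $\beta_k(a)=\beta_k(b)$; since $\beta_k$ is a monomorphism this forces $a=b$, and hence $x=\alpha_k(a)=\alpha_k(b)=x'$.

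The only step that needs any thought is this reduction to a common $X_k$, which is exactly where directedness of $I$ is used and what makes a single injective $\beta_k$ available. Note that I do not expect to need the full strength of Lemma~\ref{direct-limit-lemma} here, since collapsing both elements to one index already suffices; everything else is formal.
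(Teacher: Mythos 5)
Your proof is correct and follows essentially the same route as the paper: obtain $h$ from the universal property, pass both elements to a common index $k$ via directedness, and use injectivity of $\beta_k$ together with $h\alpha_k=\beta_k$ to conclude $x=x'$. The only difference is cosmetic (you rewrite $x,x'$ as $\alpha_k(a),\alpha_k(b)$ before applying $h$, while the paper applies $h$ first and then identifies $\phi_{i,k}(x_i)=\phi_{j,k}(x_j)$), and you are right that only the easy, compatibility direction of the colimit description is needed.
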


\begin{proof}
Consider the following commutative diagram
$$
\begin{tikzpicture}[description/.style={fill=white,inner sep=2pt}]
\matrix (m) [matrix of math nodes, row sep=3em,
column sep=2.5em, text height=1.5ex, text depth=0.25ex]
{X_i& &X_j \\
 & X &\\ 
& Y & \\};
\path[->,font=\scriptsize]
(m-1-1) edge node[auto, right] {$\alpha_i$} (m-2-2)
(m-1-3) edge node[auto, left] {$\alpha_j$} (m-2-2)
(m-1-1) edge node[auto, left] {$\beta_i$} (m-3-2)
(m-1-3) edge node[auto, right] {$\beta_j$} (m-3-2)
(m-2-2) edge node[auto, above left] {$h$} (m-3-2)
(m-1-1) edge[->] node[auto,above] {$\phi_{i,j}$} (m-1-3);
\end{tikzpicture}
$$
where $h$ is the unique map guaranteed by the directed colimit property. Suppose that $h(x)=h(x')$. Then there exists $i,j$ and $x_i\in X_i, x_j\in X_j$ such that $x=\alpha_i(x_i)$ and $x'=\alpha_j(x_j)$. Hence there exists $k\ge i,j$ and so
$$
\beta_k\phi_{i,k}(x_i) = h\alpha_k\phi_{i,k}(x_i)=h\alpha_i(x_i)=h\alpha_j(x_j) = h\alpha_k\phi_{j,k}(x_j)=\beta_k\phi_{j,k}(x_j).
$$
Since $\beta_k$ is a monomorphism then $\phi_{i,k}(x_i)=\phi_{j,k}(x_j)$ and so $x=x'$ as required.
\end{proof}

\begin{lemma}\label{direct-limit-quotient-lemma}
Let $S$ be a monoid, let $X$ be an $S-$act and let $\{\rho_i : i \in I\}$ be a set of congruences on $X$, partially ordered by inclusion, with the property that the index set is directed and has a minimum element 0. Let $\phi_{i,j}:X/\rho_i \rightarrow X/\rho_j$ be the $S-$map defined by $a\rho_i \mapsto a\rho_j$ whenever $\rho_i \subseteq \rho_j$, so that $(X/\rho_i,\phi_{i,j})$ is a direct system. Let $\rho=\bigcup_{i \in I}\rho_i$. Then $X/\rho$ is the directed colimit of $(X/\rho_i,\phi_{i,j})$.
\end{lemma}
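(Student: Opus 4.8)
The plan is to exhibit explicit structure maps $\alpha_i:X/\rho_i\to X/\rho$ and then verify that the pair $(X/\rho,\alpha_i)$ satisfies the two characterising properties of a directed colimit established in the theorem above, rather than appealing directly to the universal property. The natural candidate is $\alpha_i(a\rho_i)=a\rho$. Before anything else I would record that $\rho=\bigcup_{i\in I}\rho_i$ really is a congruence: reflexivity and $S$-compatibility are inherited from any single $\rho_i$, while transitivity is where directedness first enters --- given $(a,b)\in\rho_i$ and $(b,c)\in\rho_j$, I choose $k\ge i,j$ so that both pairs lie in $\rho_k$, whence $(a,c)\in\rho_k\subseteq\rho$. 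The inclusion $\rho_i\subseteq\rho$ then makes each $\alpha_i$ well defined, and for $\rho_i\subseteq\rho_j$ the computation $\alpha_j\phi_{i,j}(a\rho_i)=\alpha_j(a\rho_j)=a\rho=\alpha_i(a\rho_i)$ shows the required triangles commute.

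There remain the two colimit conditions. Surjectivity is immediate: every element of $X/\rho$ has the form $a\rho$, and using the minimum element $0\in I$ we have $a\rho=\alpha_0(a\rho_0)$, so condition (1) holds (in fact any index would serve). The substance of the argument is condition (2), the identification of elements: I must show that $\alpha_i(a\rho_i)=\alpha_j(b\rho_j)$ if and only if $\phi_{i,k}(a\rho_i)=\phi_{j,k}(b\rho_j)$ for some $k\ge i,j$. The backward direction is routine, since $a\rho_k=b\rho_k$ forces $(a,b)\in\rho_k\subseteq\rho$ and hence $a\rho=b\rho$. For the forward direction, $\alpha_i(a\rho_i)=\alpha_j(b\rho_j)$ means $(a,b)\in\rho$, so $(a,b)\in\rho_l$ for some $l$; invoking directedness again I pick $k\ge i,j,l$, which gives $a\rho_k=b\rho_k$ and therefore $\phi_{i,k}(a\rho_i)=a\rho_k=b\rho_k=\phi_{j,k}(b\rho_j)$.

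The only genuine obstacle is this forward implication, and it is entirely an exercise in using directedness twice --- once to know $\rho$ is transitive, so that the union description of $\rho$ is legitimate, and once to absorb the index $l$ witnessing $(a,b)\in\rho$ into a single index $k$ dominating both $i$ and $j$. Everything else is bookkeeping, and the conclusion that $X/\rho$ is the directed colimit follows from the characterisation theorem.
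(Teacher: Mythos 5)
Your proof is correct, but it takes a genuinely different route from the paper's. The paper verifies the universal property directly: it defines $\alpha_i(a\rho_i)=a\rho$, then, given a competing cone $\beta_i:X/\rho_i\to Q$, constructs the mediating map by $\psi(a\rho)=\beta_0(a\rho_0)$ (this is where the minimum element $0$ earns its keep, giving a canonical representative index), checks well-definedness by absorbing a witness $(a,a')\in\rho_k$ via the compatibility $\beta_0=\beta_k\phi_{0,k}$, and finishes with the uniqueness of $\psi$. You instead invoke the element-wise characterisation of directed colimits stated just before this lemma (surjectivity of the union of the images of the $\alpha_i$, plus the identification condition $\alpha_i(x_i)=\alpha_j(x_j)\iff\phi_{i,k}(x_i)=\phi_{j,k}(x_j)$ for some $k\ge i,j$), which the paper explicitly licenses by saying those two properties will be used without further reference. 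Your route is shorter, avoids constructing and checking a mediating map altogether, and incidentally shows the minimum-element hypothesis is inessential --- you use it only as a convenient choice of index for surjectivity, whereas any index would do, and directedness alone carries the argument. What the paper's approach buys in exchange is self-containedness: it does not lean on the characterisation theorem, which the paper states but never proves. Two small remarks on your write-up: when checking that $\rho$ is a congruence you should also note symmetry (trivial, since a union of symmetric relations is symmetric), and your choice of $k\ge i,j,l$ uses directedness twice (an upper bound of $i,j$, then an upper bound of that and $l$), which is fine but worth saying since directedness is stated only for pairs.
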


\begin{proof}
First note that $\rho$ is transitive since $I$ is directed. Clearly we can define $S-$maps $\alpha_i:X/\rho_i \rightarrow X/\rho$, $a\rho_i \mapsto a\rho$ such that $\alpha_i=\alpha_j \phi_{i,j}$ for all $i \leq j$. Now suppose there exists an $S-$act $Q$ and $S-$maps $\beta_i:X/\rho_i \rightarrow Q$ such that $\beta_i=\beta_j\phi_{i,j}$ for all $i \leq j$. Define $\psi:X/\rho \rightarrow Q$ by $\psi(a\rho)=\beta_0(a\rho_0)$. To see this is well-defined, let $a\rho=a'\rho$ in $X/\rho$, that is, $(a,a') \in \rho$ so there must exist some $k \in I$ such that $(a,a') \in \rho_k$ and we get
$$
\beta_0(a\rho_0)=\beta_k\phi_{0,k}(a\rho_0)=\beta_k(a\rho_k)=\beta_k(a'\rho_k)=\beta_k\phi_{0,k}(a'\rho_0)=\beta_0(a'\rho_0)
$$
so $\psi(a\rho)=\psi(a'\rho)$ and $\psi$ is well-defined. It is easy to see that $\psi$ is also an $S-$map. Because $0$ is the minimum element, we have that $\beta_0(a\rho_0)=\beta_0\phi_{i,0}(a\rho_i)=\beta_i(a\rho_i)$ and so $\psi\alpha_i=\beta_i$ for all $i \in I$. Finally let $\psi':X/\rho \rightarrow Q$ be an $S-$map such that $\psi'\alpha_i=\beta_i$ for all $i \in I$, then $\psi'(a\rho)=\psi'(\alpha_0(a\rho_0))=\beta_0(a\rho_0)=\psi(a\rho)$, and we are done.
\end{proof}

\begin{remark}
In particular, this holds when we have a chain of congruences $\rho_1\subset\rho_2\subset\ldots$ and $\rho=\bigcup_{i \geq 1}\rho_i$.
\end{remark}

\begin{example} \label{min-gp-cong}
\rm If $S$ is an inverse monoid, which we consider as a right $S-$act, then for any $e\le f \in E(S)$ it follows that $\ker \lambda_f \subseteq \ker \lambda_e$, where $\lambda_e(s)=es$. Hence there is a set of right congruences on $S$ partially ordered by inclusion, where the identity relation $\ker \lambda_1$ is a least element in the ordering. We can now construct a direct system of $S-$acts $S/\ker \lambda_f \rightarrow S/\ker \lambda_e$, $s\ker \lambda_f \mapsto s\ker \lambda_e$ whose directed colimit, by the previous lemma, is $S/\sigma$ where $\sigma=\bigcup_{e \in E(S)}\ker \lambda_e$, which is easily seen to be the minimum group congruence on $S$ (see~\cite[page 159]{howie-95}).
\end{example}

\begin{proposition}[{\cite[Proposition 5.2]{stenstrom-71}}]\label{direct-limit-sf-proposition}
Let $S$ be a monoid. Every directed colimit of a direct system of strongly flat acts is strongly flat.
\end{proposition}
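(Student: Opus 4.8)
The plan is to invoke the characterization of strongly flat acts as precisely those $S$-acts satisfying both interpolation conditions $(P)$ and $(E)$, and then to verify each of these two conditions directly for the colimit. The essential tool is Lemma~\ref{directlimit-lemma}, which lets us pull any finite collection of relations holding among elements of the colimit back into a single term $X_l$ of the direct system, where strong flatness is available.

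First I would verify condition $(P)$. Let $(X,\alpha_i)$ be the directed colimit of a direct system $(X_i,\phi_{i,j})$ of strongly flat acts, and suppose $xu=x'u'$ in $X$ with $x,x'\in X$ and $u,u'\in S$. Applying Lemma~\ref{directlimit-lemma} to the two elements $x,x'$ and the single relation $xu=x'u'$ yields an index $l\in I$ and elements $a,a'\in X_l$ with $\alpha_l(a)=x$, $\alpha_l(a')=x'$ and $au=a'u'$ holding already in $X_l$. Since $X_l$ is strongly flat it satisfies $(P)$, so there exist $a''\in X_l$ and $s,s'\in S$ with $a=a''s$, $a'=a''s'$ and $su=s'u'$. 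Setting $x''=\alpha_l(a'')$ and applying the $S$-map $\alpha_l$ to the equalities $a=a''s$ and $a'=a''s'$ gives $x=x''s$ and $x'=x''s'$, while $su=s'u'$ is already a relation in $S$; hence $X$ satisfies $(P)$.

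Condition $(E)$ is handled identically, with one fewer element. Given $xu=xu'$ in $X$, Lemma~\ref{directlimit-lemma} applied to the single element $x$ and the relation $xu=xu'$ produces $l\in I$ and $a\in X_l$ with $\alpha_l(a)=x$ and $au=au'$ in $X_l$. Strong flatness of $X_l$ then supplies $a''\in X_l$ and $s\in S$ with $a=a''s$ and $su=su'$; pushing forward along $\alpha_l$ yields $x=x''s$ with $x''=\alpha_l(a'')$, together with $su=su'$, so $(E)$ holds for $X$. As $X$ satisfies both $(P)$ and $(E)$, it is strongly flat.

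I do not expect a serious obstacle: the whole argument is the reduction of the finitely many defining relations of $(P)$ and $(E)$ to a single act $X_l$ of the system, which is exactly what Lemma~\ref{directlimit-lemma} provides. The only point needing care is that the witnesses constructed inside $X_l$ push forward correctly, that is, that $\alpha_l$ carries each equality $a=a''s$ to the corresponding equality $x=x''s$ in $X$; this is immediate because every $\alpha_l$ is an $S$-map and so commutes with the right action.
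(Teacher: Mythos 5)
Your proof is correct, but note that the paper itself gives no proof of this proposition: it is quoted directly from Stenstr\"om, so there is no internal argument to compare against line by line. What your proof does match, almost exactly, is the paper's own proof of the neighbouring Proposition~\ref{direct-limit-p-proposition} (closure of condition $(P)$ under directed colimits): there the authors pull the single relation $xs=yt$ back to some $X_k$ using Lemma~\ref{direct-limit-lemma}, apply condition $(P)$ inside $X_k$, and push the witnesses forward along the $S-$map $\alpha_k$. You run the same argument, except that you invoke the finite-family Lemma~\ref{directlimit-lemma} (which, as you implicitly use in the condition $(E)$ case, places no restriction $j_i\ne k_i$, so a relation $xu=xu'$ on a single element is permitted) and then add the entirely parallel condition $(E)$ step, concluding via the characterization, stated in the paper's introduction, of strongly flat acts as those satisfying both $(P)$ and $(E)$. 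The net effect is that your argument makes the paper self-contained at this point, at the price of taking the $(P)$-plus-$(E)$ characterization of strong flatness as given; the paper's choice to cite Stenstr\"om keeps the statement independent of that characterization but leaves the closure property as a black box.
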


The following is probably well-known.

\begin{proposition}\label{direct-limit-p-proposition}
Let $S$ be a monoid. Every directed colimit of a direct system of acts that satisfy condition $(P)$, satisfies condition $(P)$.
\end{proposition}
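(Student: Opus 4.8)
The plan is to reduce a condition $(P)$ relation holding in the colimit to one that already holds in a single term $X_l$ of the direct system, exploit condition $(P)$ there, and then transport the resulting factorisation forward along the structure map $\alpha_l$. Let $(X,\alpha_i)$ be the directed colimit of a direct system $(X_i,\phi_{i,j})$ in which every $X_i$ satisfies condition $(P)$, and suppose $xu = x'u'$ in $X$ with $x,x'\in X$ and $u,u'\in S$.

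First I would apply Lemma~\ref{directlimit-lemma} with $n=2$ and $m=1$ to the elements $y_1=x$, $y_2=x'$ and the single relation $y_1u = y_2u'$ (so $j_1=1$, $k_1=2$, $s_1=u$, $t_1=u'$). This produces an index $l\in I$ together with elements $x_1,x_2\in X_l$ satisfying $\alpha_l(x_1)=x$, $\alpha_l(x_2)=x'$, and crucially $x_1u = x_2u'$ already holding inside $X_l$.

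Next, because $X_l$ satisfies condition $(P)$, the relation $x_1u = x_2u'$ yields some $z\in X_l$ and $s,s'\in S$ with $x_1 = zs$, $x_2 = zs'$ and $su = s'u'$. Putting $w = \alpha_l(z)\in X$ and using that $\alpha_l$ is an $S$-map, I would obtain $x = \alpha_l(x_1) = \alpha_l(zs) = \alpha_l(z)s = ws$ and likewise $x' = ws'$, while $su = s'u'$ is already an identity in $S$. Thus $w,s,s'$ witness condition $(P)$ for the relation $xu=x'u'$, and so $X$ satisfies condition $(P)$.

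There is essentially no hard step here: the entire argument rests on the observation that Lemma~\ref{directlimit-lemma} lets one realise finitely many relations of the colimit at a single stage $X_l$, after which condition $(P)$ becomes a purely local, one-stage phenomenon that pushes forward through $\alpha_l$. The only points requiring any care are the bookkeeping in the application of Lemma~\ref{directlimit-lemma} — correctly matching the abstract relations $y_{j_i}s_i = y_{k_i}t_i$ to the single concrete relation $xu=x'u'$ — and verifying that the factorisation equalities $x=ws$, $x'=ws'$, and not merely the equation $su=s'u'$, survive the passage through $\alpha_l$; both are immediate once one recalls that $\alpha_l$ respects the $S$-action.
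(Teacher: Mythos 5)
Your proof is correct and follows essentially the same route as the paper's: lift the relation $xu=x'u'$ to a single stage of the direct system, apply condition $(P)$ there, and push the factorisation forward along the canonical map. The only cosmetic difference is that you invoke Lemma~\ref{directlimit-lemma} (the packaged relation-lifting lemma) in one step, whereas the paper lifts preimages and then applies the equality criterion of Lemma~\ref{direct-limit-lemma} to find the common stage; the two are interchangeable here since the former is proved from the latter.
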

\begin{proof}
Let $(X_i,\phi_{i,j})$ be a direct system of $S-$acts and $S-$morphisms with a directed index set and with directed colimit $(X,\alpha_i)$. Suppose that $xs=yt$ in $X$ so that there exists $x_i\in X_i, x_j\in X_j$ with $x=\alpha_i(x_i), y=\alpha_j(x_j)$. Then by Lemma~\ref{direct-limit-lemma} there exists $k\ge i,j$ with $\phi_{i,k}(x_i)s=\phi_{j,k}(x_j)t$ in $X_k$. Consequently there exists $z\in X_k, u,v\in S$ with $\phi_{i,k}(x_i)=zu, \phi_{j,k}(x_j)=zv$ and $us=vt$. But then $x=\alpha_i(x_i) = \alpha_k\phi_{i,k}(x_i) = \alpha_k(z)u$. In a similar way $y=\alpha_k(z)v$ and the result follows.
\end{proof}

The situation for projective acts is slightly different.

\begin{proposition}[{\cite{fountain-76}}]
Let $S$ be a monoid. Every directed colimit of a direct system of projective acts is projective if and only if $S$ is perfect.
\end{proposition}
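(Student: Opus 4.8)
The plan is to reduce the assertion to the act-theoretic analogue of Bass's theorem, using the two colimit facts already assembled. The crucial observation is that the class of \emph{directed colimits of projective $S$-acts} coincides exactly with the class $\SF$ of strongly flat acts. Indeed, any directed colimit of projective acts is strongly flat: projective acts are strongly flat since $\Pr\subseteq\SF$, and by Proposition~\ref{direct-limit-sf-proposition} a directed colimit of strongly flat acts is again strongly flat. Conversely, every strongly flat act is, by Stenstr\"om's characterisation recalled in the introduction, a directed colimit of finitely generated free acts, and finitely generated free acts are projective; so every strongly flat act is itself a directed colimit of projective acts. Hence the two classes are equal.

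Granting this coincidence, I would show that the hypothesis `every directed colimit of projective acts is projective' is equivalent to the equality $\SF=\Pr$. If every directed colimit of projectives is projective, then since each strongly flat act is such a colimit we get $\SF\subseteq\Pr$, and as $\Pr\subseteq\SF$ always holds, $\SF=\Pr$. Conversely, if $\SF=\Pr$, then any directed colimit of projective acts, being strongly flat, is projective. Thus the whole proposition reduces to proving that $\SF=\Pr$ holds if and only if $S$ is perfect.

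This last equivalence is where I expect the real difficulty to lie, and it is precisely Fountain's characterisation of perfect monoids \cite{fountain-76}, the analogue of the classical result that a ring is left perfect exactly when every flat module is projective. The direction $S$ perfect $\Rightarrow\SF=\Pr$ would be approached by taking a strongly flat act $A$ together with its projective cover $f:C\to A$ and arguing that $f$ must split, so that $A$ is a retract of the projective act $C$ and hence projective; making this splitting argument work from the coessential (no proper subact mapping onto) property is the delicate point. The reverse direction $\SF=\Pr\Rightarrow S$ perfect requires the genuine construction of projective covers for arbitrary $S$-acts, and this structural input is what I would import wholesale from \cite{fountain-76}.
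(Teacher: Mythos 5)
You should first be aware that the paper contains no proof of this proposition: it is stated as a quotation of Fountain's theorem from \cite{fountain-76}, so there is no internal argument to compare against. Your proposal is nevertheless correct, and it supplies exactly the bridge that makes that citation legitimate. The identification of the class of directed colimits of projective acts with $\SF$ is sound: one inclusion follows from $\Pr\subseteq\SF$ together with Proposition~\ref{direct-limit-sf-proposition}, and the other from Stenstr\"om's characterisation (recalled in the paper's introduction) of strongly flat acts as precisely the directed colimits of finitely generated free acts. This correctly reduces the proposition to the equivalence ``$S$ is perfect if and only if $\SF=\Pr$'', which is the actual content of Fountain's theorem and is the form in which the paper itself later invokes it (``$S$ is right perfect if and only if all strongly flat $S-$acts are projective''). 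Since you import that equivalence wholesale from \cite{fountain-76} --- exactly as the paper does for the whole proposition --- your proof is complete modulo the same external input. Your sketch of the direction ``perfect $\Rightarrow\SF\subseteq\Pr$'' via splitting a projective cover is only an outline (the natural tool there would be Theorem~\ref{stenstrom-theorem}, purity of every epimorphism onto a strongly flat act, combined with coessentiality, and making that work is indeed delicate; Fountain's own route goes instead through condition $(A)$ and chain conditions), but since you explicitly defer to \cite{fountain-76} at that point, this is not a gap relative to the paper's treatment.
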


\section{Purity and epimorphisms}
Let $\psi:X\to Y$ be an $S-$epimorphism. We say that $\psi$ is {\em a pure epimorphism} if for every finitely presented $S-$act $M$ and every $S-$map $f:M\to Y$ there exists $g:M\to X$ such that
$$
\begin{tikzpicture}[description/.style={fill=white,inner sep=2pt}]
\matrix (m) [matrix of math nodes, row sep=3em,
column sep=2.5em, text height=1.5ex, text depth=0.25ex]
{X & Y \\
 & M\\ };
\path[->,font=\scriptsize]
(m-2-2) edge node[auto, left] {$g$} (m-1-1)
(m-2-2) edge node[auto,right] {$f$} (m-1-2)
(m-1-1) edge[->] node[auto,above] {$\psi$} (m-1-2);
\end{tikzpicture}
$$
commutes.
\begin{theorem}[{\cite[Proposition 4.3]{stenstrom-71}}]\label{pure-theorem}
Let $S$ be a monoid and let $\psi:X\to Y$ be an $S-$epimorphism. Then the following are equivalent:

\begin{enumerate}
\item $\psi$ is pure;
\item for every family $y_1,\ldots,y_n\in Y$ and relations
$$
y_{j_i}s_i=y_{k_i}t_i \quad (1\le i\le m)
$$
there exists $x_1,\ldots,x_n\in X$ such that $\psi(x_r)=y_r$ for $1\le r\le n$ and
$$
x_{j_i}s_i=x_{k_i}t_i \hbox{ for all }1\le i\le m.
$$
\end{enumerate}
\end{theorem}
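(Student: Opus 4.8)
The plan is to exploit the fact that a finitely presented act $M=F/\rho$ records exactly the combinatorial data appearing in condition (2), and to shuttle between the two formulations using the universal property of the congruence quotient. The key structural observation is that if $F$ is the free $S$-act on generators $z_1,\dots,z_n$, then every element of $F$ is uniquely of the form $z_r s$, so a finite generating set for a congruence $\rho$ on $F$ consists of finitely many pairs $(z_{j_i}s_i,z_{k_i}t_i)$ with $1\le i\le m$. Thus specifying a finitely presented act together with a chosen tuple of generators is the same thing as specifying a finite family of elements subject to finitely many relations of the shape in (2). Both implications are then essentially bookkeeping on top of this dictionary.

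For $(1)\Rightarrow(2)$, I would start from the given family $y_1,\dots,y_n\in Y$ and relations $y_{j_i}s_i=y_{k_i}t_i$. Let $F$ be free on $z_1,\dots,z_n$, let $\rho$ be the (finitely generated) congruence generated by the pairs $(z_{j_i}s_i,z_{k_i}t_i)$, and set $M=F/\rho$, which is finitely presented. The assignment $z_r\mapsto y_r$ extends to an $S$-map $F\to Y$, and since it sends each generating pair of $\rho$ to an equal pair in $Y$ (precisely because $y_{j_i}s_i=y_{k_i}t_i$), it factors through $\rho$ to give a well-defined $S$-map $f:M\to Y$ with $f(z_r\rho)=y_r$. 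Applying purity to $f$ yields $g:M\to X$ with $\psi g=f$; setting $x_r=g(z_r\rho)$ gives $\psi(x_r)=y_r$, and applying $g$ to the identity $z_{j_i}s_i\,\rho=z_{k_i}t_i\,\rho$ in $M$ produces $x_{j_i}s_i=x_{k_i}t_i$, as required.

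For $(2)\Rightarrow(1)$, I would run the same correspondence in reverse. Given a finitely presented $M=F/\rho$ with $F$ free on $z_1,\dots,z_n$ and $\rho$ generated by $(z_{j_i}s_i,z_{k_i}t_i)$, and given $f:M\to Y$, set $y_r=f(z_r\rho)$. Applying $f$ to the defining relations of $M$ gives $y_{j_i}s_i=y_{k_i}t_i$ in $Y$, so condition (2) supplies $x_1,\dots,x_n\in X$ with $\psi(x_r)=y_r$ and $x_{j_i}s_i=x_{k_i}t_i$. The map $z_r\mapsto x_r$ extends to $\bar g:F\to X$, and because $\bar g(z_{j_i}s_i)=x_{j_i}s_i=x_{k_i}t_i=\bar g(z_{k_i}t_i)$ it respects the generators of $\rho$ and hence factors to $g:M\to X$. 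Finally $\psi g$ and $f$ agree on the generators $z_r\rho$ and are both $S$-maps, so $\psi g=f$, witnessing purity.

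The routine calculations are trivial; the one point that genuinely requires care — and which I regard as the main obstacle — is the well-definedness of the factored maps $f$ and $g$. This rests on the precise claim that an $S$-map out of $F/\rho$ corresponds to an $S$-map out of $F$ that is constant on $\rho$-classes, and that it suffices to check agreement on a generating set of $\rho$ rather than on all of $\rho$; verifying that the chosen pairs really do generate the relevant congruence, and that no hidden relations are imposed, is the heart of the argument. Everything else follows formally from the universal property of the quotient and the identification of finitely presented acts with finite generators-and-relations data.
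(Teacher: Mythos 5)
Your proof is correct. Note that the paper itself gives no proof of this statement: it is quoted as Proposition 4.3 of Stenstr\"om's 1971 paper, so there is nothing internal to compare against. Your argument --- the dictionary between finitely presented acts $F/\rho$ (with $F$ free on $z_1,\dots,z_n$ and $\rho$ generated by finitely many pairs, necessarily of the form $(z_{j_i}s_i,z_{k_i}t_i)$ by uniqueness of the representation $z_rs$ in a free act) and finite systems of elements and relations --- is the standard one, and both directions are carried out soundly: factoring through the quotient is justified because the kernel congruence of an $S$-map identifying the generating pairs must contain the congruence they generate, and $\psi g=f$ follows since both are $S$-maps agreeing on the generators $z_r\rho$ of $M$.
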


\begin{example}\label{min-gc-pure-example}
\rm Let $S$ be an inverse monoid and $\sigma$ the minimum group congruence on $S$ as in Example~\ref{min-gp-cong}. Then the right $S-$map $S\to S/\sigma$ is a pure $S-$epimorphism. To see this let $y_1=x_1\sigma,\ldots,y_n=x_n\sigma\in S/\sigma$ and suppose we have relations
$$
y_{j_i}s_i=y_{k_i}t_i \quad (1\le i\le m).
$$
Then for $1\le i\le m$ we have $(x_{j_i}s_i,x_{k_i}t_i)\in\sigma$ and so there exist $e_i\in E(S), (1\le i\le m)$ such that
$e_ix_{j_i}s_i=e_ix_{k_i}t_i$. Now let $e=e_1\ldots e_m$ and note that for $1\le i\le m, ex_{j_i}s_i = ex_{k_i}t_i$ and for $1\le l\le n, \sigma^\natural(ex_l) = (ex_l)\sigma = x_l\sigma=y_l$ as required.

\end{example}

\medskip

It is clear that if the epimorphism $\psi$ splits with splitting monomorphism $\phi:Y\to X$ then $\phi f:M\to X$ is such that $\psi\phi f = f$ and so $\psi$ is pure. The converse is not in general true. For example, let $S=\n$ with multiplication given by
$$
n.m=\max\{m,n\}\text{ for all }m,n\in S.
$$
Let $\Theta_S=\{\theta\}$ be the 1-element right $S-$act and note that $S\to \Theta_S$ is a pure epimorphism by Theorem~\ref{pure-theorem}. However, as $S$ does not contain a fixed point then it does not split.

\medskip

From Lemma~\ref{directlimit-lemma} we can immediately deduce that
\begin{corollary}
Let $S$ be a monoid, let $(X_i,\phi_{i,j})$ be a direct system of $S-$acts with directed index set $I$ and directed colimit $(X,\alpha_i)$. Then the natural map $\dot\bigcup X_i\to X$ is a pure epimorphism.
\end{corollary}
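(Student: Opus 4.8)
The plan is to verify condition (2) of Theorem~\ref{pure-theorem} directly, since Lemma~\ref{directlimit-lemma} already does almost all of the work. First I would pin down the natural map $\psi:\dot\bigcup X_i\to X$: from the explicit construction of the colimit it is the canonical surjection onto $\left(\dot\bigcup X_i\right)/\rho = X$, so that $\psi(\lambda_i(x_i)) = \alpha_i(x_i)$, where $\lambda_i:X_i\to\dot\bigcup X_i$ is the coproduct inclusion. In particular $\psi$ is onto, hence an epimorphism, which is the first thing the definition requires.

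Next, to check the lifting criterion, I would take an arbitrary family $y_1,\dots,y_n\in X$ together with relations $y_{j_i}s_i = y_{k_i}t_i$ for $1\le i\le m$. Lemma~\ref{directlimit-lemma} then supplies a single index $l\in I$ and elements $x_1,\dots,x_n\in X_l$ with $\alpha_l(x_r)=y_r$ and $x_{j_i}s_i = x_{k_i}t_i$ holding already in $X_l$. The candidate preimages in the coproduct are $\lambda_l(x_1),\dots,\lambda_l(x_n)$.

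Finally I would confirm the two requirements of condition (2). Since $\psi\lambda_l = \alpha_l$, one has $\psi(\lambda_l(x_r)) = \alpha_l(x_r) = y_r$, so the chosen elements map onto the $y_r$. For the relations, using that $\lambda_l$ is an $S$-map, $\lambda_l(x_{j_i})s_i = \lambda_l(x_{j_i}s_i) = \lambda_l(x_{k_i}t_i) = \lambda_l(x_{k_i})t_i$ holds in $\dot\bigcup X_i$. Thus condition (2) of Theorem~\ref{pure-theorem} is met and $\psi$ is pure.

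There is essentially no hard step here; the whole point—and the only place any care is needed—is that Lemma~\ref{directlimit-lemma} lifts the entire finite system of relations into one common stage $X_l$, rather than merely into the coproduct. This is exactly what allows the relations to survive when transported into $\dot\bigcup X_i$ through the single inclusion $\lambda_l$. Had the lifts of the various $y_r$ been distributed among different components $X_{m(r)}$, the relations would in general fail in the coproduct, so the substance of the argument is precisely the directedness of $I$, which yields the common $l$.
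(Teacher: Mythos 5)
Your proof is correct and is precisely the argument the paper intends: the corollary is stated as an immediate consequence of Lemma~\ref{directlimit-lemma}, and your verification of condition (2) of Theorem~\ref{pure-theorem} via the common stage $X_l$ and the inclusion $\lambda_l$ spells out exactly that deduction.
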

Suppose that $(X_i,\phi_{i,j})$ and $(Y_i,\theta_{i,j})$ are direct systems of $S-$acts and $S-$maps and suppose that for each $i\in I$ there exists an $S-$map $\psi:X_i\to Y_i$  and suppose $(X,\beta_i)$ and $(Y,\alpha_i)$, the directed colimits of these systems are such that
$$
\begin{tikzpicture}[description/.style={fill=white,inner sep=2pt}]
\matrix (m) [matrix of math nodes, row sep=3em,
column sep=2.5em, text height=1.5ex, text depth=0.25ex]
{X_i & Y_i \\
 X & Y\\ };
\path[->,font=\scriptsize]
(m-1-1) edge node[auto, left] {$\beta_i$} (m-2-1)
(m-1-2) edge node[auto,right] {$\alpha_i$} (m-2-2)
(m-2-1) edge[->] node[auto,below] {$\psi$} (m-2-2)
(m-1-1) edge[->] node[auto,above] {$\psi_i$} (m-1-2);
\end{tikzpicture}
\hskip2em
\begin{tikzpicture}[description/.style={fill=white,inner sep=2pt}]
\matrix (m) [matrix of math nodes, row sep=3em,
column sep=2.5em, text height=1.5ex, text depth=0.25ex]
{X_i & X_j \\
 Y_i & Y_j\\ };
\path[->,font=\scriptsize]
(m-1-1) edge node[auto, left] {$\psi_i$} (m-2-1)
(m-1-2) edge node[auto,right] {$\psi_j$} (m-2-2)
(m-2-1) edge[->] node[auto,below] {$\theta_{i,j}$} (m-2-2)
(m-1-1) edge[->] node[auto,above] {$\phi_{i,j}$} (m-1-2);
\end{tikzpicture}
$$
commute for all $i\le j\in I$. Then we shall refer to $\psi$ as the {\em directed colimit of the $\psi_i$}.
It is shown in \cite{renshaw-85} that directed colimits of (monomorphisms) epimorphisms are (monomorphisms) epimorphisms.

\begin{corollary}\label{pure-epimorphism-corollary}
Let $S$ be a monoid. Directed colimits of pure $S-$epimorphisms are pure.
\end{corollary}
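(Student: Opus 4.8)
The plan is to verify condition (2) of Theorem~\ref{pure-theorem} for the colimit map $\psi:X\to Y$, having first noted that $\psi$ is an epimorphism (by the cited result of~\cite{renshaw-85} that directed colimits of epimorphisms are epimorphisms, which is precisely what lets us even speak of $\psi$ as a \emph{pure} epimorphism). So I would begin by fixing an arbitrary family $y_1,\ldots,y_n\in Y$ together with relations $y_{j_i}s_i=y_{k_i}t_i$ for $1\le i\le m$, and aim to produce preimages in $X$ satisfying the same relations.

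The crucial first step is to push the whole configuration down to a single stage of the target system $(Y_i,\theta_{i,j})$. Applying Lemma~\ref{directlimit-lemma} to this system with colimit $(Y,\alpha_i)$, I obtain an index $l\in I$ and elements $w_1,\ldots,w_n\in Y_l$ with $\alpha_l(w_r)=y_r$ for every $r$ and with $w_{j_i}s_i=w_{k_i}t_i$ holding in $Y_l$ for every $i$. This is exactly what Lemma~\ref{directlimit-lemma} is built to provide: it converts relations that hold only in the colimit $Y$ into genuine relations at one fixed level $Y_l$, and it supplies a \emph{single} common index $l$ that works simultaneously for all the $y_r$ and all $m$ relations.

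Next I would invoke the purity of the component map $\psi_l:X_l\to Y_l$. Since the elements $w_1,\ldots,w_n\in Y_l$ satisfy $w_{j_i}s_i=w_{k_i}t_i$, the implication (1)$\Rightarrow$(2) of Theorem~\ref{pure-theorem}, applied to $\psi_l$, yields $u_1,\ldots,u_n\in X_l$ with $\psi_l(u_r)=w_r$ and $u_{j_i}s_i=u_{k_i}t_i$ in $X_l$. Finally I would transport these liftings up to $X$ by setting $x_r=\beta_l(u_r)$. Using the defining compatibility $\psi\beta_l=\alpha_l\psi_l$ gives $\psi(x_r)=\alpha_l\psi_l(u_r)=\alpha_l(w_r)=y_r$, while applying the $S$-map $\beta_l$ to each equation $u_{j_i}s_i=u_{k_i}t_i$ gives $x_{j_i}s_i=x_{k_i}t_i$. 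Condition (2) of Theorem~\ref{pure-theorem} is thus satisfied and $\psi$ is pure.

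There is no serious obstacle here once the reduction is in place; the argument is essentially a two-step ``descend then lift'' followed by reassembly. The one point requiring care is that Lemma~\ref{directlimit-lemma} must furnish one common stage $l$ serving all elements and all relations at once, so that the purity of the single map $\psi_l$ closes the argument. If one instead lifted different elements or relations at different stages, the final transport via $\beta_l$ would not be available and the proof would not go through, so the force of the argument rests on that simultaneity.
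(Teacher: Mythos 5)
Your proposal is correct and follows essentially the same route as the paper's proof: descend the elements and relations to a single stage $Y_l$ via Lemma~\ref{directlimit-lemma}, lift through the pure component map $\psi_l$ using Theorem~\ref{pure-theorem}, and push back up to $X$ along $\beta_l$ using the compatibility $\psi\beta_l=\alpha_l\psi_l$. The only cosmetic difference is that you explicitly flag the epimorphism step cited from~\cite{renshaw-85}, which the paper states just before the corollary rather than inside its proof.
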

\begin{proof}
Suppose that $(X_i,\phi_{i,j})$ and $(Y_i,\theta_{i,j})$ are direct systems and for each $i\in I$ there exists a pure epimorphism $\psi:X_i\to Y_i$  and suppose $(X,\beta_i)$ and $(Y,\alpha_i)$, the directed colimits of these systems are such that
$$
\begin{tikzpicture}[description/.style={fill=white,inner sep=2pt}]
\matrix (m) [matrix of math nodes, row sep=3em,
column sep=2.5em, text height=1.5ex, text depth=0.25ex]
{X_i & Y_i \\
 X & Y\\ };
\path[->,font=\scriptsize]
(m-1-1) edge node[auto, left] {$\beta_i$} (m-2-1)
(m-1-2) edge node[auto,right] {$\alpha_i$} (m-2-2)
(m-2-1) edge[->] node[auto,below] {$\psi$} (m-2-2)
(m-1-1) edge[->] node[auto,above] {$\psi_i$} (m-1-2);
\end{tikzpicture}
\hskip2em
\begin{tikzpicture}[description/.style={fill=white,inner sep=2pt}]
\matrix (m) [matrix of math nodes, row sep=3em,
column sep=2.5em, text height=1.5ex, text depth=0.25ex]
{X_i & X_j \\
 Y_i & Y_j\\ };
\path[->,font=\scriptsize]
(m-1-1) edge node[auto, left] {$\psi_i$} (m-2-1)
(m-1-2) edge node[auto,right] {$\psi_j$} (m-2-2)
(m-2-1) edge[->] node[auto,below] {$\theta_{i,j}$} (m-2-2)
(m-1-1) edge[->] node[auto,above] {$\phi_{i,j}$} (m-1-2);
\end{tikzpicture}$$
commute for all $i\le j\in I$.

Suppose there are $y_1,\ldots,y_n\in Y, s_1,\ldots, s_m,t_1,\ldots t_m\in S$ and relations
$$
y_{j_i}s_i=y_{k_i}t_i \quad (1\le i\le m).
$$
By Lemma~\ref{directlimit-lemma} there exists $l \in I$ and $z_1,\ldots,z_n\in Y_l$ such that $\alpha_l(z_r)=y_r$ for $1\le r\le n$, and
$$
z_{j_i}s_i=z_{k_i}t_i \hbox{ for all }1\le i\le m.
$$
Since $\psi_l$ is pure there exist $x_1,\ldots,x_n\in X_l$ such that $\psi_l(x_r)=z_r$ for $1\le r\le n$, and
$$
x_{j_i}s_i=x_{k_i}t_i \hbox{ for all }1\le i\le m.
$$
Hence
$$
\beta_l(x_{j_i})s_i=\beta_l(x_{k_i})t_i \hbox{ for all }1\le i\le m.
$$
and $\psi\beta_l(x_r)=\alpha_l\psi_l(x_r)=\alpha_l(z_r) = y_r$ for $1\le r\le n$ and so $\psi$ is pure.
\end{proof}

\begin{lemma}\label{pullback-pure-lemma}
Let $S$ be a monoid, let
$$
\begin{tikzpicture}[description/.style={fill=white,inner sep=2pt}]
\matrix (m) [matrix of math nodes, row sep=3em,
column sep=2.5em, text height=1.5ex, text depth=0.25ex]
{A & B \\
 C & D\\ };
\path[->,font=\scriptsize]
(m-1-1) edge node[auto, left] {$\alpha$} (m-2-1)
(m-1-2) edge node[auto,right] {$\beta$} (m-2-2)
(m-2-1) edge[->] node[auto,below] {$\psi$} (m-2-2)
(m-1-1) edge[->] node[auto,above] {$\phi$} (m-1-2);
\end{tikzpicture}
$$
be a pullback diagram of $S-$acts and suppose that $\psi$ is a pure epimorphism. Then $\phi$ is also a pure epimorphism.
\end{lemma}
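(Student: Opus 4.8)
The plan is to use the relation-lifting characterisation of pure epimorphisms from Theorem~\ref{pure-theorem} together with the concrete description of the pullback $A$ as the fibre product
$$
A = \{(c,b) \in C\times B : \psi(c) = \beta(b)\},
$$
on which $S$ acts componentwise, with $\alpha(c,b)=c$ and $\phi(c,b)=b$. First I would check that $\phi$ is an epimorphism: given $b\in B$, surjectivity of $\psi$ produces $c\in C$ with $\psi(c)=\beta(b)$, so $(c,b)\in A$ and $\phi(c,b)=b$. This is just the standard fact that pullbacks of epimorphisms of $S-$acts are epimorphisms, and it discharges the surjectivity half of the definition.

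For the purity condition, suppose we are given $b_1,\ldots,b_n\in B$ together with relations $b_{j_i}s_i = b_{k_i}t_i$ for $1\le i\le m$. Applying the $S-$map $\beta$ yields relations $\beta(b_{j_i})s_i = \beta(b_{k_i})t_i$ in $D$. Since $\psi$ is a pure epimorphism, Theorem~\ref{pure-theorem} provides elements $c_1,\ldots,c_n\in C$ with $\psi(c_r)=\beta(b_r)$ for each $r$ and $c_{j_i}s_i = c_{k_i}t_i$ for all $i$.

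The key step is to assemble the lifts inside the pullback. Because $\psi(c_r)=\beta(b_r)$, each pair $a_r := (c_r,b_r)$ lies in $A$, and $\phi(a_r)=b_r$ as required. Since the action on $A$ is componentwise, we have $a_{j_i}s_i = (c_{j_i}s_i, b_{j_i}s_i)$ and $a_{k_i}t_i = (c_{k_i}t_i, b_{k_i}t_i)$; the first coordinates agree by purity of $\psi$ and the second by the given relations, so $a_{j_i}s_i = a_{k_i}t_i$ for all $i$. By Theorem~\ref{pure-theorem}, $\phi$ is pure.

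I do not expect a genuine obstacle here: the only thing to be careful about is the bookkeeping of which relations live in which act, namely pushing the given relations forward along $\beta$ into $D$, lifting them along the pure map $\psi$ into $C$, and then recombining these lifts with the original elements of $B$ so that the resulting pairs land in the fibre product. The componentwise nature of the action on $A$ is precisely what forces both coordinates of each relation to hold simultaneously, so that the purity of $\psi$ transfers cleanly to $\phi$.
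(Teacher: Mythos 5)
Your proof is correct, but it takes a different route from the paper's. The paper argues entirely diagrammatically: given a finitely presented act $M$ and a map $f\colon M\to B$, purity of $\psi$ lifts $\beta f$ to some $g\colon M\to C$ with $\psi g=\beta f$, and then the universal property of the pullback produces $h\colon M\to A$ with $\phi h=f$; this uses only the defining property of pure epimorphisms and never opens up the pullback, so it would work verbatim in any category with pullbacks where purity is defined by lifting against finitely presented objects. You instead invoke the equational characterisation of purity (Theorem~\ref{pure-theorem}) together with the explicit fibre-product construction $A=\{(c,b)\in C\times B : \psi(c)=\beta(b)\}$ with componentwise action: you push the relations on $b_1,\ldots,b_n$ forward along $\beta$, lift them through the pure map $\psi$ to elements $c_1,\ldots,c_n\in C$ satisfying the same relations, and pair them up to get preimages $(c_r,b_r)\in A$ on which the relations hold coordinatewise. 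What your approach buys is concreteness --- it avoids any mention of finitely presented acts and makes the lifting of elements completely explicit --- at the cost of relying on the specific construction of pullbacks in the category of $S$-acts rather than just their universal property; the paper's argument is shorter and more portable, but yours makes visible exactly why the relation-lifting property transfers. Both are complete proofs, and in both cases surjectivity of $\phi$ is handled the same way (lift $\beta(b)$ through the surjection $\psi$ and pair).
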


\begin{proof}
That $\phi$ is onto is clear. Suppose that $M$ is finitely presented and that $f:M\to B$ is a morphism. Then there exists $g:M\to C$ such that $\psi g=\beta f$. Since $A$ is a pullback then there exists a unique $h:M\to A$ such that $\phi h=f$ and $\alpha h = g$.
\end{proof}

Although not every pure epimorphism splits, we can deduce
\begin{theorem}
Let $S$ be a monoid and let $\psi:X\to Y$ be an epimorphism. Then $\psi$ is pure if and only if it is a directed colimit of split epimorphisms.
\end{theorem}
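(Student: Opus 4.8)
The plan is to prove the two implications separately, the backward one being immediate from results already in hand and the forward one requiring a pullback construction built on Lemma~\ref{pullback-pure-lemma}. For the backward direction, suppose $\psi$ is a directed colimit of split epimorphisms $\psi_i$. As observed in the discussion just before the statement, every split epimorphism is pure, so each $\psi_i$ is pure; then Corollary~\ref{pure-epimorphism-corollary} immediately gives that $\psi$ is pure.

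For the forward direction, suppose $\psi:X\to Y$ is pure. The first step is to express $Y$ as a directed colimit of finitely presented acts. I would do this by the standard presentation argument: choose a free act $F=\dot\bigcup_{a\in Y}x_aS$ with projection $\pi:F\to Y$ and kernel congruence $\rho$, and index by pairs $(T,R)$ with $T\subseteq Y$ finite and $R\subseteq\rho\cap(F_T\times F_T)$ finite, where $F_T=\dot\bigcup_{a\in T}x_aS$. Ordering by inclusion in both coordinates gives a directed set, the acts $Y_{(T,R)}=F_T/\langle R\rangle$ are finitely presented, and a routine check (adjoining the pair $(u,v)$ to the relations whenever $\pi(u)=\pi(v)$) shows that $Y$, with the maps $\alpha_{(T,R)}$ induced by $\pi$, is their directed colimit. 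I shall relabel the index set as $I$ and write $(Y_i,\theta_{i,j})$ with structure maps $\alpha_i$.

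Next, for each $i\in I$ form the pullback of $\psi$ along $\alpha_i$, giving $X_i=\{(x,y_i)\in X\times Y_i:\psi(x)=\alpha_i(y_i)\}$ with projections $\beta_i:X_i\to X$ and $\psi_i:X_i\to Y_i$. By Lemma~\ref{pullback-pure-lemma} each $\psi_i$ is a pure epimorphism, and since $Y_i$ is finitely presented, applying the definition of pure epimorphism with $M=Y_i$ and $f=1_{Y_i}$ produces a section of $\psi_i$; hence each $\psi_i$ splits. For $i\le j$ the universal property of the pullback $X_j$ yields a unique $\phi_{i,j}:X_i\to X_j$, concretely $(x,y_i)\mapsto(x,\theta_{i,j}(y_i))$, and one checks that $(X_i,\phi_{i,j})$ is a direct system with $\beta_j\phi_{i,j}=\beta_i$, $\psi_j\phi_{i,j}=\theta_{i,j}\psi_i$ and $\psi\beta_i=\alpha_i\psi_i$, so that $\psi$ is a candidate for the directed colimit of the split epimorphisms $\psi_i$ in the sense defined above.

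It remains to show that $(X,\beta_i)$ is the directed colimit of $(X_i,\phi_{i,j})$, for then $\psi$ is exactly the directed colimit of the split epimorphisms $\psi_i$ and we are done. I would verify the two characterizing properties of a directed colimit directly. Surjectivity is easy: given $x\in X$, choose $i$ and $y_i\in Y_i$ with $\alpha_i(y_i)=\psi(x)$; then $(x,y_i)\in X_i$ and $\beta_i(x,y_i)=x$. For the identification property, if $\beta_i(x,y_i)=\beta_j(x',y_j')$ then $x=x'$ and $\alpha_i(y_i)=\psi(x)=\alpha_j(y_j')$, so Lemma~\ref{direct-limit-lemma} gives $k\ge i,j$ with $\theta_{i,k}(y_i)=\theta_{j,k}(y_j')$, whence $\phi_{i,k}(x,y_i)=\phi_{j,k}(x',y_j')$; the converse implication follows at once from $\beta_k\phi_{i,k}=\beta_i$. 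The main obstacle is really the bookkeeping in this construction — producing the presentation of $Y$ by finitely presented acts and confirming that the pullbacks assemble into a direct system whose colimit reconstructs $X$ — since the splitting of each $\psi_i$ and the backward direction are essentially immediate from the results already established.
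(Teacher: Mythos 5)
Your proof is correct and takes essentially the same route as the paper: the backward direction via Corollary~\ref{pure-epimorphism-corollary} applied to split (hence pure) epimorphisms, and the forward direction by pulling $\psi$ back along the canonical maps from a directed system of finitely presented acts presenting $Y$, splitting each pullback using Lemma~\ref{pullback-pure-lemma} together with finite presentation, and checking that $(X,\beta_i)$ is the directed colimit of the pullbacks. The only differences are cosmetic: the paper simply cites Stenstr\"om's result that $Y$ is a directed colimit of finitely presented acts where you re-derive it, and it verifies the colimit via the universal property where you use the element-wise characterization from Section 2.
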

\begin{proof}
Suppose that $\psi$ is pure. We know (\cite[Proposition 4.1]{stenstrom-71}) that $Y$ is a directed colimit of finitely presented acts $(Y_i,\phi_{i,j})$ and so let $\alpha_i:Y_i\to Y$ be the canonical maps. For each $Y_i$ let
$$
\begin{tikzpicture}[description/.style={fill=white,inner sep=2pt}]
\matrix (m) [matrix of math nodes, row sep=3em,
column sep=2.5em, text height=1.5ex, text depth=0.25ex]
{X_i & Y_i \\
 X & Y\\ };
\path[->,font=\scriptsize]
(m-1-1) edge node[auto, left] {$\beta_i$} (m-2-1)
(m-1-2) edge node[auto,right] {$\alpha_i$} (m-2-2)
(m-2-1) edge[->] node[auto,below] {$\psi$} (m-2-2)
(m-1-1) edge[->] node[auto,above] {$\psi_i$} (m-1-2);
\end{tikzpicture}
$$
be a pullback diagram so that by Lemma~\ref{pullback-pure-lemma} $\psi_i$ is pure. Hence since $Y_i$ is finitely presented then it easily follows that $\psi_i$ splits. 
Notice that $X_i=\{(y_i,x)\in Y_i\times X | \alpha_i(y_i)=\psi(x)\}, \psi_i(y_i,x) = y_i$ and $\beta_i(y_i,x) = x$ and that since $\psi$ is onto then $X_i\ne\emptyset$.

For $i\le j$ define $\theta_{i,j}:X_i\to X_j$ by $\theta_{i,j}(y_i,x)=(\phi_{i,j}(y_i),x)$ and notice that $\beta_j\theta_{i,j}=\beta_i$ and that $\psi_j\theta_{i,j} =\phi_{i,j}\psi_i$.
Suppose now that there exists $Z$ and $\gamma_i:X_i\to Z$ with $\gamma_j\theta_{i,j}=\gamma_i$ for all $i\le j$.
Define $\gamma:X\to Z$ by $\gamma(x)=\gamma_i(y_i,x)$ where $i$ and $y_i$ are chosen so that $\alpha_i(y_i) = \psi(x)$. Then $\gamma$ is well-defined since if $\psi(x)=\alpha_j(y_j)$ then there exists $k\ge i,j$ with $\phi_{i,k}(y_i)=\phi_{j,k}(y_j)$ and
$$
\begin{array}{rl}
\gamma_i(y_i,x) &=\gamma_k\theta_{i,k}(y_i,x)\\
&=\gamma_k(\phi_{i,k}(y_i),x)\\
&=\gamma_k(\phi_{j,k}(y_j),x)\\
&=\gamma_k\theta_{j,k}(y_j,x)\\
&=\gamma_j(y_j,x).\\
\end{array}
$$
Then $\gamma$ is an $S-$map and clearly $\gamma\beta_i = \gamma_i$. Finally, if $\gamma':X\to Z$ is such that $\gamma'\beta_i=\gamma_i$ for all $i$, then $\gamma'(x) = \gamma'\beta_i(y_i,x) =\gamma_i(y_i,x) = \gamma(x)$ and so $\gamma$ is unique. We therefore have that $(X,\beta_i)$ is the directed colimit of $(X_i,\theta_{i,j})$ as required.

Conversely, since split epimorphisms are pure then $\psi$ is pure by Corollary~\ref{pure-epimorphism-corollary}.
\end{proof}

\begin{example}
\rm Let $S$ be as in Example \ref{min-gp-cong}. Notice that for all $e\in E(S)$, $S\to S/\ker\lambda_e$ splits with splitting map $s\ker\lambda_e\mapsto es$. Moreover
$$
\begin{tikzpicture}[description/.style={fill=white,inner sep=2pt}]
\matrix (m) [matrix of math nodes, row sep=3em,
column sep=2.5em, text height=1.5ex, text depth=0.25ex]
{S & S/\ker\lambda_e \\
 S & S/\sigma\\ };
\path[->,font=\scriptsize]
(m-1-1) edge node[auto, left] {$1_S$} (m-2-1)
(m-1-2) edge node[auto,right] {} (m-2-2)
(m-2-1) edge[->] node[auto,below] {$\sigma^\natural$} (m-2-2)
(m-1-1) edge[->] node[auto,above] {} (m-1-2);
\end{tikzpicture}
$$
commutes for all $e\in E(S)$ and $\sigma^\natural$ is a directed colimit of split epimorphisms.
\end{example}

\begin{theorem}[{\cite[Theorem 5.3]{stenstrom-71}}]\label{stenstrom-theorem}
Let $S$ be a monoid. Then an $S-$act $Y$ is strongly flat if and only if every epimorphism $X\to Y$ is pure.
\end{theorem}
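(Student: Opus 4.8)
The plan is to verify both implications through the equational description of purity in Theorem~\ref{pure-theorem}, so that in each direction ``pure'' is read as the requirement that every finite system of relations holding among finitely many elements of $Y$ lifts along the given epimorphism.

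For the forward direction, suppose $Y$ is strongly flat. I would invoke the identification, recorded in the introduction, of the strongly flat acts with the directed colimits of finitely generated free acts, and so write $Y=\varinjlim F_i$ as such a colimit with canonical maps $\alpha_i:F_i\to Y$. Let $\psi:X\to Y$ be any epimorphism and suppose we are given $y_1,\dots,y_n\in Y$ with relations $y_{j_i}s_i=y_{k_i}t_i$ for $1\le i\le m$. Applying Lemma~\ref{directlimit-lemma} to the colimit $Y=\varinjlim F_i$ produces an index $l$ and elements $w_1,\dots,w_n\in F_l$ with $\alpha_l(w_r)=y_r$ and $w_{j_i}s_i=w_{k_i}t_i$ in $F_l$; that is, the whole system already lifts into the single free act $F_l$. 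Since $F_l$ is free it is projective, so $\alpha_l:F_l\to Y$ lifts through the epimorphism $\psi$ to a map $\theta:F_l\to X$ with $\psi\theta=\alpha_l$. Setting $x_r=\theta(w_r)$ then gives $\psi(x_r)=y_r$ and $x_{j_i}s_i=\theta(w_{j_i}s_i)=\theta(w_{k_i}t_i)=x_{k_i}t_i$, so $\psi$ is pure by Theorem~\ref{pure-theorem}.

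For the converse, suppose every epimorphism onto $Y$ is pure. I would choose a free act $F$ together with an epimorphism $\pi:F\to Y$ (for instance the free act on the underlying set of $Y$); being free, $F$ is projective, hence strongly flat, and in particular satisfies $(P)$ and $(E)$. By hypothesis $\pi$ is pure. To check $(P)$ for $Y$, take $yu=y'u'$ in $Y$; purity applied to this one relation yields $x,x'\in F$ with $\pi(x)=y$, $\pi(x')=y'$ and $xu=x'u'$, whereupon $(P)$ in $F$ gives $x''\in F$ and $s,s'\in S$ with $x=x''s$, $x'=x''s'$ and $su=s'u'$; applying $\pi$ produces $y=\pi(x'')s$, $y'=\pi(x'')s'$ with $su=s'u'$, which is $(P)$ for $Y$. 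Condition $(E)$ follows identically from the single relation $yu=yu'$, lifted to $F$ and resolved there by $(E)$. Hence $Y$ is strongly flat.

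The only delicate point is the appeal, in the forward direction, to the identification of strongly flat acts with directed colimits of finitely generated free acts; the remaining steps are routine diagram chases. If one wished to avoid that identification and argue purely from conditions $(P)$ and $(E)$, the substitute would be to prove directly that any finite system of relations satisfied in a strongly flat act lifts to a finitely generated free act mapping into it, by induction on the number $m$ of relations, merging generators one relation at a time via $(P)$ and $(E)$. This combinatorial lifting lemma is where the real work would sit, after which the projectivity argument above applies verbatim.
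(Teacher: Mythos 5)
Your proof is correct in both directions. Note first that the paper itself offers no proof of this statement --- it is quoted directly from Stenstr\"om --- so there is no in-paper argument to compare against; what can be checked is whether your reasoning is sound and consistent with the facts the paper does record, and it is. Your forward direction uses exactly three ingredients available in the paper: the identification of strongly flat acts with directed colimits of finitely generated free acts (stated as known in the introduction), the relation-lifting property of directed colimits (Lemma~\ref{directlimit-lemma}), and projectivity of free acts, combined via the equational criterion of Theorem~\ref{pure-theorem}; the computation $x_{j_i}s_i=\theta(w_{j_i}s_i)=\theta(w_{k_i}t_i)=x_{k_i}t_i$ is valid since $\theta$ is an $S-$map. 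Your converse --- lifting a single relation along a pure epimorphism from a free act $F$ onto $Y$ and then resolving it in $F$ by $(P)$, respectively $(E)$, before pushing back down --- is the standard argument and is complete (taking $n=2,m=1$ for $(P)$ and $n=1,m=1$ for $(E)$ in Theorem~\ref{pure-theorem}). The one point worth making explicit, which you yourself flag: in Stenstr\"om's original paper the equivalence of strong flatness with being a directed colimit of finitely generated free acts is part of the \emph{same} theorem, so as a reconstruction of his proof your forward direction would be circular; it is non-circular here only because the present paper treats that identification as independently established background. Your sketched alternative --- an induction on the number of relations using $(P)$ and $(E)$ to lift a finite system into a finitely generated free act --- is indeed where the real content of a self-contained proof would lie, and is essentially how the original argument avoids the circularity.
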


In~\cite{normak-87}, Normak defines an epimorphism $\phi:X\to Y$ to be {\em $1-$pure} if for every element $y\in Y$ and relations $ys_i = yt_i, i=1,\ldots, n$ there exists an element $x\in X$ such that $\phi(x)=y$ and $xs_i=xt_i$ for all $i$. He proves

\begin{proposition}[{\cite[Proposition 1.17]{normak-87}}]
Let $S$ be a monoid. An epimorphism $\phi:X\to Y$ is $1-$pure if and only if for all cyclic finitely presented $S-$acts $C$ and every morphism $f:C\to Y$ there exits $g:C\to X$ with $f=\phi g$.
\end{proposition}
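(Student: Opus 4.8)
The plan is to set up and exploit the dictionary between cyclic finitely presented $S$-acts and finite systems of relations on a single generator. A cyclic $S$-act is, up to isomorphism, a quotient $S/\rho$ of the free cyclic act $S$ by a right congruence $\rho$, and it is finitely presented precisely when $\rho$ is finitely generated, say generated by pairs $(s_i,t_i)$ for $1\le i\le n$ (here I use the standard fact that finite presentability does not depend on the chosen finite free presentation). Under this correspondence, a morphism $f\colon S/\rho\to Y$ is determined by the image $y=f(1\rho)$ of the generator, which must satisfy the relations $ys_i=yt_i$ for all $i$; and a factorisation $f=\phi g$ amounts to producing a preimage $x$ of $y$ satisfying the very same relations $xs_i=xt_i$. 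This is exactly the assertion of $1$-purity, so the two conditions are tightly linked once the dictionary is in place.

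For the implication that the lifting property forces $1$-purity, I would take $y\in Y$ and relations $ys_i=yt_i$, $1\le i\le n$, let $\rho$ be the right congruence on $S$ generated by the pairs $(s_i,t_i)$, and set $C=S/\rho$, which is cyclic and finitely presented. The $S$-map $S\to Y$, $s\mapsto ys$, identifies each $s_i$ with $t_i$, so its kernel congruence contains $\rho$; hence $f\colon C\to Y$, $s\rho\mapsto ys$, is well-defined with $f(1\rho)=y$. Applying the hypothesis yields $g\colon C\to X$ with $\phi g=f$, and putting $x=g(1\rho)$ gives $\phi(x)=y$. Since $(s_i,t_i)\in\rho$ we have $(1\rho)s_i=(1\rho)t_i$ in $C$, and applying the $S$-map $g$ delivers $xs_i=xt_i$, as required.

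The converse runs the same argument in reverse. Given a cyclic finitely presented $C\cong S/\rho$ with $\rho$ generated by $(s_i,t_i)$, $1\le i\le n$, and a morphism $f\colon C\to Y$, I set $y=f(1\rho)$; the identities $(1\rho)s_i=(1\rho)t_i$ in $C$ push forward under $f$ to $ys_i=yt_i$ in $Y$. By $1$-purity I choose $x\in X$ with $\phi(x)=y$ and $xs_i=xt_i$, and define $g\colon C\to X$ by $s\rho\mapsto xs$. A direct computation then gives $\phi g=f$.

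The only step requiring any care, and hence the main (modest) obstacle, is the well-definedness of the maps $f$ and $g$ in each direction. In both cases one must invoke that $\rho$ is the \emph{smallest} congruence containing the generating pairs, so that any $S$-map out of $S$ that collapses those pairs factors uniquely through $S/\rho$; this is where the relations $xs_i=xt_i$ (respectively $ys_i=yt_i$) are used to guarantee $\rho$ lies in the relevant kernel. Everything else is a formal consequence of the generators-and-relations description of cyclic finitely presented acts.
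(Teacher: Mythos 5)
The paper itself offers no proof of this proposition: it is quoted verbatim from Normak's paper (his Proposition 1.17), so there is no in-paper argument to compare yours against. Your proof is correct, and it is the natural (and almost certainly the original) argument: set up the correspondence between cyclic finitely presented acts and finitely generated right congruences on $S$, then observe that a morphism $S/\rho\to Y$ is exactly a choice of $y\in Y$ killing the generating pairs of $\rho$, and a lift is exactly a preimage $x$ killing the same pairs. Both directions then reduce to well-definedness of maps out of $S/\rho$, which you correctly settle by minimality of the generated congruence. The one external ingredient is the fact you flag explicitly: a cyclic act that is finitely presented (via \emph{some} finitely generated free presentation) can be written as $S/\rho$ with $\rho$ a finitely generated right congruence on $S$. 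This invariance of finite presentability under change of finite presentation is indeed standard for acts --- it is the content of Normak's work on finitely presented polygons, which this paper cites as the source of its definition of finitely presented --- so invoking it is legitimate, though in a self-contained write-up you would want to give that reference. No gaps otherwise.
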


\begin{proposition}[{\cite[Proposition 2.2]{normak-87}}]\label{1-pure-proposition}
Let $S$ be a monoid. $Y$ satisfies condition $(E)$ if and only if every epimorphism $X\to Y$ is $1-$pure.
\end{proposition}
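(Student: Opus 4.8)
The plan is to prove both implications, mirroring the structure of the strongly flat / pure characterisation in Theorem~\ref{stenstrom-theorem}. The simplification here is that both condition $(E)$ and $1$-purity concern relations on a \emph{single} element $y$, so no bookkeeping over a coproduct of several generators is needed, and each hypothesis feeds almost directly into the other.

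For the forward direction, suppose $Y$ satisfies condition $(E)$ and let $\phi:X\to Y$ be any epimorphism. Given $y\in Y$ and relations $ys_i=yt_i$ $(1\le i\le n)$, I would first show, by induction on $n$, that there exist $a\in Y$ and $w\in S$ with $y=aw$ and $ws_i=wt_i$ for all $i$. The base case $n=1$ is a single application of condition $(E)$ to $ys_1=yt_1$. For the inductive step, given $a\in Y$ and $w\in S$ handling the first $n-1$ relations, the last relation reads $a(ws_n)=a(wt_n)$; applying condition $(E)$ produces $a'\in Y$ and $v\in S$ with $a=a'v$ and $v(ws_n)=v(wt_n)$, and setting $w'=vw$ preserves the earlier relations since $w's_i=v(ws_i)=v(wt_i)=w't_i$, while also giving $w's_n=w't_n$ and $y=a'w'$. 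Once such $a,w$ are found, surjectivity of $\phi$ supplies $b\in X$ with $\phi(b)=a$; then $x=bw$ satisfies $\phi(x)=aw=y$ and $xs_i=b(ws_i)=b(wt_i)=xt_i$, so $\phi$ is $1$-pure.

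For the converse, suppose every epimorphism onto $Y$ is $1$-pure. I would test this on a free presentation $\pi:F\to Y$, where $F=\dot\bigcup_{\iota} b_\iota S$ is free on a generating set of $Y$ (such an epimorphism always exists). Given a relation $yu=yu'$ in $Y$, apply $1$-purity of $\pi$ to this single relation to obtain $f\in F$ with $\pi(f)=y$ and $fu=fu'$. Writing $f=b_\iota w$ for a basis element $b_\iota$ and some $w\in S$, the equality $b_\iota(wu)=b_\iota(wu')$ inside the free component $b_\iota S\cong S$ forces $wu=wu'$. Setting $a=\pi(b_\iota)$ then yields $y=\pi(b_\iota w)=aw$ together with $wu=wu'$, which is precisely condition $(E)$.

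The genuinely creative step is the choice of test object in the converse: a purity hypothesis only bites when applied to a carefully chosen epimorphism, and freeness of $F$ is exactly what lets us cancel the generator $b_\iota$ and descend the relation from $F$ back to $S$. The forward direction is routine apart from the inductive bookkeeping, whose only delicate point is checking that each application of condition $(E)$ leaves the previously secured relations intact — which it does because left multiplication by the new element $v$ respects all earlier equalities $ws_i=wt_i$.
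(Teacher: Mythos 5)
Your proof is correct, but there is nothing in the paper to compare it against: the paper states this proposition as a quoted result of Normak (\cite[Proposition 2.2]{normak-87}) and gives no proof of its own. Your argument stands as a valid, self-contained replacement for the citation. The forward direction is sound: the induction that consolidates the $n$ relations $ys_i=yt_i$ into a single pair $a\in Y$, $w\in S$ with $y=aw$ and $ws_i=wt_i$ works precisely because replacing $w$ by $w'=vw$ preserves every equality already secured ($w's_i=v(ws_i)=v(wt_i)=w't_i$), and once such a pair exists the lift through any epimorphism is immediate. The converse correctly identifies the right test object: a free presentation $\pi:F\to Y$ with $F=\dot\bigcup_\iota b_\iota S$. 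Applying $1$-purity to the single relation $yu=yu'$ gives $f=b_\iota w$ with $fu=fu'$, and since the component $b_\iota S$ is free cyclic, $b_\iota(wu)=b_\iota(wu')$ forces $wu=wu'$; pushing $b_\iota$ forward gives $y=\pi(b_\iota)w$ with $wu=wu'$, which is exactly condition $(E)$. (An alternative converse, closer in spirit to Normak's own development, would invoke his Proposition 1.17, also quoted in the paper: given $yu=yu'$, the cyclic finitely presented act $S/\rho$ with $\rho$ generated by $(u,u')$ maps to $Y$ via $s\rho\mapsto ys$, and lifting this map along a free presentation yields the same conclusion. Your direct route avoids even that machinery, needing only the disjoint-union structure of free acts.)
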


As a generalisation, we say that an epimorphism $g:B \rightarrow A$ of $S$-acts is {\em $n$-pure} if for every family of $n$ elements $a_1,\ldots,a_n \in A$ and every finite family of relations $a_{\alpha_i}s_i=a_{\beta_i}t_i$, $i=1,\ldots,m$, there exist $b_1,\ldots,b_n \in B$ such that $g(b_i)=a_i$ and $b_{\alpha_i}s_i=b_{\beta_i}t_i$ for all $i$.

We are interested in the cases $n=1$ and $n=2$. Clearly pure implies $2-$pure implies $1-$pure.

\begin{proposition}\label{ce-pure-proposition}
Let $S$ be a monoid and let $\psi : X\to Y$ be an $S-$epimorphism in which $X$ satisfies condition $(E)$. Then $Y$ satisfies condition $(E)$ if and only if $\psi$ is $1-$pure.
\end{proposition}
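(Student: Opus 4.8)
The plan is to handle the two implications separately, with the forward direction being essentially a citation and the reverse direction carrying the real content. Throughout, the key observation is that condition $(E)$ involves only a \emph{single} relation, so the full strength of purity is never needed and $1$-purity is exactly the right tool.

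For the implication that $Y$ satisfying condition $(E)$ forces $\psi$ to be $1$-pure, I would simply invoke Proposition~\ref{1-pure-proposition}, which asserts that $Y$ satisfies condition $(E)$ precisely when \emph{every} epimorphism onto $Y$ is $1$-pure. In particular the given $\psi$ is $1$-pure. It is worth flagging that this direction makes no use of the hypothesis that $X$ satisfies condition $(E)$.

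For the converse, suppose $\psi$ is $1$-pure and let $y\in Y$ and $u,u'\in S$ satisfy $yu=yu'$. Since this is a single relation, applying $1$-purity (with $n=1$, $s_1=u$, $t_1=u'$) produces $x\in X$ with $\psi(x)=y$ and $xu=xu'$. Now the hypothesis that $X$ satisfies condition $(E)$ applies to the relation $xu=xu'$ inside $X$, yielding $x''\in X$ and $s\in S$ with $x=x''s$ and $su=su'$.

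The final step is to transport the witness back down to $Y$. Setting $y''=\psi(x'')$ and using that $\psi$ is an $S$-map gives $y''s=\psi(x'')s=\psi(x''s)=\psi(x)=y$, while $su=su'$ already holds, so $y''$ and $s$ witness condition $(E)$ for $y$ in $Y$. I do not anticipate any genuine obstacle: once the single relation $yu=yu'$ is lifted into $X$ by $1$-purity and resolved there by condition $(E)$, the homomorphism property of $\psi$ carries the factorization $x=x''s$ back to $y=y''s$ automatically, and the equation $su=su'$ is purely about elements of $S$ and so is unaffected by the passage from $X$ to $Y$.
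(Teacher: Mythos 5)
Your proof is correct and follows essentially the same route as the paper: the direction from condition $(E)$ on $Y$ to $1$-purity is cited from Proposition~\ref{1-pure-proposition}, and the converse lifts the single relation $yu=yu'$ to $X$ via $1$-purity, resolves it there using condition $(E)$ on $X$, and pushes the witness back down through the $S$-map $\psi$. No gaps.
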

\begin{proof}
Suppose that $\psi$ is $1-$pure and that $y\in Y,s,t\in S$ are such that $ys=yt$ in $Y$.
 Hence there exists $x\in X$ such that $\psi(x)=y$ and $xs=xt$. Since $X$ satisfies condition $(E)$ there exists $x'\in X, u\in S$ such that $x=x'u, us=ut$ and so $y=\psi(x')u, us=ut$ and $Y$ satisfies condition $(E)$.

The converse holds by Proposition \ref{1-pure-proposition}.
\end{proof}

\begin{proposition}\label{cp-pure-proposition}
Let $S$ be a monoid and let $\psi : X\to Y$ be an $S-$epimorphism in which $X$ satisfies condition $(P)$. If $\psi$ is $2-$pure  then $Y$ satisfies condition $(P)$.
\end{proposition}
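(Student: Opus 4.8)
The plan is to mirror the structure of the proof of Proposition~\ref{ce-pure-proposition}: take an arbitrary witnessing instance of condition $(P)$ in $Y$, lift it along $\psi$ using purity, apply condition $(P)$ upstairs in $X$, and then push the resulting data back down. The crucial observation is that the defining instance of condition $(P)$ involves exactly \emph{two} act elements $a,a'$ and a \emph{single} relation $au=a'u'$, so it is precisely of the shape that $2$-purity is designed to lift. This is why $2$-purity is the natural hypothesis here, whereas $1$-purity (which only lifts relations on a single element) sufficed for condition $(E)$ in Proposition~\ref{ce-pure-proposition}.

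First I would start with an arbitrary instance of condition $(P)$ in $Y$: suppose $yu=y'u'$ with $y,y'\in Y$ and $u,u'\in S$. Setting $y_1=y$ and $y_2=y'$, this is the single relation $y_1u=y_2u'$ on the two elements $y_1,y_2\in Y$, so it constitutes an admissible family for $2$-purity (with $n=2$, $m=1$, $\alpha_1=1$, $\beta_1=2$). Applying the hypothesis that $\psi$ is $2$-pure, I obtain $x_1,x_2\in X$ with $\psi(x_1)=y$, $\psi(x_2)=y'$ and $x_1u=x_2u'$ in $X$.

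Next, since $X$ satisfies condition $(P)$ and $x_1u=x_2u'$, there exist $x''\in X$ and $s,s'\in S$ with $x_1=x''s$, $x_2=x''s'$ and $su=s'u'$. Finally I would push this down: putting $y''=\psi(x'')$ and using that $\psi$ is an $S$-map gives $y=\psi(x_1)=\psi(x''s)=\psi(x'')s=y''s$, and likewise $y'=y''s'$, while $su=s'u'$ already holds in $S$. This exhibits $y''\in Y$ and $s,s'\in S$ witnessing condition $(P)$ for the pair $y,y'$, so $Y$ satisfies condition $(P)$.

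There is no serious obstacle here once the matching between the $(P)$-instance and the $2$-pure lifting property is noticed; the only points requiring care are confirming that the single two-element relation $y_1u=y_2u'$ genuinely qualifies as an admissible family for $2$-purity, and that applying the $S$-map $\psi$ correctly transports the equalities $x_1=x''s$ and $x_2=x''s'$ to $Y$. I would also note that, in contrast to the condition $(E)$ case, I do not expect the converse to hold automatically, which is consistent with the proposition being stated in only one direction.
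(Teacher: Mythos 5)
Your proof is correct and follows essentially the same route as the paper's own argument: lift the single relation $yu=y'u'$ on two elements via $2$-purity, apply condition $(P)$ in $X$, and push the witnessing data back down through the $S$-map $\psi$. The only difference is notational, so nothing further is needed.
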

\begin{proof}
Suppose that $\psi$ is $2-$pure and suppose that $y_1,y_2\in Y,s_1,s_2\in S$ are such that $y_1s_1=y_2s_2$ in $Y$. Hence there exists $x_1,x_2\in X$ with $\psi(x_i)=y_i$ and $x_1s_1=x_2s_2$ in $X$. Since $X$ satisfies condition $(P)$ then there exists $x_3\in X, u_1,u_2\in S$ such that $x_1=x_3u_1, x_2=x_3u_2$ and $u_1s_1=u_2s_2$. Consequently, $y_1=\psi(x_3)u_1, y_2=\psi(x_3)u_2$ and $u_1s_1=u_2s_2$ and so $Y$ satisfies condition $(P)$.
\end{proof}

The converse of this last result is false. For example let $S=(\n\cup\{0\},+)$ and let $\Theta_S=\{\theta\}$ be the 1-element $S-$act. Let $x=y=\theta \in \Theta_S$, then $x0=y0$ and $x0=y1$ but there cannot exist $x',y' \in S$ such that $x'+0=y'+0$ and $x'+0=y'+1$ and so $S\to\Theta_S$ is not $2-$pure, but it is easy to check that $\Theta_S$ does satisfy condition $(P)$.

\medskip

From Theorem~\ref{stenstrom-theorem}, Proposition~\ref{ce-pure-proposition} and Proposition~\ref{cp-pure-proposition} we deduce
\begin{corollary}\label{x-pure-corollary}
Let $S$ be a monoid and let $\psi:X\to Y$ be an $S-$epimorphism with $X$ strongly flat. The following are equivalent.
\begin{enumerate}
\item $Y$ is strongly flat;
\item $\psi$ is pure;
\item $\psi$ is $2-$pure.
\end{enumerate}
\end{corollary}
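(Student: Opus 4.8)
The plan is to prove the three conditions equivalent by establishing the cycle $(1)\Rightarrow(2)\Rightarrow(3)\Rightarrow(1)$, relying throughout on the fact that an $S$-act is strongly flat precisely when it satisfies both condition $(P)$ and condition $(E)$, together with the elementary observation (recorded just before the statement) that pure $\Rightarrow$ $2$-pure $\Rightarrow$ $1$-pure. Two of the three implications are essentially formal; the content sits entirely in the step $(3)\Rightarrow(1)$, where the two separate propositions on $(P)$ and $(E)$ must be combined.

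For $(1)\Rightarrow(2)$ I would simply invoke Theorem~\ref{stenstrom-theorem}: if $Y$ is strongly flat then \emph{every} epimorphism onto $Y$ is pure, so in particular the given $\psi$ is pure. For $(2)\Rightarrow(3)$ there is nothing to do beyond noting that a pure epimorphism is $2$-pure by definition, as already remarked.

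The substantive implication is $(3)\Rightarrow(1)$. Assume $\psi$ is $2$-pure. Since $X$ is strongly flat, it satisfies both $(P)$ and $(E)$, so I may feed $X$ into each of the two propositions in turn. Because $2$-pure implies $1$-pure, Proposition~\ref{ce-pure-proposition} (applied with $X$ satisfying $(E)$ and $\psi$ being $1$-pure) yields that $Y$ satisfies condition $(E)$. Independently, Proposition~\ref{cp-pure-proposition} (applied with $X$ satisfying $(P)$ and $\psi$ being $2$-pure) yields that $Y$ satisfies condition $(P)$. Having both $(P)$ and $(E)$ for $Y$ gives that $Y$ is strongly flat, closing the cycle.

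The only point requiring care — and the closest thing to an obstacle — is bookkeeping the purity hypotheses correctly: Proposition~\ref{cp-pure-proposition} genuinely needs $2$-purity (its converse fails, as the example with $S=(\n\cup\{0\},+)$ shows), whereas Proposition~\ref{ce-pure-proposition} only needs $1$-purity, so I must explicitly downgrade $2$-pure to $1$-pure before applying the latter. Once the hypotheses are threaded to the right propositions, the result is immediate.
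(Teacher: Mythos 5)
Your proposal is correct and follows essentially the same route the paper intends: the corollary is deduced exactly by combining Theorem~\ref{stenstrom-theorem} for $(1)\Rightarrow(2)$, the trivial implication pure $\Rightarrow$ $2$-pure for $(2)\Rightarrow(3)$, and Propositions~\ref{cp-pure-proposition} and~\ref{ce-pure-proposition} (the latter via the downgrade from $2$-pure to $1$-pure) for $(3)\Rightarrow(1)$, using the characterisation of strongly flat acts as those satisfying both conditions $(P)$ and $(E)$. Your care in routing the $2$-purity hypothesis to the $(P)$ proposition and $1$-purity to the $(E)$ proposition matches the paper's use of these results precisely.
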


Let $X$ be an $S-$act and $\theta$ a congruence on $X$. Say that $\theta$ is {\em pure} if $X\to X/\theta$ is pure. As a corollary to Theorem~\ref{pure-theorem} we have
\begin{corollary}\label{pure-congruence-corollary}
Let $S$ be a monoid, let $X$ be an $S-$act and $\theta$ a congruence on $X$. Then $\theta$ is pure if and only if for every family $x_1\ldots,x_n\in X$ and relations
$$
x_{j_i}s_i\;\theta\; x_{k_i}t_i \quad (1\le i\le m)
$$
on $X$ there exists $y_1,\ldots,y_n\in X$ such that $y_i\theta x_i$ and
$$
y_{j_i}s_i=y_{k_i}t_i \hbox{ for all }1\le i\le m.
$$
\end{corollary}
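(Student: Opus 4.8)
The plan is to apply Theorem~\ref{pure-theorem} directly to the canonical epimorphism $\theta^\natural:X\to X/\theta$, $x\mapsto x\theta$, since by definition $\theta$ is pure precisely when this map is pure. The entire argument is then a translation between equalities in the quotient $X/\theta$ and $\theta$-relations in $X$, using repeatedly that $(x\theta)s=(xs)\theta$ and that $\theta^\natural(y)=x\theta$ is the same statement as $y\,\theta\,x$.

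For the forward direction, I would assume $\theta$ is pure, so that $\theta^\natural$ satisfies condition (2) of Theorem~\ref{pure-theorem} with $Y=X/\theta$. Given $x_1,\ldots,x_n\in X$ and relations $x_{j_i}s_i\,\theta\,x_{k_i}t_i$, I would put $\bar y_r=x_r\theta\in X/\theta$; then each $\theta$-relation $x_{j_i}s_i\,\theta\,x_{k_i}t_i$ is exactly the equality $\bar y_{j_i}s_i=\bar y_{k_i}t_i$ in $X/\theta$. Condition (2), applied to the family $\bar y_1,\ldots,\bar y_n$, then supplies elements $y_1,\ldots,y_n\in X$ with $\theta^\natural(y_r)=\bar y_r$ (equivalently $y_r\,\theta\,x_r$) and $y_{j_i}s_i=y_{k_i}t_i$ in $X$, which is precisely the asserted conclusion.

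For the converse, I would assume the stated condition and verify condition (2) of Theorem~\ref{pure-theorem} for $\theta^\natural$. Given $\bar y_1,\ldots,\bar y_n\in X/\theta$ and equalities $\bar y_{j_i}s_i=\bar y_{k_i}t_i$, I would choose representatives $x_r\in X$ with $x_r\theta=\bar y_r$; the equalities in $X/\theta$ then read as $\theta$-relations $x_{j_i}s_i\,\theta\,x_{k_i}t_i$ in $X$. The hypothesis produces $y_1,\ldots,y_n\in X$ with $y_r\,\theta\,x_r$, so that $\theta^\natural(y_r)=y_r\theta=x_r\theta=\bar y_r$, and with $y_{j_i}s_i=y_{k_i}t_i$, giving exactly condition (2). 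Hence $\theta^\natural$ is pure, that is, $\theta$ is pure.

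Since both directions reduce to this dictionary, there is no substantive obstacle; the only thing requiring care is to keep the two roles of the variables straight. The corollary's $x_r$ play the role of the given data $y_r$ of Theorem~\ref{pure-theorem} (they are the chosen representatives), whereas the corollary's $y_r$ play the role of the lifts $x_r$ of the theorem. The single point to state explicitly is that an equality $\bar a=\bar b$ in $X/\theta$ is interchangeable with the relation $a\,\theta\,b$ in $X$ for any representatives, which is immediate from the definition of the quotient act.
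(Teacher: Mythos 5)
Your proof is correct and is exactly the argument the paper intends: the paper states this as an immediate corollary of Theorem~\ref{pure-theorem} applied to the canonical epimorphism $\theta^\natural:X\to X/\theta$, with the same dictionary between equalities in $X/\theta$ and $\theta$-relations in $X$ that you spell out. No discrepancies.
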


\begin{corollary}
Let $\rho$ be a right $S-$congruence on a monoid $S$. Then $\rho$ is pure if and only if $S/\rho$ is strongly flat.
\end{corollary}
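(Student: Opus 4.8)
The plan is to recognise this corollary as simply the special case $X=S$ of Corollary~\ref{x-pure-corollary}. First I would observe that $S$, regarded as a right $S-$act, is the free act on a single generator; it is therefore projective, and since $\Pr\subsetneq\SF$ it is strongly flat. Hence the natural epimorphism $\rho^\natural:S\to S/\rho$ has strongly flat domain.

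Next, recall that by definition $\rho$ is a pure congruence precisely when the natural map $\rho^\natural:S\to S/\rho$ is a pure epimorphism. So the assertion to be proved is exactly that $\rho^\natural$ is pure if and only if $S/\rho$ is strongly flat. I would then apply the equivalence of conditions (1) and (2) in Corollary~\ref{x-pure-corollary}, taking $\psi=\rho^\natural$, $X=S$ and $Y=S/\rho$. Because $X=S$ is strongly flat the hypothesis of that corollary is met, and it yields immediately that $S/\rho$ is strongly flat if and only if $\rho^\natural$ is pure. Combined with the reformulation above, this is precisely the desired equivalence.

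There is no serious obstacle here, since the entire content has already been packaged into Corollary~\ref{x-pure-corollary} (which in turn rests on Theorem~\ref{stenstrom-theorem} together with the condition-$(P)$ and condition-$(E)$ analysis of Propositions~\ref{ce-pure-proposition} and~\ref{cp-pure-proposition}). The only point requiring a word of justification is that $S$ itself is strongly flat, which is immediate from its freeness on one generator. I would note in passing that the `if' direction could instead be obtained directly from Theorem~\ref{stenstrom-theorem}: if $S/\rho$ is strongly flat then \emph{every} epimorphism onto it is pure, in particular $\rho^\natural$. However, the `only if' direction genuinely exploits the strong flatness of the domain $S$, so invoking Corollary~\ref{x-pure-corollary} is the most economical route and handles both implications uniformly.
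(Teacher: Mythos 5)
Your proposal is correct and is exactly the derivation the paper intends: the corollary is stated there without proof, immediately following Corollary~\ref{x-pure-corollary}, and it follows at once by taking $\psi=\rho^\natural:S\to S/\rho$ with $X=S$ (strongly flat, being free on one generator) in that corollary, precisely as you argue.
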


\begin{example}
\rm It now follows easily from Example~\ref{min-gc-pure-example} that if $S$ is an inverse monoid with minimum group congruence $\sigma$ then $S/\sigma$ is a strongly flat right $S-$act.
\end{example}

\medskip

Let $f:X\to Y$ be an $S-$monomorphism. Then Renshaw~\cite{renshaw-02} defined $f$ to be {\em $P-$unitary} if
$$
(\forall y,y'\in Y\backslash\im(f))(\forall s,t\in S)\quad ys,y't\in\im(f)\Rightarrow ys=y't.
$$
This is obviously equivalent to whenever $y,y'\in Y, s,t\in S$ are such that $ys\ne y't$ but $ys,y't\in\im(f)$ then either $y\in\im(f)$ or $y'\in\im(f)$.
\smallskip

In the same way he defined $f$ to be {\em $E-$unitary} if
$$
(\forall y\in Y\backslash\im(f))(\forall s,t\in S)\quad ys,yt\in\im(f)\Rightarrow ys=yt,
$$
which is obviously equivalent to whenever $y\in Y, s,t\in S$ are such that $ys\ne yt$ but $ys,yt\in\im(f)$ then $y\in\im(f)$.

\begin{theorem}
Let $S$ be a monoid, let $f:X\to Y$ be a monomorphism and suppose that $Y\to Y/X$ is a $2-$pure epimorphism. Then $f$ is $P-$unitary. Moreover for all $s,t\in S$ there exists $x,x'\in X$ with $xs=x't$.
\end{theorem}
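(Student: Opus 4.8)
The plan is to carry everything out inside the Rees quotient $Y/X$, writing $\pi:Y\to Y/X$ for the quotient map and $0$ for the point onto which $\im(f)$ is collapsed. Two elementary facts about $\pi$ will do all the work: first, $\pi^{-1}(0)=\im(f)$; and second, $\pi$ is injective on $Y\setminus\im(f)$, since the Rees congruence identifies two elements precisely when they are equal or both lie in $\im(f)$. With these in hand the whole argument reduces to selecting, in each of the two claims, the correct two-element family of relations to feed into the hypothesis that $\pi$ is $2$-pure.

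For $P$-unitarity I would take $y,y'\in Y\setminus\im(f)$ and $s,t\in S$ with $ys,y't\in\im(f)$, the goal being $ys=y't$. Applying $\pi$ converts $ys\in\im(f)$ into $\pi(y)s=0$ and $y't\in\im(f)$ into $\pi(y')t=0$, so that in $Y/X$ the two elements $\pi(y),\pi(y')$ satisfy the single relation $\pi(y)s=\pi(y')t$. Now $2$-purity supplies $b_1,b_2\in Y$ with $\pi(b_1)=\pi(y)$, $\pi(b_2)=\pi(y')$ and $b_1s=b_2t$ in $Y$. Because $\pi(y)\neq 0$ and $\pi$ is injective off $\im(f)$, the lifts are forced to be the originals, namely $b_1=y$ and $b_2=y'$; hence $ys=b_1s=b_2t=y't$. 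It is worth flagging that genuinely two elements are in play here, so $1$-purity would be useless and the full strength of $2$-purity is exactly what is needed.

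For the ``moreover'' clause I would run the same machine on the collapsed point. Given arbitrary $s,t\in S$, the identities $0\cdot s=0=0\cdot t$ furnish a relation $a_1s=a_2t$ among the two elements $a_1=a_2=0$, and $2$-purity yields $b_1,b_2\in Y$ with $\pi(b_1)=\pi(b_2)=0$ and $b_1s=b_2t$. Then $b_1,b_2\in\im(f)$, say $b_1=f(x)$ and $b_2=f(x')$ with $x,x'\in X$, so $f(xs)=f(x)s=f(x')t=f(x't)$, and injectivity of $f$ gives $xs=x't$.

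I do not expect a serious obstacle; the only care required is the bookkeeping of the fibres of $\pi$ --- injectivity off $\im(f)$ to pin down the lifts in the first part, and $\pi^{-1}(0)=\im(f)$ together with the monomorphism property of $f$ in the second. One minor point deserves recording: the hypothesis tacitly forces $X\neq\emptyset$, since an \emph{epimorphism} onto the Rees quotient $Y/X$ requires $\im(f)\neq\emptyset$, and it is precisely this non-emptiness that guarantees the existence of the witnesses $x,x'$ in the final clause.
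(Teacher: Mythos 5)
Your proof is correct and takes essentially the same route as the paper's: both exploit that the Rees congruence has singleton classes off $\im(f)$ and collapses $\im(f)$ to a point, apply $2$-purity to the pair $\pi(y),\pi(y')$ with the single relation $\pi(y)s=\pi(y')t$ to force $ys=y't$, and then apply $2$-purity to two copies of the zero class to lift $0\cdot s=0\cdot t$ back to $\im(f)$, finishing with injectivity of $f$. The paper merely compresses the fibre bookkeeping you spell out into ``it easily follows.''
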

\begin{proof}
Let $\rho=\im(f)\times\im(f)\cup 1_Y$ so that $Y/X=Y/\rho$. Let $y,y'\in Y\backslash\im(f)$ and suppose that $ys,y't\in\im(f)$. Then $ys\;\rho\;y't$ and so by assumption it easily follows that $ys=y't$ as required.

Let $x\in X$ so that $f(x)s\;\rho\;f(x)t$. Then there exists $x_1,x_2\in X$ with
$$
f(x_1)\;\rho\;f(x)\;\rho\;f(x_2)\text{ and }f(x_1)s=f(x_2)t
$$ Hence $x_1s=x_2t$ as required.
\end{proof}

It then follows from~\cite[Theorem 4.1 and Theorem 4.3]{renshaw-02} that if $Y\to Y/X$ is a pure epimorphism then $f:X\to Y$ is a pure monomorphism. In fact following the remark after the proof of~\cite[Theorem 4.1]{renshaw-02} we see that $f$ splits. In addition we see from ~\cite[Theorem 4.22]{renshaw-02} that if every epimorphism is pure then $S$ is a group. Actually, from Theorem~\ref{stenstrom-theorem} we see that all $S-$acts are strongly flat and so $S$ is the trivial group.

\begin{theorem}
Let $S$ be a monoid, let $f:X\to Y$ be a monomorphism and suppose that $Y\to Y/X$ is a $1-$pure epimorphism. Then $f$ is $E-$unitary. Moreover for all $s,t\in S$ there exists $x\in X$ with $xs=xt$.
\end{theorem}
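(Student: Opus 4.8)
The plan is to follow the proof of the preceding theorem almost verbatim, replacing the two-element relation suited to condition $(P)$ by a single-element relation suited to condition $(E)$, and invoking $1$-purity where that argument used $2$-purity. As there, I would first make the quotient explicit: put $\rho=\im(f)\times\im(f)\cup 1_Y$, so that $Y/X=Y/\rho$ is the act obtained by collapsing $\im(f)$ to a single class, which I denote $\star$, while fixing every other element. Two facts about $\rho$ drive everything: for $y\notin\im(f)$ the class $y\rho$ is the singleton $\{y\}$, and $\star$ is a fixed point of the action on $Y/\rho$ because $\im(f)$ is a subact.

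To show $f$ is \emph{$E$-unitary}, I would take $y\in Y\backslash\im(f)$ and $s,t\in S$ with $ys,yt\in\im(f)$. Since both lie in $\im(f)$, they are $\rho$-related, so in $Y/\rho$ the single element $y\rho$ satisfies the relation $(y\rho)s=(y\rho)t$. Applying $1$-purity of the natural epimorphism $Y\to Y/\rho$ to this element and relation yields $w\in Y$ with $w\rho=y\rho$ and $ws=wt$. Because $y\notin\im(f)$ its class is $\{y\}$, which forces $w=y$ and hence $ys=yt$, exactly the $E$-unitary condition.

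For the final clause I would again lift through $1$-purity, this time from the collapsed point. Fixing any $x\in X$, both $f(x)s$ and $f(x)t$ lie in $\im(f)$, so $f(x)s\;\rho\;f(x)t$ and in $Y/\rho$ we have $\star s=\star t$. Then $1$-purity gives $w\in Y$ with $w\rho=\star$ and $ws=wt$; since $w\rho=\star$ means $w\in\im(f)$, I may write $w=f(x_1)$ for some $x_1\in X$, and then $f(x_1)s=f(x_1)t$ yields $x_1s=x_1t$ because $f$ is a monomorphism.

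The verifications that $\rho$ is the congruence defining $Y/X$ and that $\star$ is a fixed point are routine. The one delicate point, and the analogue of the ``$w=y$, $w'=y'$'' step of the $2$-pure proof, is the identification after lifting: I must be certain that a lift of $y\rho$ with $y\notin\im(f)$ is literally $y$ rather than merely $\rho$-equivalent to it, and this is precisely what the singleton description of non-collapsed classes provides. With that in hand the whole argument is the $P$-unitary proof with $n=1$ in place of $n=2$.
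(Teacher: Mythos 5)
Your proof is correct, and at the level of strategy it matches the paper: both arguments write $Y/X=Y/\rho$ with $\rho=\im(f)\times\im(f)\cup 1_Y$, both exploit the fact that the $\rho$-class of any $y\notin\im(f)$ is the singleton $\{y\}$ while the class of $\im(f)$ is a single fixed point, and your treatment of the final clause (lift the collapsed point by $1$-purity, write the lift as $f(x_1)$, cancel the monomorphism $f$) is exactly the paper's. Where you genuinely diverge is in the $E$-unitary half, and your version is the better one. You apply $1$-purity precisely as defined: lift the element $y\rho$ together with the relation $(y\rho)s=(y\rho)t$ to some $w\in Y$ with $w\rho=y\rho$ and $ws=wt$, then conclude $w=y$ from the singleton description. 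The paper instead asserts at this point that there exist $z\in Y$ and $u\in S$ with $y\rho=(z\rho)u$ and $us=ut$, a condition-$(E)$-style factorisation in $Y/\rho$. That assertion is not an instance of $1$-purity and does not follow from the hypotheses: Proposition~\ref{1-pure-proposition} would require \emph{every} epimorphism onto $Y/X$ to be $1$-pure, and Proposition~\ref{ce-pure-proposition} would require $Y$ itself to satisfy condition $(E)$. Indeed the intermediate claim can fail even though the theorem's conclusion holds: take $S=(\n\cup\{0\},+)$, $Y=\{a,b\}$ with $a\cdot 0=a$, $a\cdot n=b$ for $n\ge1$ and $b\cdot n=b$ for all $n$, and $X=\{b\}$; then $\rho=1_Y$, so $Y\to Y/X$ is an isomorphism and hence $1$-pure, and $a\cdot1=a\cdot2=b\in\im(f)$, yet no $u\in S$ satisfies $u+1=u+2$. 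So your direct lifting argument is not merely a rephrasing of the paper's proof; it repairs a step that, as written there, is faulty, and it yields a proof using nothing beyond the stated hypothesis.
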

\begin{proof}
Let $\rho=\im(f)\times\im(f)\cup 1_Y$ so that $Y/X=Y/\rho$. Let $y\in Y\setminus\im(f), s,t\in S$ and suppose that $ys,yt\in\im(f)$. Then $(y\rho)s=(y\rho)t$ and so there exists $z\in Y, u\in S$ with $y\rho=(z\rho)u$ and $us=ut$. Hence $y=zu$ and so $ys=yt$ as required.

Let $x\in X$ so that $f(x)\rho s=f(x)\rho t$. Then there exists $y\in Y$ with $y\rho=f(x)\rho$ and $ys=yt$. Hence $y=x_1$ for some $x_i\in X$ and so $x_1s=x_1t$ as required.
\end{proof}

\begin{theorem}
Let $S$ be a monoid, let $f:X\to Y$ be a monomorphism and suppose that $Y\to Y/X$ is a split epimorphism. Then $f$ is $P-$unitary. Moreover for all $s,t\in S$ there exists $x\in X$ with $xs=xt$.
\end{theorem}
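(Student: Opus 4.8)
The plan is to exploit the fact, already recorded in the text, that a split epimorphism is pure, and that pure implies $2$-pure implies $1$-pure; the two preceding theorems then do almost all the work. Since $Y\to Y/X$ splits it is in particular $2$-pure, so the theorem on $2$-pure epimorphisms gives immediately that $f$ is $P$-unitary, settling the first assertion. The split map is also $1$-pure, so the theorem on $1$-pure epimorphisms already yields, for all $s,t\in S$, an element $x\in X$ with $xs=xt$; this gives the ``moreover'' clause for free.

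Nevertheless the split hypothesis yields something sharper and more transparent, and it is worth giving the direct construction. Write $\rho=\im(f)\times\im(f)\cup 1_Y$, so that $Y/X=Y/\rho$, and (assuming $X\neq\emptyset$, which is implicit in the claim that an element of $X$ exists) let $*$ denote the class of $\im(f)$ in $Y/\rho$. Then $*$ is a fixed point of $Y/\rho$: for any $a\in\im(f)$ and any $s\in S$ we have $as\in\im(f)$, whence $(a\rho)s=(as)\rho=a\rho$, i.e. $*s=*$.

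Let $\iota:Y/\rho\to Y$ be a splitting of the quotient map $\pi$, so $\pi\iota=1_{Y/\rho}$, and set $y_0=\iota(*)$. Because $\iota$ is an $S$-map and $*$ is fixed, $y_0s=\iota(*)s=\iota(*s)=\iota(*)=y_0$ for every $s\in S$, and $\pi(y_0)=*$ forces $y_0\in\im(f)$; write $y_0=f(x)$. From $f(xs)=f(x)s=y_0s=y_0=f(x)$ and the injectivity of $f$ we get $xs=x$ for all $s\in S$, so $xs=x=xt$ for all $s,t\in S$, with a single $x$ serving every pair. The argument is essentially bookkeeping, and the only points needing care are the two-step transfer of the fixed-point property, first from $*$ to $y_0$ via the fact that $\iota$ is an $S$-map, and then from $y_0$ back to $x$ via the monomorphism $f$; I would also flag the nonemptiness hypothesis on $X$, since when $X=\emptyset$ the map $Y\to Y/\rho$ is the identity and the ``moreover'' clause is vacuous.
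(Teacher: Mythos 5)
Your proposal is correct, but it takes a genuinely different route from the paper's. The paper proves the theorem directly from the splitting: with $\rho=\im(f)\times\im(f)\cup 1_Y$ and splitting map $g:Y/X\to Y$, it notes that $g(y\rho)=y$ whenever $y\notin\im(f)$, so applying $g$ to $(y\rho)s=(y'\rho)t$ yields $ys=y't$ at once ($P$-unitarity), and applying $g$ to $(f(x)\rho)s=(f(x)\rho)t$ yields the moreover clause because $g(f(x)\rho)$ must lie in $\im(f)$. You instead observe that the statement is a formal consequence of the two theorems that precede it, via the implications split $\Rightarrow$ pure $\Rightarrow$ $2$-pure $\Rightarrow$ $1$-pure already recorded in the paper: the $2$-pure theorem gives $P$-unitarity, and the $1$-pure theorem gives exactly the single-element moreover clause. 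This reduction is legitimate (both cited theorems and both implications appear earlier in Section 3) and it buys economy and a clearer logical picture of how the three theorems relate, at the cost of not being self-contained. Your supplementary direct argument is also correct, and is in essence the paper's second paragraph sharpened: you identify the class $*$ of $\im(f)$ as a zero of $Y/X$, note that the splitting must send $*$ to an element $y_0=f(x)\in\im(f)$ which is then itself fixed by every $s\in S$, and conclude $xs=x=xt$ for all $s,t$ with a single $x$; this fixed-point formulation proves slightly more than the stated moreover clause (the paper's element $x_1$ with $f(x_1)=g(f(x)\rho)$ is the same element, but the paper never remarks that it is a fixed point). Your caveat about $X\neq\emptyset$ is fair but applies equally to the paper's own proof, which likewise begins by choosing $x\in X$.
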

\begin{proof}
Let $\rho=\im(f)\times\im(f)\cup 1_Y$ so that $Y/X=Y/\rho$. Let $g:Y/X\to Y$ be the splitting map. Notice that if $y\not\in\im(f)$ then $g(y\rho)=y$. Let $y,y'\in Y\setminus\im(f), s,t\in S$ and suppose that $ys,y't\in\im(f)$. Then $(y\rho)s=(y'\rho)t$ and so $g(y\rho)s=g(y'\rho)t$. Consequently $ys=y't$ as required.

Let $x\in X$ so that $f(x)\rho s=f(x)\rho t$. Then $g(f(x)\rho)s=g(f(x)\rho)t$ and so there exists $x_1\in X$ with $g(f(x)\rho) = f(x_1)$ and so $x_1s=x_1t$ as required.
\end{proof}
\medskip

\section{Covers and Precovers}

Let $S$ be a monoid, and $A$ be an $S-$act. Unless otherwise stated, in the rest of this section, $\X$ will be a class of $S-$acts closed under isomorphisms. By an $\X$-{\em precover} of $A$ we mean an $S-$map $g: P\to A$ for some $P\in \X$ such that
for every $S-$map $g':P'\to A$, for $P'\in \X$, there exists an $S-$map $f:P'\to P$ with $g'=gf$.
$$
\begin{tikzpicture}[description/.style={fill=white,inner sep=2pt}]
\matrix (m) [matrix of math nodes, row sep=3em,
column sep=2.5em, text height=1.5ex, text depth=0.25ex]
{P & A\\ 
&P'\\};
\path[->,font=\scriptsize]
(m-2-2) edge node[auto,below left] {$f$} (m-1-1)
(m-2-2) edge node[auto, right] {$g'$} (m-1-2)
(m-1-1) edge[->] node[auto,above] {$g$} (m-1-2);
\end{tikzpicture}
$$

If in addition the precover satisfies the condition that each $S-$map $f:P\to P$ with $gf=g$ is an isomorphism, then we shall call it an {\em $\X-$cover}. We shall of course frequently identify the (pre)cover with its domain. Obviously an $S-$act, $A$, is an $\X$-cover of itself if and only if $A \in \X$.
Note that this definition of cover is different from that given in~\cite{renshaw-08}.

\begin{theorem}[{\cite[Theorem 5.8]{renshaw-08}}]
Let $S$ be a monoid. If $g_1 : X_1\to A$ and $g_2:X_2\to A$ are both ${\X}-$covers of an $S-$act $A$ then there is an isomorphism $h : X_1\to X_2$ such that $g_2h=g_1$.
\end{theorem}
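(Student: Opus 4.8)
The plan is to run the standard two-sided comparison argument for covers, using only the precover lifting property together with the endomorphism (cover) condition. First I would exploit that $g_1$ is in particular a precover: applying its defining property to the map $g_2:X_2\to A$ (legitimate since $X_2\in\X$) yields a map $f_1:X_2\to X_1$ with $g_1 f_1=g_2$. Symmetrically, since $g_2$ is a precover, applying it to $g_1:X_1\to A$ produces $f_2:X_1\to X_2$ with $g_2 f_2=g_1$.

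Next I would form the two composites and feed them into the cover condition. Since $g_1(f_1 f_2)=(g_1 f_1)f_2=g_2 f_2=g_1$, the endomorphism $f_1 f_2:X_1\to X_1$ satisfies $g_1(f_1f_2)=g_1$, so the cover property of $g_1$ forces $f_1 f_2$ to be an isomorphism. In exactly the same way $g_2(f_2 f_1)=(g_2 f_2)f_1=g_1 f_1=g_2$, so the cover property of $g_2$ forces $f_2 f_1$ to be an isomorphism.

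It then remains to deduce that $f_2$ itself is an isomorphism. This is the purely formal step: writing $u=f_1f_2$ and $v=f_2f_1$ for the two isomorphisms just obtained, the map $u^{-1}f_1$ is a left inverse of $f_2$ (since $u^{-1}f_1 f_2 = u^{-1}u = 1_{X_1}$) and $f_1 v^{-1}$ is a right inverse of $f_2$ (since $f_2 f_1 v^{-1} = v v^{-1} = 1_{X_2}$); a morphism possessing both a left and a right inverse is invertible, so $f_2$ is an isomorphism. Taking $h=f_2:X_1\to X_2$ gives an isomorphism with $g_2 h=g_1$, as required.

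I do not expect a genuine obstacle here: the whole argument is a diagram chase valid in any category, and the only substantive ingredient is the cover's endomorphism condition, invoked once on each side. The single point to state carefully is the last paragraph's two-sided-inverse lemma, which is what upgrades "the composites are isomorphisms" to "each factor is an isomorphism"; everything else is routine bookkeeping with the relations $g_1 f_1=g_2$ and $g_2 f_2=g_1$.
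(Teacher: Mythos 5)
Your proof is correct, and every step checks out (including the direction conventions: $f_1:X_2\to X_1$ with $g_1f_1=g_2$, $f_2:X_1\to X_2$ with $g_2f_2=g_1$, the cover condition applied to the endomorphisms $f_1f_2$ and $f_2f_1$, and the formal two-sided-inverse lemma upgrading invertibility of the composites to invertibility of $f_2$). Note that the paper itself gives no proof of this statement --- it is quoted from \cite{renshaw-08} --- and your argument is precisely the standard category-theoretic one used for uniqueness of covers, so there is nothing to flag beyond observing that your purely formal final step is valid in any category, whereas one could alternatively argue concretely in acts (split mono plus split epi implies bijective).
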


\smallskip

\begin{theorem}[{\cite[Theorem 5.7]{renshaw-08}}]
Let $S$ be a monoid. An $S-$map $g:P\to A$, with $P\in \Pr$, is a $\Pr$-cover of $A$ if and only if it is a projective cover.
\end{theorem}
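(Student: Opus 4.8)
The plan is to prove the two implications separately, in each case exploiting the lifting property of projective acts together with the ``no proper subact onto'' clause in the definition of projective cover.

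First I would treat the forward direction, that a $\Pr$-cover $g:P\to A$ is a projective cover. Since $g$ is in particular a $\Pr$-precover and every $S$-act admits an epimorphism $F\to A$ from a free (hence projective) act $F$, this epimorphism factors as $g f$ through $g$, and surjectivity of $F\to A$ forces $g$ to be onto. For minimality, suppose toward a contradiction that $B\subsetneq P$ is a proper subact with $g|_B$ onto. Applying projectivity of $P$ to the epimorphism $g|_B:B\to A$ and the map $g:P\to A$ yields an $S$-map $f:P\to B$ with $(g|_B)f=g$, so that $\iota f:P\to P$ (with $\iota:B\hookrightarrow P$ the inclusion) satisfies $g(\iota f)=(g|_B)f=g$. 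The cover condition then forces $\iota f$ to be an isomorphism, whence $P=\im(\iota f)\subseteq B$, contradicting properness. Thus $g$ is a projective cover.

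For the converse, that a projective cover is a $\Pr$-cover, the precover property is immediate: given any $g':P'\to A$ with $P'\in\Pr$, projectivity of $P'$ applied to the epimorphism $g$ produces a lift $f:P'\to P$ with $gf=g'$. It remains to verify the endomorphism condition. Given $f:P\to P$ with $gf=g$, I would first note that $g|_{\im f}$ is onto: for $a\in A$ choose $p$ with $g(p)=a$, so $a=g(p)=g(f(p))$ with $f(p)\in\im f$. Minimality then gives $\im f=P$, i.e.\ $f$ is surjective.

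The main obstacle is upgrading surjectivity of $f$ to bijectivity, since a surjective endomorphism of a projective act need not be injective. Here I would invoke projectivity a second time: as $f:P\to P$ is an epimorphism and $P$ is projective, $f$ splits, giving $s:P\to P$ with $fs=1_P$; this $s$ is automatically injective. Multiplying $gf=g$ on the right by $s$ yields $gs=gfs=g$, so $g|_{\im s}$ is again onto, and a second application of minimality forces $\im s=P$. Hence $s$ is a bijective $S$-map, therefore an isomorphism, and from $fs=1_P$ we conclude $f=s^{-1}$ is an isomorphism, as required. The only subtle point is recognising that the minimality condition must be used twice---once on $\im f$ to get surjectivity of $f$, and once on the image of the splitting $s$ to recover injectivity---rather than expecting bijectivity to follow from surjectivity alone.
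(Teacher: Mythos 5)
Your proof is correct. Note that the paper itself contains no proof of this statement---it is quoted from \cite{renshaw-08}---so there is nothing internal to compare against; your argument (factoring a free-act epimorphism through $g$ to get surjectivity, lifting $g$ along $g|_B$ to contradict minimality, and in the converse direction splitting the surjective endomorphism $f$ via projectivity and applying minimality a second time to the section $s$) is the standard one, and it is complete.
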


It was demonstrated in~\cite{renshaw-08} that the previous result is not true for condition $(P)$. We show in Section 5 that it is also false for strongly flat acts.

\medskip

Recall from \cite[Theorem II.3.16]{kilp-00} that an $S-$act $G$ is called a {\em generator} if there exists an $S-$epimorphism $G \to S$.

\begin{proposition}
Let $S$ be a monoid and let $\X$ be a class of $S-$acts which contains a generator $G$. If $g:C \rightarrow A$ is an $\X$-precover of $A$ then $g$ is an epimorphism.
\end{proposition}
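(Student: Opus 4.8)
The plan is to show that every element of $A$ lies in the image of $g$, exploiting the fact that maps out of the free act $S$ correspond to elements of the target. The key observation is that a generator, by definition, surjects onto $S$, so any map $S \to A$ can be lifted to a map $G \to A$, and the latter lives in $\X$ and hence factors through the precover $g$.

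Concretely, I would fix an arbitrary element $a \in A$ and define the $S$-map $\phi_a : S \to A$ by $\phi_a(s) = as$; this is a well-defined right $S$-map with $\phi_a(1) = a$. Since $G$ is a generator, there is an $S$-epimorphism $p : G \to S$, and so $\phi_a p : G \to A$ is an $S$-map whose domain $G$ lies in $\X$. Because $g : C \to A$ is an $\X$-precover, applying the defining property to $g' = \phi_a p : G \to A$ yields an $S$-map $f : G \to C$ with $g f = \phi_a p$.

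To finish, I would use the surjectivity of $p$: choose $x \in G$ with $p(x) = 1$. Then
$$
g(f(x)) = \phi_a(p(x)) = \phi_a(1) = a,
$$
so $a \in \im(g)$. As $a$ was arbitrary, $g$ is onto, which is what was required.

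I do not anticipate a genuine obstacle here; the argument is essentially a one-step factorization through the generator. The only point requiring a little care is the verification that $\phi_a$ is a bona fide $S$-map (so that the precover property applies) and that $G \in \X$ guarantees $\phi_a p$ is an admissible test map in the definition of precover. Both are immediate from the hypotheses, so the proof should be short.
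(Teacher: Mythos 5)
Your proof is correct and is essentially identical to the paper's: both compose the map $s \mapsto as$ with the epimorphism $G \to S$ coming from the generator, factor this through the precover, and evaluate at a preimage of $1$ to recover $a$ in the image of $g$. No issues.
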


\begin{proof}
Let $h:G \to S$ be an $S-$epimorphism. Then there exists an $x \in G$ such that $h(x)=1$. For all $a \in A$ define the $S-$map $\lambda_a:S \rightarrow A$ by $\lambda_a(s)=as$. By the $\X$-precover property there exists an $S-$map $f:G \rightarrow C$ such that $gf=\lambda_a h$. Hence $g(f(x))=a$ and so $\im(g)=A$ and $g$ is epimorphic.
\end{proof}

Obviously if every $S-$act has an epimorphic $\X-$precover, then $S$ has an epimorphic $\X-$precover, which by definition is then a generator in $\X$, so we have the following corollary.

\begin{corollary}
Let $S$ be a monoid and $\X$ a class of $S-$acts such that every $S-$act has an $\X-$precover. Then every $S-$act has an epimorphic $\X-$precover if and only if $\X$ contains a generator.
\end{corollary}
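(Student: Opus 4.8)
The plan is to prove the two implications separately, each of which follows almost immediately from material already established; throughout I keep the standing hypothesis that every $S-$act admits an $\X-$precover. For the `if' direction, suppose $\X$ contains a generator $G$. The Proposition just proved tells us that under exactly this hypothesis \emph{every} $\X-$precover of \emph{any} $S-$act is automatically an epimorphism. Combining this with the standing assumption that every $S-$act has an $\X-$precover, I conclude at once that every $S-$act has an epimorphic $\X-$precover.

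For the `only if' direction, suppose every $S-$act has an epimorphic $\X-$precover. The key step is simply to apply this hypothesis to the $S-$act $S$ itself: there is an epimorphic $\X-$precover $g:P\to S$ with $P\in\X$. Since $g$ is an $S-$epimorphism from $P$ onto $S$, the very definition of a generator (an $S-$act admitting an $S-$epimorphism onto $S$) shows that $P$ is a generator lying in $\X$. Hence $\X$ contains a generator, completing the proof.

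There is essentially no obstacle here: the entire content has been front-loaded into the Proposition and into the definition of generator, so the argument is a bookkeeping exercise rather than a genuine proof. The only point worth flagging---the mild `trick'---is that in the forward direction one tests the epimorphic-precover hypothesis against the specific object $A=S$, which is precisely the object whose relationship to $\X$ detects the presence of a generator. If I wanted to guard against a subtle gap, the one thing I would double-check is that the epimorphic precover of $S$ really does have its domain in $\X$ (it does, by definition of precover), since that membership is what lets the resulting epimorphism onto $S$ witness a generator \emph{inside} $\X$ rather than merely in the ambient category.
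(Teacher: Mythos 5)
Your proof is correct and matches the paper's own argument exactly: the `if' direction follows from the preceding Proposition (a generator in $\X$ forces every $\X$-precover to be epimorphic), and the `only if' direction applies the hypothesis to $A=S$ itself, so the epimorphic precover $P\to S$ with $P\in\X$ is by definition a generator in $\X$. No gaps; the only cosmetic slip is that near the end you call the `only if' direction the ``forward direction,'' but the mathematics is unambiguous.
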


Note that for any class of $S-$acts containing $S$ then $S$ is a generator in $\X$ and so $\X$-precovers are always epimorphic. In particular this is true for the classes $\Pr, \SF, \CP$ and $\F$.

\begin{lemma} \label{disjoint-hom}
Let $S$ be a monoid and let $h:X \rightarrow A$ be a homomorphism of $S-$acts where $A=\dot\bigcup_{i \in I}A_i$ is a coproduct of non-empty subacts $A_i \subseteq A$. Then there exists $J\subseteq I$ and $X_j\subseteq X$ for each $j\in J$ such that $X=\dot\bigcup_{j \in J}X_j$ and $\im(h|_{X_j}) \subseteq A_j$ for each $j \in J$. Moreover, if $h$ is an epimorphism, then $J=I$.
\end{lemma}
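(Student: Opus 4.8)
The plan is to pull back the given decomposition of $A$ along $h$. For each $i\in I$ set $X_i=h^{-1}(A_i)=\{x\in X : h(x)\in A_i\}$. The first thing I would check is that each $X_i$ is an $S-$subact of $X$: if $x\in X_i$ and $s\in S$, then $h(xs)=h(x)s\in A_is\subseteq A_i$, using that $A_i$ is a subact, so $xs\in X_i$. Thus each $X_i$ is a (possibly empty) subact of $X$.

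Next I would argue that the family $(X_i)_{i\in I}$ partitions $X$ as a set. Since $A=\dot\bigcup_{i\in I}A_i$ is a coproduct, every element $a\in A$ lies in exactly one $A_i$; applying this to $a=h(x)$ shows that each $x\in X$ lies in exactly one $X_i$. Hence the $X_i$ are pairwise disjoint and their union is $X$. Discarding the empty members, I would set $J=\{i\in I : X_i\ne\emptyset\}$. Then $X=\dot\bigcup_{j\in J}X_j$ is a coproduct of non-empty subacts, and by the very definition $X_j=h^{-1}(A_j)$ we have $\im(h|_{X_j})\subseteq A_j$ for each $j\in J$, which is the assertion.

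For the final clause, suppose $h$ is an epimorphism and fix an arbitrary $i\in I$. Since $A_i$ is non-empty, choose $a\in A_i$; by surjectivity there is $x\in X$ with $h(x)=a\in A_i$, so $x\in X_i$ and $X_i\ne\emptyset$. Thus every index of $I$ survives and $J=I$.

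I do not anticipate a substantive obstacle: the argument is almost entirely routine. The only point that requires any care is that $X=\dot\bigcup_{j\in J}X_j$ is a genuine coproduct of subacts and not merely a set-theoretic partition, and this is precisely what the subact check of the first step secures, the disjointness and covering being inherited directly from the coproduct structure of $A$.
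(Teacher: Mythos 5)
Your proposal is correct and follows essentially the same route as the paper: define $X_i=h^{-1}(A_i)$, verify each is a subact, note that disjointness and covering are inherited from the coproduct decomposition of $A$, take $J$ to be the set of indices with $X_i\neq\emptyset$, and use surjectivity of $h$ together with non-emptiness of each $A_i$ for the final clause. No gaps.
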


\begin{proof}
For each $i\in I$ let $X_i=\{x \in X : h(x) \in A_i\}$ and define $J=\{i \in I : X_i \neq \emptyset\}$. For all $x_j \in X_j$, $s \in S$, $h(x_j s)=h(x_j)s \in A_j$ and so $x_j s \in X_j$ and $X_j$ is a subact of $X$. Since $A_j$ are disjoint and $h$ is a well defined $S-$map, $X_j$ are also disjoint and $X=\dot\bigcup_{j \in J}X_j$. Clearly $\im(h|_{X_j})\subseteq A_j$ for each $j \in J$. If $h$ is an epimorphism then none of the $X_i$ are empty and so $J=I$.
\end{proof}

\begin{proposition}
Let $S$ be a monoid and let $\mathcal{X}$ satisfy the property that  for each $i \in I$, $\dot\bigcup_{i \in I}X_i \in \mathcal{X} \Leftrightarrow X_i \in \mathcal{X}$. Then each $A_i$ has an $\mathcal{X}$-precover if and only if  $\dot\bigcup_{i \in I}A_i$ has an $\mathcal{X}$-precover.
\end{proposition}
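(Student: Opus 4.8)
The plan is to prove the two implications separately, in both cases using Lemma~\ref{disjoint-hom} to split a homomorphism along the decomposition $A=\dot\bigcup_{i\in I}A_i$, together with the hypothesis that membership of $\mathcal{X}$ is detected componentwise on coproducts.

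For the forward implication, suppose each $A_i$ has an $\mathcal{X}$-precover $g_i:P_i\to A_i$ with $P_i\in\mathcal{X}$. I would set $P=\dot\bigcup_{i\in I}P_i$ and $g=\dot\bigcup_{i\in I}g_i:P\to A$; by the hypothesis on $\mathcal{X}$ we have $P\in\mathcal{X}$. To verify the precover property, take any $g':P'\to A$ with $P'\in\mathcal{X}$. By Lemma~\ref{disjoint-hom} there is a decomposition $P'=\dot\bigcup_{j\in J}P'_j$ with $\im(g'|_{P'_j})\subseteq A_j$, and again the hypothesis gives $P'_j\in\mathcal{X}$ for each $j$. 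Now for each $j$ I would apply the precover property of $g_j$ to the corestriction $g'|_{P'_j}:P'_j\to A_j$ to obtain $f_j:P'_j\to P_j$ with $g_jf_j=g'|_{P'_j}$, and then assemble these into a single $S$-map $f:P'\to P$ using the coproduct inclusions $P_j\hookrightarrow P$. A routine check on each summand shows $gf=g'$, so $g$ is an $\mathcal{X}$-precover of $A$.

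For the reverse implication, suppose $g:P\to A$ is an $\mathcal{X}$-precover with $P\in\mathcal{X}$, and fix $i\in I$. I would put $P_i=g^{-1}(A_i)$, which is a subact of $P$; Lemma~\ref{disjoint-hom} exhibits $P$ as the coproduct of the non-empty such preimages, so the hypothesis on $\mathcal{X}$ yields $P_i\in\mathcal{X}$ (when $P_i$ is empty it lies in $\mathcal{X}$ as the empty coproduct). I then claim $g|_{P_i}:P_i\to A_i$ is an $\mathcal{X}$-precover. Given $g':P'\to A_i$ with $P'\in\mathcal{X}$, I would compose with the inclusion $A_i\hookrightarrow A$ and use the precover property of $g$ to get $f:P'\to P$ with $gf$ equal to this composite. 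The crucial observation is that $gf$ has image in $A_i$, so $f$ maps $P'$ into $g^{-1}(A_i)=P_i$; regarding $f$ as a map $P'\to P_i$ then gives $(g|_{P_i})f=g'$, as required.

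The genuinely substantive steps are these two factoring arguments. In the forward direction the work is in checking that the piecewise-defined $f$ is a well-defined $S$-map with $gf=g'$, which is routine given the disjointness supplied by Lemma~\ref{disjoint-hom}. In the reverse direction the key point, and the step I expect to need the most care, is the observation that any lift $f$ of a map landing in $A_i$ is forced to factor through the subact $g^{-1}(A_i)$; this is what lets the single precover of the coproduct restrict to a precover of each summand. The only edge case to keep in mind is when $P_i$ is empty, which handles itself: the same factoring argument then shows that the only $\mathcal{X}$-act admitting a map to $A_i$ is the empty act, so the empty map is vacuously an $\mathcal{X}$-precover.
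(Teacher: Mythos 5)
Your proof is correct and follows essentially the same route as the paper's: the forward direction builds the precover of $\dot\bigcup A_i$ as the coproduct of the $g_i$ and splits an arbitrary $g':P'\to A$ via Lemma~\ref{disjoint-hom}, while the reverse direction restricts a precover $g:P\to A$ to the preimages $g^{-1}(A_i)$ and observes that any lift of a map into $A_i$ must land in that preimage. Your explicit treatment of the empty-preimage edge case is a small addition the paper leaves implicit, but it does not change the argument.
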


\begin{proof}
For each $i\in I$, let $g_i:C_i \rightarrow A_i$ be an $\X$-precover of $A_i$. Define $g:\dot\bigcup_{i \in I}C_i \rightarrow \dot\bigcup_{i \in I}A_i$ to be the obvious induced map where $g|_{C_i} = g_i$ for each $i \in I$. We claim this is an $\X$-precover of $\dot\bigcup_{i \in I}A_i$. Let $X \in \X$ and let $h: X \rightarrow \dot\bigcup_{i \in I}A_i$. By Lemma \ref{disjoint-hom}, there is a subset $J \subseteq I$ such that $X=\dot\bigcup_{j \in J}X_j$ and $\im(h|_{X_j}) \subseteq A_j$ for each $j \in J$. Now by the hypothesis $X_j \in \X$ so since $C_j$ is an $\X$-precover of $A_j$, there exists $f_j \in $ Hom$_S(X_j,C_j)$ such that $h|_{X_j}=g_j f_j$. So define $f:\dot\bigcup_{j \in J}X_j \rightarrow \dot\bigcup_{i \in I}C_i$ to be the obvious induced map with $f|_{X_j}=f_j$ for each $j \in J$ and clearly $gf=h$.

\smallskip

Conversely let $g:C \rightarrow \dot\bigcup_{i \in I}A_i=A$ be an $\X$-precover of $A$. Let $i\in I$ and define $C_i=\{c \in C : g(c) \in A_i\}$, and let $g_i=g|_{C_i}$. Suppose that $X$ is an $S-$act and suppose that $h \in$ Hom$_S(X,A_i)$. Then clearly $h \in$ Hom$_S(X,A)$ and so by the $\X$-precover property there exists an $f \in$ Hom$_S(X,C)$ such that $h=gf$. In fact $g(f(X)) =h(X) \subseteq A_i$ and so $f \in$ Hom$_S(X,C_i)$ and $h_i=g_if$. By the hypothesis, $C_i \in \X$ and hence $g_i:C_i \rightarrow A_i$ is an $\X$-precover of $A_i$.
\end{proof}

By Lemma~\ref{coproduct-decompose}, the classes $\Pr, \SF, \CP$ and $\F$  all satisfy this property and so for any of these classes, to show that all $S-$acts have $\X$-precovers it is enough to show that the indecomposable $S-$acts have $\X$-precovers.

\begin{lemma} \label{no-map}
Let $S$ be a monoid. The one element $S-$act $\Theta_S$ has an $\X$-precover if and only if there exists an $S-$act $A \in \X$ such that Hom$_S(X,A)\neq\emptyset$ for all $X \in \X$.
\end{lemma}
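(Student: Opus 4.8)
The plan is to reduce the statement to a near-triviality by observing that $\Theta_S$ is a terminal object in the category of $S$-acts: for every $S$-act $B$ there is exactly one $S$-map $B\to\Theta_S$, namely the constant map sending everything to the single element. The upshot is that in the precover diagram for the special case $A=\Theta_S$, the commutativity condition $g'=gf$ imposes no constraint at all, since $g'$ and $gf$ are both $S$-maps into $\Theta_S$ and hence automatically equal. So an $\X$-precover of $\Theta_S$ amounts to nothing more than an act in $\X$ that receives a morphism from every member of $\X$, which is exactly the condition in the statement.

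For the forward direction I would start with an $\X$-precover $g:A\to\Theta_S$ with $A\in\X$. Given any $X\in\X$, the unique map $g':X\to\Theta_S$ factors as $g'=gf$ for some $f:X\to A$ by the precover property, and this $f$ witnesses that Hom$_S(X,A)\neq\emptyset$. Thus the act sought in the statement is precisely the domain $A$ of the precover.

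For the converse I would take an $A\in\X$ with Hom$_S(X,A)\neq\emptyset$ for all $X\in\X$ and let $g:A\to\Theta_S$ be the unique $S$-map. To verify $g$ is a precover, I fix any $X\in\X$ together with the (unique) map $g':X\to\Theta_S$ and choose any $f\in{}$Hom$_S(X,A)$; then $gf:X\to\Theta_S$ must coincide with $g'$ by uniqueness of maps into $\Theta_S$, giving $g'=gf$ as required.

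There is essentially no obstacle: the single point to get right is the remark that maps into the one-element act are unique, which makes the factorisation condition vacuous. Everything else is bookkeeping, namely matching the act $A$ (respectively the test act $X$) with the domain $P$ (respectively the test object $P'$) in the general definition of an $\X$-precover.
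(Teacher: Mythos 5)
Your proposal is correct and follows essentially the same route as the paper: both rest on the observation that $\Theta_S$ is terminal, so the factorisation condition $g'=gf$ holds automatically for any $f\in\mathrm{Hom}_S(X,A)$. The paper writes out only the ``if'' direction (treating the forward direction as immediate, since a precover $P\to\Theta_S$ itself supplies the act $A=P$), whereas you spell out both directions; the content is the same.
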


\begin{proof}
Let $\Theta_S=\{\theta\}$, let $A \in \X$ and let $g:A \rightarrow \Theta_S$ be given by $g(a) = \theta$. Given any $S-$act $X \in \X$ with $S-$map $h:X \rightarrow \Theta_S$, clearly $gf=h$ for every $f \in $ Hom$_S(X,A)$.
\end{proof}

We now show that colimits of $\X-$precovers are $\X-$precovers. To be more precise
\begin{lemma}\label{direct-limit-covers-lemma}
Let $S$ be a monoid, let $\X$ be a class of $S-$acts closed under colimits and let $A$ be an $S-$act. Suppose that $(X_i,\phi_{i,j})$ is a direct system of $S-$acts with $X_i\in\X$ for each $i\in I$ and with colimit $(X,\alpha_i)$. Suppose also that for each $i\in I$ $f_i:X_i\to A$ is an $\X-$precover of $A$ such that for all $i\le j$, $f_j\phi_{i,j}=f_i$. Then there exists an $\X-$precover $f:X\to A$ such that $f\alpha_i = f_i$ for all $i\in I$.
\end{lemma}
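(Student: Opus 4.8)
The plan is to construct $f$ directly from the universal property of the colimit and then to verify the lifting condition by factoring through a single stage of the system.

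First I would observe that the compatibility hypothesis $f_j\phi_{i,j}=f_i$ for all $i\le j$ says precisely that the family $(f_i:X_i\to A)_{i\in I}$ is a cocone on the diagram $(X_i,\phi_{i,j})$ with vertex $A$. Applying the defining universal property of the colimit $(X,\alpha_i)$ to this cocone yields a unique $S$-map $f:X\to A$ satisfying $f\alpha_i=f_i$ for every $i\in I$, which already secures the commutativity asserted in the statement. Since $\X$ is closed under colimits and each $X_i\in\X$, we also have that $X$, being the colimit of the $X_i$, lies in $\X$; hence $f$ is at least a candidate for an $\X$-precover.

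It then remains to verify the precover lifting property for $f$. Fix any index $i\in I$ (the index set of a direct system being nonempty), and let $g':P'\to A$ be an arbitrary $S$-map with $P'\in\X$. Because $f_i:X_i\to A$ is itself an $\X$-precover of $A$, there is an $S$-map $h:P'\to X_i$ with $f_ih=g'$. Setting $f'=\alpha_i h:P'\to X$ then gives
$$
ff' = f\alpha_i h = f_i h = g',
$$
which is exactly the factorisation required by the $\X$-precover definition. Hence $f$ is an $\X$-precover of $A$ with $f\alpha_i=f_i$, as claimed.

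The argument is essentially formal, so I do not expect a genuine obstacle: the only real content is recognising that a single stage $f_i$ of the system already supplies every needed lift, with $\alpha_i$ transporting it into $X$, while closure under colimits disposes of membership in $\X$. The one point deserving (harmless) care is the tacit assumption $I\neq\emptyset$, used when choosing the index $i$; since a direct system is indexed by a nonempty preordered set this causes no difficulty, and I would simply flag it rather than treat the empty diagram as a genuine case.
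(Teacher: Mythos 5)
Your proof is correct and follows essentially the same route as the paper's: obtain $f$ from the universal property of the colimit, then lift any $g':P'\to A$ through a single stage $f_i$ and push into $X$ via $\alpha_i$. The only difference is cosmetic — you make explicit the use of closure under colimits to place $X$ in $\X$, which the paper leaves implicit.
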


\begin{proof}
We have a commutative diagram
$$
\begin{tikzpicture}[description/.style={fill=white,inner sep=2pt}]
\matrix (m) [matrix of math nodes, row sep=3em,
column sep=2.5em, text height=1.5ex, text depth=0.25ex]
{X_i& &X_j \\
 & X &\\ 
& A & \\};
\path[->,font=\scriptsize]
(m-1-1) edge node[auto, right] {$\alpha_i$} (m-2-2)
(m-1-3) edge node[auto, left] {$\alpha_j$} (m-2-2)
(m-1-1) edge node[auto, left] {$f_i$} (m-3-2)
(m-1-3) edge node[auto, right] {$f_j$} (m-3-2)
(m-1-1) edge[->] node[auto,above] {$\phi_{i,j}$} (m-1-3);
\end{tikzpicture}
$$
and so there exists a unique $S-$map $f:X\to A$ such that $f\alpha_i = f_i$ for all $i\in I$. If $F\in \X$ and if $g:F\to A$ then for each $i\in I$ there exists $h_i:F\to X_i$ such that $f_ih_i=g$. Choose any $i\in I$ and let $h:F\to X$ be given by $h=\alpha_ih_i$. Then $fh=g$ as required.
\end{proof}

The motivation for the next few results comes mainly from~\cite{xu-96}.

\begin{lemma}\label{pre-cover-lemma1}
Let $S$ be a monoid and $\X$ a class of $S-$acts closed under directed colimits. Let $A$ be an $S-$act and suppose that $k:C\to A$ is an $\X-$precover of $A$. Then there exists an $\X-$precover $\bar k:\bar C\to A$ and an $S-$map $g:C\to\bar C$ with $\bar k g = k$ such that for any $\X-$precover $k^\ast:C^\ast\to A$ and any $S-$map $h:\bar C\to C^\ast$ with $k^\ast h = \bar k$ then $h|_{\im(g)}$ is a monomorphism.
\end{lemma}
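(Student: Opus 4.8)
The plan is to build $\bar C$ as the top of a transfinite chain of $\X$-precovers of $A$, starting from $C$, in which every endomorphism that fails to be injective on the relevant image is successively ``used up''. The only operation I would allow myself is the directed colimit, since this is all that keeps us inside $\X$; the directed-colimit case of Lemma~\ref{direct-limit-covers-lemma} (applied to systems whose nodes are already precovers, so that closure under directed colimits places the colimit in $\X$) then guarantees that each stage is again an $\X$-precover of $A$. The key reduction I would exploit is that, because $\bar C$ will itself be a precover, any test map $h$ can be converted into an \emph{endomorphism} of $\bar C$ over $A$: given $k^\ast:C^\ast\to A$ and $h:\bar C\to C^\ast$ with $k^\ast h=\bar k$, the precover property yields $r:C^\ast\to\bar C$ with $\bar k r=k^\ast$, and then $rh$ satisfies $\bar k(rh)=\bar k$. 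Thus it suffices to arrange that every $S$-endomorphism of $\bar C$ over $A$ is injective on $\im(g)$.

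For the construction I would set $C_0=C$, $k_0=k$, $g_0=1_C$, and proceed recursively. Given an $\X$-precover $k_\xi:C_\xi\to A$ with an $S$-map $g_\xi:C\to C_\xi$ satisfying $k_\xi g_\xi=k$, I stop and put $\bar C=C_\xi$ if every $S$-endomorphism $e$ of $C_\xi$ with $k_\xi e=k_\xi$ is injective on $\im(g_\xi)$. Otherwise I choose one such $e_\xi$ that is \emph{not} injective on $\im(g_\xi)$ and let $C_{\xi+1}$ be the directed colimit of $C_\xi\xrightarrow{e_\xi}C_\xi\xrightarrow{e_\xi}\cdots$, with $\psi_{\xi,\xi+1}:C_\xi\to C_{\xi+1}$ the canonical map, $g_{\xi+1}=\psi_{\xi,\xi+1}g_\xi$, and $k_{\xi+1}$ induced by the $k_\xi$ (these agree along $e_\xi$ since $k_\xi e_\xi=k_\xi$). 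At limit ordinals I take the directed colimit $C_\lambda$ of $(C_\xi,\psi_{\xi,\eta})_{\xi<\lambda}$. At each stage $C_\xi\in\X$, and by Lemma~\ref{direct-limit-covers-lemma} the map $k_\xi:C_\xi\to A$ is an $\X$-precover with $k_\xi g_\xi=k$.

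Termination is controlled by the kernels $\ker g_\xi$. In the colimit of the iterates of $e_\xi$, two elements are identified by $\psi_{\xi,\xi+1}$ as soon as some power of $e_\xi$ identifies them; in particular $e_\xi(u)=e_\xi(v)$ forces $\psi_{\xi,\xi+1}(u)=\psi_{\xi,\xi+1}(v)$. Since $e_\xi$ is non-injective on $\im(g_\xi)$ there are $a,b\in C$ with $g_\xi(a)\ne g_\xi(b)$ but $e_\xi g_\xi(a)=e_\xi g_\xi(b)$, so $g_{\xi+1}(a)=g_{\xi+1}(b)$ and hence $\ker g_\xi\subsetneq\ker g_{\xi+1}$. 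As these form a chain of congruences on the fixed set $C$ that strictly increases at every successor, the process must halt at some ordinal $\mu$; I set $\bar C=C_\mu$, $\bar k=k_\mu$, $g=g_\mu$. By the stopping condition no endomorphism of $\bar C$ over $A$ is non-injective on $\im(g)$, so the reduction from the first paragraph applies: if $p,q\in\im(g)$ with $h(p)=h(q)$ then $rh(p)=rh(q)$ with $rh$ an endomorphism over $A$, forcing $p=q$, i.e. $h|_{\im(g)}$ is a monomorphism.

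The main obstacle I expect is staying inside $\X$ while forcing the required identifications: with only directed colimits available one cannot pass to quotients or coproducts, so each collapse must be realised as the directed colimit of the iterates of a \emph{single} endomorphism, and these must be marshalled one at a time into a well-ordered chain. The technical heart of the write-up is therefore the kernel-growth argument that makes this transfinite chain terminate (together with the bookkeeping ensuring $k_\xi g_\xi=k$ is preserved through successors and limits); by contrast, the passage from an arbitrary $h$ to an endomorphism of $\bar C$ is a routine application of the precover property.
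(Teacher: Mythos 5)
Your proof is correct, but it takes a genuinely different route from the paper's. The paper argues by \emph{contradiction}: assuming the lemma fails, the negation itself supplies, at every successor ordinal, a new $\X$-precover $C_\kappa$ together with a map $g_{\kappa,\kappa-1}$ that is non-injective on the relevant image; transfinite induction (with Lemma~\ref{direct-limit-covers-lemma} at limit ordinals) then produces maps $g_{\alpha,0}:C\to C_\alpha$ whose kernels strictly increase inside $C\times C$ through every ordinal, contradicting cardinality. You instead give a \emph{direct construction}, which needs two ideas absent from the paper's proof: (i) the reduction of the universally quantified test condition (over all precovers $k^\ast:C^\ast\to A$ and maps $h$) to the condition that every endomorphism of $\bar C$ over $A$ is injective on $\im(g)$ --- this uses the precover property of $\bar C$ to produce $r:C^\ast\to\bar C$ with $\bar k r=k^\ast$ and then tests $rh$; and (ii) the telescope construction, realising the collapse of a bad endomorphism $e_\xi$ as the directed colimit of its iterates $C_\xi\to C_\xi\to\cdots$, which stays in $\X$ by closure under directed colimits and remains a precover by Lemma~\ref{direct-limit-covers-lemma} since $k_\xi e_\xi=k_\xi$. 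Your identification claim in the telescope ($e_\xi(u)=e_\xi(v)$ forces identification under the cocone map) is exactly Lemma~\ref{direct-limit-lemma}, and your termination argument is the same kernel-growth-in-$C\times C$ cardinality bound that drives the paper's contradiction. What your version buys is constructiveness and an explicit statement of the endomorphism-reduction principle (familiar from the module-theoretic cover literature); what the paper's version buys is brevity, since the contradiction hypothesis hands over the non-injective maps that you have to manufacture yourself.
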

\begin{proof}
Suppose, by way of contradiction, that for all $\X-$precovers $\bar k:\bar C\to A$ and $S-$maps $g:C\to\bar C$ with $\bar k g=k$ there exists an $\X-$precover $k^\ast:C^\ast\to A$ and an $S-$map $h:\bar C\to C^\ast$ with $k^\ast h=\bar k$ and such that $h|_{\im(g)}$ is not a monomorphism. So in particular when $\bar C = C, \bar k = k$ and $g=1_C$ then there exists an $\X-$precover $k_1:C_1\to A$ and an $S-$map $g_{1,0}:C\to C_1$ with $k_1 g_{1,0}=k$ and such that $g_{1,0}|_{\im(1_C)}$ is not a monomorphism.

Now let $\kappa\ge2$ be an ordinal and suppose that for all ordinals $\alpha<\kappa$ there is an $\X-$precover $k_\alpha:C_\alpha\to A$ and $S-$maps $g_{\alpha,\beta}:C_\beta\to C_\alpha$ for $\beta<\alpha$ such that for any triple $\gamma<\delta<\alpha, g_{\alpha,\gamma}=g_{\alpha,\delta}g_{\delta,\gamma}$ and
$$
\ker(g_{1,0})\subsetneq\ldots\subsetneq\ker{(g_{\alpha,0})}\subsetneq\ldots \subseteq C\times C.
$$
We proceed by transfinite induction. First, if $\kappa$ is not a limit ordinal then on putting $\bar C=C_{\kappa-1}, \bar k=k_{\kappa-1}$ and $g=g_{\kappa-1,0}$ we deduce there exists an $\X-$precover $k_\kappa:C_\kappa\to A$ and an $S-$map $g_{\kappa,\kappa-1}:C_{\kappa-1}\to C_\kappa$ with $k_\kappa g_{\kappa,\kappa-1}=g_{\kappa-1,0}$ such that $g_{\kappa,\kappa-1}|_{\im(g_{\kappa-1,0})}$ is not a monomorphism. For $\beta<\kappa-1$ let $g_{\kappa,\beta}=g_{\kappa,\kappa-1}g_{\kappa-1,\beta}$ so that  $\ker(g_{\kappa-1,0})\subsetneq\ker(g_{\kappa,0})$ and for  $\gamma<\delta<\kappa, g_{\kappa,\gamma}=g_{\kappa,\delta}g_{\delta,\gamma}$ as required.

\smallskip

Now if $\kappa$ is a limit ordinal then let $(C_\kappa,g_{\kappa,\alpha}:C_\alpha\to C_\kappa)$ be the directed colimit of the system $(C_\alpha,g_{\alpha,\beta})$

and consider the diagram
$$
\begin{tikzpicture}[description/.style={fill=white,inner sep=2pt}]
\matrix (m) [matrix of math nodes, row sep=3em,
column sep=2.5em, text height=1.5ex, text depth=0.25ex]
{C_\beta & &C_\alpha \\
 & C_\kappa &\\ 
& A & \\};
\path[->,font=\scriptsize]
(m-1-1) edge node[auto, right] {$g_{\kappa,\beta}$} (m-2-2)
(m-1-3) edge node[auto, left] {$g_{\kappa,\alpha}$} (m-2-2)
(m-1-1) edge node[auto, left] {$k_\beta$} (m-3-2)
(m-1-3) edge node[auto, right] {$k_\alpha$} (m-3-2)
(m-2-2) edge node[auto, above right] {$k_\kappa$} (m-3-2)
(m-1-1) edge[->] node[auto,above] {$g_{\alpha,\beta}$} (m-1-3);
\end{tikzpicture}
$$
where $k_\kappa:C_\kappa\to A$ is the unique $S-$map that makes the diagram commutative. Then by Lemma~\ref{direct-limit-covers-lemma} we deduce that $k_\kappa:C_\kappa\to A$ is an $\X-$precover for $A$. In addition we see that for  $\gamma<\delta<\kappa, g_{\kappa,\gamma}=g_{\kappa,\delta}g_{\delta,\gamma}$ and that $\ker(g_{\delta,0})\subseteq\ker(g_{\kappa,0})$. But $\ker(g_{\delta,0})\subsetneq\ker(g_{\delta+1,0})\subseteq\ker(g_{\kappa,0})$ and so $\ker(g_{\delta,0})\subsetneq\ker(g_{\kappa,0})$ as required.

\smallskip

It then follows that $|C\times C|$ is greater than the cardinality of every ordinal which is a clear contradiction.
\end{proof}

\begin{lemma}\label{pre-cover-lemma2}
Let $S$ be a monoid and $\X$ a class of $S-$acts closed under directed colimits. Let $A$ be an $S-$act and suppose that $k:C\to A$ is an $\X-$precover of $A$. Then there exists an $\X-$precover $\bar k:\bar C\to A$ such that for any $\X-$precover $k^\ast:C^\ast\to A$ and any $S-$map $h:\bar C\to C^\ast$ with $k^\ast h = \bar k$ then $h$ is a monomorphism.
\end{lemma}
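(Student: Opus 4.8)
The plan is to iterate Lemma~\ref{pre-cover-lemma1} countably many times and then pass to a single directed colimit. Concretely, I would build a chain of $\X$-precovers $k_n\colon C_n\to A$ indexed by $n\in\n$ as follows. Set $C_0=C$ and $k_0=k$. Given the precover $k_n\colon C_n\to A$, apply Lemma~\ref{pre-cover-lemma1} with $k_n$ in the role of $k$ to obtain an $\X$-precover $k_{n+1}\colon C_{n+1}\to A$ together with an $S$-map $g_{n+1,n}\colon C_n\to C_{n+1}$ satisfying $k_{n+1}g_{n+1,n}=k_n$ and enjoying the crucial property that \emph{any} $S$-map $h\colon C_{n+1}\to C^\ast$ into an $\X$-precover $k^\ast\colon C^\ast\to A$ with $k^\ast h=k_{n+1}$ restricts to a monomorphism on $\im(g_{n+1,n})$. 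Putting $g_{n,n}=1_{C_n}$ and $g_{n,m}=g_{n,n-1}\cdots g_{m+1,m}$ for $m<n$ produces a direct system $(C_n,g_{n,m})$ over the directed set $\n$, and one checks by induction that $k_n g_{n,m}=k_m$ for all $m\le n$.

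Next I would take the directed colimit $(C_\omega,g_{\omega,n})$ of this system. Since each $C_n\in\X$ and $\X$ is closed under directed colimits, $C_\omega\in\X$, and Lemma~\ref{direct-limit-covers-lemma} applied to the compatible family $(k_n)$ yields an $\X$-precover $k_\omega\colon C_\omega\to A$ with $k_\omega g_{\omega,n}=k_n$ for all $n$. I then claim that $\bar C=C_\omega$, $\bar k=k_\omega$ is the required precover.

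To verify the claim, let $k^\ast\colon C^\ast\to A$ be any $\X$-precover and $h\colon C_\omega\to C^\ast$ an $S$-map with $k^\ast h=\bar k$; I must show $h$ is a monomorphism. Suppose $h(p)=h(q)$. Because $C_\omega$ is a directed colimit over $\n$, raising two indices to their maximum lets me find a single $n$ and elements $x,y\in C_n$ with $p=g_{\omega,n}(x)$ and $q=g_{\omega,n}(y)$. Consider $h'=h\,g_{\omega,n+1}\colon C_{n+1}\to C^\ast$; then $k^\ast h'=k_\omega g_{\omega,n+1}=k_{n+1}$, so by the defining property of the step $n\mapsto n+1$ the restriction $h'|_{\im(g_{n+1,n})}$ is a monomorphism. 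Since $g_{\omega,n}=g_{\omega,n+1}g_{n+1,n}$, I get $h'(g_{n+1,n}(x))=h(p)=h(q)=h'(g_{n+1,n}(y))$ with both arguments lying in $\im(g_{n+1,n})$, whence $g_{n+1,n}(x)=g_{n+1,n}(y)$ and therefore $p=g_{\omega,n+1}(g_{n+1,n}(x))=q$, as required.

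The conceptual point — and what makes the transfinite machinery used inside Lemma~\ref{pre-cover-lemma1} unnecessary at this stage — is the observation that a single passage to the colimit at level $\omega$ already suffices: any potential collapse $h(p)=h(q)$ in $C_\omega$ is witnessed at a finite stage $n$, can be pushed one step forward into $C_{n+1}$, and is then eliminated by the ``monomorphism on the image'' guarantee of that successor step. The only delicate bookkeeping I would take care over is the common-index factorization $p=g_{\omega,n}(x)$, $q=g_{\omega,n}(y)$ (using directedness of $\n$) and the compatibility identities $k_n g_{n,m}=k_m$ and $g_{\omega,n+1}g_{n+1,n}=g_{\omega,n}$; these are routine, and all the genuine content is imported from Lemma~\ref{pre-cover-lemma1} and Lemma~\ref{direct-limit-covers-lemma}.
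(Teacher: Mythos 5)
Your proof is correct and follows essentially the same route as the paper's own proof: iterate Lemma~\ref{pre-cover-lemma1} to build the countable chain $(C_n,g_{n+1,n})$, pass to the directed colimit at stage $\omega$ using Lemma~\ref{direct-limit-covers-lemma}, and kill any collapse $h(p)=h(q)$ by witnessing it at a finite stage and pushing it one step forward into $C_{n+1}$, where the ``monomorphism on $\im(g_{n+1,n})$'' guarantee applies. The only cosmetic difference is that the paper factors $p,q$ through two indices $m\le n$ and transports via $z_n=g_{n,m}(x_m)$, whereas you move directly to a common index; the argument is identical.
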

\begin{proof}
By Lemma~\ref{pre-cover-lemma1} there exists an $\X-$precover $k_1:C_1\to A$ and an $S-$map $g_{1,0}:C\to C_1$ with $k_1 g_{1,0}=k$ such that for any $\X-$precover $k^\ast:C^\ast\to A$ and any $S-$map $h:C_1\to C^\ast$ with $k^\ast h=k_1$ then $h|_{\im(g_{1,0})}$ is a monomorphism.
Now let $n>1$ and suppose by way of induction that there is an $\X-$precover $k_{n-1}:C_{n-1}\to A$ and a map $g_{n-1,n-2}:C_{n-2}\to C_{n-1}$ with $k_{n-1} g_{n-1,n-2}=k_{n-2}$ and such that for any $\X-$precover $k^\ast:C^\ast\to A$ and any $S-$map $h:C_{n-1}\to C^\ast$ with $k^\ast h = k_{n-1}$ then $h|_{\im(g_{n-1,n-2})}$ is a monomorphism (here we obviously assume $C_0=C$ and $k_0=k$).
$$
\begin{tikzpicture}[description/.style={fill=white,inner sep=2pt}]
\matrix (m) [matrix of math nodes, row sep=3em,
column sep=2.5em, text height=1.5ex, text depth=0.25ex]
{C_{n-2} && C_{n-1} && A\\ 
 & &&&C^\ast\\};
\path[->,font=\scriptsize]
(m-1-3) edge node[auto,below left] {$h$} (m-2-5)
(m-2-5) edge node[auto, right] {$k^\ast$} (m-1-5)
(m-1-1) edge[->] node[auto,above] {$g_{n-1,n-2}$} (m-1-3)
(m-1-3) edge[->] node[auto,above] {$k_{n-1}$} (m-1-5);
\end{tikzpicture}
$$

Then by Lemma~\ref{pre-cover-lemma1} we deduce that there exists an $\X-$precover $k_n:C_n\to A$ and a map $g_{n,n-1}:C_{n-1}\to C_n$ with $k_n g_{n,n-1}=k_{n-1}$ and such that for any $\X-$precover $k^\ast:C^\ast\to A$ and any $S-$map $h:C_n\to C^\ast$ with $k^\ast h = k_n$ then $h|_{\im(g_{n,n-1})}$ is a monomorphism. 

\smallskip

Now let $(C_\omega,g_{\omega,n}:C_n\to C_\omega)$ be the directed colimit of the system $(C_n,g_{n,n-1})$ and consider the diagram
$$
\begin{tikzpicture}[description/.style={fill=white,inner sep=2pt}]
\matrix (m) [matrix of math nodes, row sep=3em,
column sep=2.5em, text height=1.5ex, text depth=0.25ex]
{C_{n-1} & &C_n \\
 & C_\omega &\\ 
& A & \\};
\path[->,font=\scriptsize]
(m-1-1) edge node[auto, right] {$g_{\omega,n-1}$} (m-2-2)
(m-1-3) edge node[auto, left] {$g_{\omega,n}$} (m-2-2)
(m-1-1) edge node[auto, left] {$k_{n-1}$} (m-3-2)
(m-1-3) edge node[auto, right] {$k_n$} (m-3-2)
(m-2-2) edge node[auto, above right] {$k_\omega$} (m-3-2)
(m-1-1) edge[->] node[auto,above] {$g_{n,n-1}$} (m-1-3);
\end{tikzpicture}
$$
where $k_\omega:C_\omega\to A$ is the unique $S-$map that makes the diagram commutative. Then by Lemma~\ref{direct-limit-covers-lemma} we deduce that $k_\omega:C_\omega\to A$ is an $\X-$precover for $A$. We claim that this $\X-$precover has the desired properties. So let $k^\ast:C^\ast\to A$ be an $\X-$precover of $A$ and let $h:C_\omega\to C^\ast$ be an $S-$map  with $k^\ast h = k_\omega$. Suppose also that $h(x) = h(y)$ for $x,y \in C_\omega$. Then there exists $m,n>0$ and $x_m\in C_m, y_n\in C_n$ such that $g_{\omega,m}(x_m) = x$ and $g_{\omega,n}(y_m)=y$. Assume without loss of generality that $m\le n$ and let $z_n=g_{n,m}(x_m)$. Then $$
hg_{\omega,n+1}(g_{n+1,n}(z_n)) = hg_{\omega,n}(z_n) = hg_{\omega,n}(y_n) = hg_{\omega,n+1}(g_{n+1,n}(y_n)).
$$
But $hg_{\omega,n+1}:C_{n+1}\to C^\ast$ and $hg_{\omega,n+1}|_{\im(g_{n+1,n})}$ is therfore a monomorphism. Hence $g_{n+1,n}(z_n)=g_{n+1,n}(y_n)$ and so
$$
x=g_{\omega,m}(x_m)  = g_{\omega,n+1}(g_{n+1,n}(z_n)) = g_{\omega,n+1}(g_{n+1,n}(y_n)) = g_{\omega,n}(y_n) = y
$$
as required.
\end{proof}

We can now deduce one of our main theorems.

\begin{theorem}
Let $S$ be a monoid, let $A$ be an $S-$act and let $\X$ be a class of $S-$acts closed under directed colimits. If $A$ has an $\X-$precover then $A$ has an $\X-$cover.
\end{theorem}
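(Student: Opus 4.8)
The plan is to invoke Lemma~\ref{pre-cover-lemma2} to replace the hypothesised precover by a well-behaved one, and then to argue that this improved precover is automatically a cover. First I would apply Lemma~\ref{pre-cover-lemma2} to the given $\X$-precover of $A$ to obtain an $\X$-precover $\bar k:\bar C\to A$ with the property that every $S$-map $h:\bar C\to C^\ast$ into an $\X$-precover $k^\ast:C^\ast\to A$ with $k^\ast h=\bar k$ is a monomorphism. I claim $\bar k$ is an $\X$-cover. By definition this requires every $S$-map $f:\bar C\to\bar C$ with $\bar k f=\bar k$ to be an isomorphism; taking $C^\ast=\bar C$, $k^\ast=\bar k$, $h=f$ in the property above already shows that every such $f$ is a monomorphism, so the entire problem reduces to showing that each such monomorphism $f$ is onto.

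To attack surjectivity I would exploit closure under directed colimits. Form the directed colimit $(L,\mu_n)$ of the chain $\bar C\xrightarrow{f}\bar C\xrightarrow{f}\cdots$ in which every transition map is $f$. Then $L\in\X$, and since $\bar k f=\bar k$ the maps $\bar k$ constitute a compatible cocone, inducing $\ell:L\to A$ with $\ell\mu_n=\bar k$; because $\bar k$ is a precover and $\ell\mu_0=\bar k$, the map $\ell$ is again an $\X$-precover. As $f$ is a monomorphism, Lemma~\ref{direct-limit-lemma} shows each $\mu_n$ is a monomorphism, and a short check shows $\ell$ inherits the monicity property of Lemma~\ref{pre-cover-lemma2}: any $h:L\to C^\ast$ with $k^\ast h=\ell$ restricts to a monomorphism on each $\mu_n(\bar C)$, and the $\mu_n(\bar C)$ form an increasing chain with union $L$. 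Applying the precover property of $\bar k$ to $\ell$ yields $\nu:L\to\bar C$ with $\bar k\nu=\ell$, and by the monicity property of $\ell$ this $\nu$ is itself a monomorphism. If $f$ were \emph{not} surjective, then $\mu_n(\bar C)=\mu_{n+1}(f(\bar C))\subsetneq\mu_{n+1}(\bar C)$, so the subacts $\nu\mu_n(\bar C)$ would form a strictly increasing chain inside $\bar C$.

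The main obstacle is converting this strictly ascending $\omega$-chain into an outright contradiction, since an ascending $\omega$-chain of subacts of $\bar C$ is by itself harmless. I expect to resolve this exactly in the spirit of Lemma~\ref{pre-cover-lemma1}, by iterating the colimit construction transfinitely, taking directed colimits at limit ordinals (which remain in $\X$), so as to produce a strictly increasing chain of subacts of a fixed $S$-act indexed by an ordinal of cardinality exceeding $2^{|\bar C|}$, which is impossible. Arranging the transfinite bookkeeping so that strict growth genuinely persists at every successor stage, and keeping the precovers and their comparison maps coherent throughout, is the delicate point; once it is in place, $f$ must be onto, hence an isomorphism, so $\bar k:\bar C\to A$ is the desired $\X$-cover.
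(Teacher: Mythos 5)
Your strategy is the same as the paper's own proof: use Lemma~\ref{pre-cover-lemma2} to get a precover $\bar k:\bar C\to A$ through which every comparison map into another precover is monic, reduce to showing that any $f:\bar C\to\bar C$ with $\bar kf=\bar k$ (automatically monic) is onto, and kill a non-surjective $f$ by manufacturing, via directed colimits, strictly increasing chains of subacts of a fixed act of arbitrary ordinal length. Everything you actually execute is correct: $L\in\X$, $\ell:L\to A$ is a precover (Lemma~\ref{direct-limit-covers-lemma}), $\ell$ inherits the monicity property (the argument is that of Lemma~\ref{directlimit-monomorphism-lemma}), $\nu:L\to\bar C$ is monic, and non-surjectivity of $f$ gives the strictly ascending $\omega$-chain $\nu\mu_n(\bar C)$ in $\bar C$.

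But the point you defer --- making strict growth persist at every successor stage --- is a genuine gap, and it is exactly where the paper's proof does its work. The most natural continuation, iterating $f$ itself transfinitely, fails at stage $\omega$: the endomorphism of $L$ induced by $f$ sends $\mu_{n+1}(x)$ to $\mu_n(x)$, hence is an isomorphism, so the growth supplied by $f$ is exhausted after $\omega$ steps. What is needed is a \emph{new} non-surjective endomorphism of $L$ compatible with $\ell$, and your own data supply one: $\mu_0\nu:L\to L$ satisfies $\ell\mu_0\nu=\bar k\nu=\ell$ and $\im(\mu_0\nu)\subseteq\mu_0(\bar C)=\mu_1(f(\bar C))\subsetneq\mu_1(\bar C)\subseteq L$. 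Thus $(L,\ell)$ is again a precover with the monicity property which is not a cover, and the whole construction restarts; carrying this through all ordinals, with monomorphisms and strict image inclusions maintained as inductive hypotheses and Lemma~\ref{directlimit-monomorphism-lemma} invoked at limit ordinals, is precisely the paper's transfinite system $(C_\alpha,g_{\alpha,\beta})$ with its conditions (1)--(3). The paper avoids the need for the $\mu_0\nu$ observation by running the contradiction off the global hypothesis that $A$ has no $\X$-cover, so that at each successor stage the unchanged precover $C_\kappa=C_{\kappa-1}$ admits a fresh endomorphism that is monic (by condition (1)) but not epic. With either formulation, each $C_\kappa$ embeds in the original precover $C$ by monicity, yielding for every ordinal $\kappa$ a strictly increasing chain of subacts of the fixed act $C$ of length $\kappa$, which is impossible once $|\kappa|>|C|$; that is the contradiction you anticipated, but the successor-stage mechanism has to be supplied, not just hoped for.
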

\begin{proof}
By Lemma~\ref{pre-cover-lemma2} there exists an $\X-$precover $k_0:C_0\to A$ such that for any $\X-$precover $k^\ast:C^\ast\to A$ and any $S-$map $h:C_0\to C^\ast$ with $k^\ast h = k_0$ then $h$ is a monomorphism.  We show that $k_0:C_0\to A$ is in fact an $\X-$cover of $A$.

\smallskip

Assume by way of contradiction that $A$ does not have an $\X-$cover. Let $C_1=C_0$ and $k_1=k_0$. Then there exists $g_{1,0}:C_0\to C_1$ with $k_1 g_{1,0}=k_0$ and such that $g_{1,0}$ is a monomorphism but not an epimorphism. It follows that
$$
\im(g_{1,0})\subsetneq C_1=C_0.
$$
By way of transfinite induction suppose that $\kappa\ge2$ is an ordinal such that for all ordinals $\alpha<\kappa$ there exists an $\X$-precover $k_\alpha:C_\alpha\to A$ such that
\begin{enumerate}
\item[(1)] for any $\X-$precover $k^\ast:C^\ast\to A$ and any $S-$map $h:C_\alpha\to C^\ast$ with $k^\ast h = k_\alpha$ then $h$ is a monomorphism;
\item[(2)] for all ordinals $\beta<\alpha$ there exists $S-$maps $g_{\alpha,\beta}:C_\beta\to C_\alpha$ which are monomorphisms but not epimorphisms and $\im(g_{\alpha,\beta})\subsetneq C_\alpha$;
\item[(3)] for all ordinals $\gamma<\beta<\alpha$, $g_{\alpha,\gamma}=g_{\alpha,\beta}g_{\beta,\gamma}$ and
$$
\im(g_{\alpha,\gamma})\subsetneq\im(g_{\alpha,\beta}).
$$
\end{enumerate}
We show that $\kappa$ also possesses these properties. If $\kappa$ is not a limit ordinal then let $C_\kappa=C_{\kappa-1}$ and $k_\kappa=k_{\kappa-1}$. Then clearly $k_\kappa:C_\kappa\to A$ satisfies the condition of (1) above. Also there exists $g_{\kappa,\kappa-1}:C_{\kappa-1}\to C_\kappa$ with $k_\kappa g_{\kappa,\kappa-1}=k_{\kappa-1}$ which is a monomorphism but not an epimorphism. For each $\beta<\kappa$ let $g_{\kappa,\beta}=g_{\kappa,\kappa-1}g_{\kappa-1,\beta}$. Then since $g_{\kappa,\kappa-1}$ is not onto it follows that $g_{\kappa,\beta}$ is not an epimorphism but it is a monomorphism and so $\im(g_{\kappa,\beta})\subsetneq C_\kappa$. By the inductive hypothesis, if $\gamma<\beta<\kappa$, $g_{\kappa,\gamma}=g_{\kappa,\beta}g_{\beta,\gamma}$ and in addition $\im(g_{\kappa,\gamma})\subsetneq\im(g_{\kappa,\beta})$.

Now suppose that $\kappa$ is a limit ordinal and let $(C_\kappa,g_{\kappa,\beta}:C_\beta\to C_\kappa)$ be the directed colimit of the system $(C_\beta,g_{\beta,\gamma})$ and consider the diagram
$$
\begin{tikzpicture}[description/.style={fill=white,inner sep=2pt}]
\matrix (m) [matrix of math nodes, row sep=3em,
column sep=2.5em, text height=1.5ex, text depth=0.25ex]
{C_\gamma & &C_\beta \\
 & C_\kappa &\\ 
& A & \\};
\path[->,font=\scriptsize]
(m-1-1) edge node[auto, right] {$g_{\kappa,\gamma}$} (m-2-2)
(m-1-3) edge node[auto, left] {$g_{\kappa,\beta}$} (m-2-2)
(m-1-1) edge node[auto, left] {$k_\gamma$} (m-3-2)
(m-1-3) edge node[auto, right] {$k_\beta$} (m-3-2)
(m-2-2) edge node[auto, above right] {$k_\kappa$} (m-3-2)
(m-1-1) edge[->] node[auto,above] {$g_{\beta,\gamma}$} (m-1-3);
\end{tikzpicture}
$$
where $k_\kappa:C_\kappa\to A$ is the unique $S-$map that makes the diagram commute. Then by Lemma~\ref{direct-limit-covers-lemma} we deduce that that $k_\kappa:C_\kappa\to A$ is an $\X-$precover for $A$. In addition we see that for  $\gamma<\beta<\kappa, g_{\kappa,\gamma}=g_{\kappa,\beta}g_{\beta,\gamma}$ and that since each $g_{\beta,\gamma}$ is a monomorphism then so is each $g_{\kappa,\beta}$. Suppose that $g_{\kappa,\gamma}$ in onto for some $\gamma<\kappa$. Then for each $\gamma<\beta<\kappa$, since $g_{\kappa,\beta}$ is a monomorphism, it follows that $g_{\beta,\gamma}$ is also onto which is a contradiction and so $g_{\kappa,\gamma}$ is not an epimorphism for any $\gamma<\kappa$. It is then clear that
$$
\im(g_{\kappa,\gamma})\subsetneq\im(g_{\kappa,\beta})\subsetneq C_\kappa.
$$
Finally let $k^\ast:C^\ast\to A$ be an $\X-$precover and let $h:C_\kappa\to C^\ast$ be such that $k^\ast h=k_\kappa$. Then for each $\beta<\kappa$ we have a commutative diagram
$$
\begin{tikzpicture}[description/.style={fill=white,inner sep=2pt}]
\matrix (m) [matrix of math nodes, row sep=3em,
column sep=2.5em, text height=1.5ex, text depth=0.25ex]
{C_\beta & C_\kappa&A \\
& & C^\ast\\};
\path[->,font=\scriptsize]
(m-2-3) edge node[auto, right] {$k^\ast$} (m-1-3)
(m-1-2) edge node[auto, right] {$h$} (m-2-3)
(m-1-1) edge node[auto, left] {$hg_{\kappa,\beta}$} (m-2-3)
(m-1-2) edge node[auto, above] {$k_\kappa$} (m-1-3)
(m-1-1) edge[->] node[auto,above] {$g_{\kappa,\beta}$} (m-1-2);
\end{tikzpicture}
$$
and by assumption $hg_{\kappa,\beta}$ is a monomorphism. Hence by Lemma~\ref{directlimit-monomorphism-lemma} it follows that $h$ is a monomorphism. In particular we can deduce that there is a monomorphism $C_\kappa\to C$.

Consequently we see that for any ordinal $\kappa$ we have a chain of length $\kappa$
$$
\im(g_{\kappa,0})\subsetneq\ldots\subsetneq\im(g_{\kappa,\beta})\subsetneq\ldots\subsetneq C_\kappa\subseteq C
$$
which is a contradiction.
\end{proof}

\bigskip

It is clear that a necessary condition for an $S-$act $A$ to have an $\X-$precover is that there exists $X\in\X$ with  $\text{Hom}_S(X,A)\ne\emptyset$. This condition is always satisfied in the category of modules over a ring (or indeed any category with a zero object), as every Hom-set is always non-empty, but this is not always the case for $S-$acts.

\smallskip

Let $S$ be a monoid and let $\X$ be a class of $S-$acts. We say that $\X$ satisfies the {\em (weak) solution set condition} if for all $S-$acts $A$ there exists a set $S_A\subseteq\X$ such that for all (indecomposable) $X\in\X$ and all $S-$maps $h:X\to A$ there exists $Y\in S_A$, $f:X\to Y$ and $g:Y\to A$ such that $h=gf$.

\begin{theorem}\label{solution-set-theorem}
Let $S$ be a monoid and let $\X$ be a class of $S-$acts such that
 $\dot\bigcup_{i \in I}X_i \in \X \Leftrightarrow X_i \in \X$ for each $i \in I$.
Then every $S-$act has an $\X-$precover if and only if
\begin{enumerate}
\item for every $S-$act $A$ there exists an $X$ in $\X$ such that $\text{Hom}_S(X,A)\ne\emptyset$;
\item $\X$ satisfies the weak solution set condition;
\end{enumerate}
\end{theorem}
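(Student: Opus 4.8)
The plan is to prove both implications, the forward one being essentially immediate and the reverse carrying all the content. For the forward implication, suppose every $S$-act has an $\X$-precover and fix an $S$-act $A$ with $\X$-precover $p\colon P\to A$. Then $P\in\X$ and $p\in\text{Hom}_S(P,A)$, so (1) holds. For (2) I would take the singleton $S_A=\{P\}$: given any $X\in\X$ (indecomposable or not) and any $h\colon X\to A$, the precover property produces $f\colon X\to P$ with $h=pf$, so $h$ factors through $P\in S_A$. This in fact yields the strong solution set condition, and a fortiori the weak one.

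The reverse implication is the substantive part. Assume (1) and (2), fix an $S$-act $A$ and let $S_A\subseteq\X$ be the associated set from (2). The idea is to assemble a single universal object out of $S_A$ by taking one copy of each member for each of its maps into $A$. Concretely I would set
$$
P=\dot\bigcup_{Y\in S_A}\ \dot\bigcup_{\psi\in\text{Hom}_S(Y,A)} Y_{(Y,\psi)},
$$
where $Y_{(Y,\psi)}$ is a copy of $Y$, and define $g\colon P\to A$ by letting $g$ restrict to $\psi$ on $Y_{(Y,\psi)}$. Because $S_A$ is a set and each $\text{Hom}_S(Y,A)$ is a set, this coproduct is genuinely set-indexed, so $P$ is an honest $S$-act; since each summand is isomorphic to some $Y\in S_A\subseteq\X$, the coproduct-closure hypothesis gives $P\in\X$.

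To check that $g$ is an $\X$-precover, take any $X'\in\X$ and any $g'\colon X'\to A$. I would decompose $X'=\dot\bigcup_k X'_k$ into its indecomposable subacts; by the coproduct-closure hypothesis each $X'_k\in\X$. Applying the weak solution set condition to each restriction $g'|_{X'_k}\colon X'_k\to A$ (whose source is now indecomposable) yields $Y_k\in S_A$ together with $f_k\colon X'_k\to Y_k$ and $\psi_k\colon Y_k\to A$ satisfying $g'|_{X'_k}=\psi_k f_k$. Composing $f_k$ with the inclusion of the copy $Y_{(Y_k,\psi_k)}$ into $P$ and gluing over $k$ through the universal property of the coproduct produces $f\colon X'\to P$ with $gf=g'$, as required. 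Condition (1) enters only to guarantee non-degeneracy: it excludes the pathological case in which no $\X$-act maps into $A$ at all (where no precover could exist, since a precover is itself such a map), and together with the coproduct closure it ensures the object $g\colon P\to A$ we have built is a legitimate $\X$-precover.

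The main obstacle is this decomposition-and-reassembly step in the reverse direction. The weak solution set condition only supplies factorizations when the source is indecomposable, whereas the precover property must be verified for every $X'\in\X$; bridging that gap is precisely where the coproduct-closure hypothesis is used in both of its directions at once, namely to know that the indecomposable components $X'_k$ of an arbitrary $X'\in\X$ again lie in $\X$ and that the large coproduct $P$ lies in $\X$. A secondary but essential point is that replacing a proper class by a \emph{set} $S_A$ is exactly what makes the coproduct defining $P$ set-indexed, and hence a genuine $S$-act rather than a proper class — which is the whole reason a solution set condition is imposed.
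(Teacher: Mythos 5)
Your proposal is correct and follows essentially the same route as the paper: the same coproduct $\dot\bigcup_{Y,\psi} Y_{(Y,\psi)}$ of one copy of each solution-set member per map into $A$, the same verification by decomposing an arbitrary $X'\in\X$ into indecomposable components and gluing the factorizations supplied by the weak solution set condition, and the same easy converse via $S_A=\{P\}$. The only cosmetic difference is that the paper explicitly discards members of $S_A$ with empty Hom-set into $A$, a step your indexing over pairs $(Y,\psi)$ makes unnecessary.
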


\begin{proof}
Suppose that $\X$ satisfies the given conditions. Let $A$ be a $S-$act and let $S_A=\{C_i:i\in I\}$ be as given in the weak solution set condition. Notice that by property (1) $S_A\ne\emptyset$. Moreover we can assume that for all $Y\in S_A$, $\text{Hom}_S(Y,A)\ne\emptyset$ as $S_A\setminus\{Y\in S_A|\text{Hom}_S(Y,A)=\emptyset\}$ will also satisfy the requirements of the solution set condition.

For each $i\in I$ and for each $S-$map $g:C_i\to A$ let $C_{i,g}$ be an isomorphic copy of $C_i$ with isomorphism $\phi_{i,g}:C_{i,g} \to C_i$ (recall that we are assuming that $\X$ is closed under isomorphisms). Let
$$
C_A=\dot\bigcup_{i\in I,g \in \text{Hom}_S(C_i,A)}C_{i,g}.
$$
By hypothesis, $C_A \in \X$ and we can define an $S-$map $\bar g:C_A \rightarrow A$ by $\bar g|_{C_{i,g}}=g\phi_{i,g}$ for each $i \in I$, $g \in $ Hom$_S(C_i,A)$. We claim that $(C_A,\bar g)$ is an $\X-$precover for $A$. Let $X \in \X$ and let $h:X \rightarrow A$ be an $S-$map. By the hypothesis $X=\dot\bigcup_{j \in J}X_j$ is a coproduct of indecomposable $S-$acts with $X_j\in\X$ for each $j\in J$. Further, by the hypothesis, there exists $C_{i_j}\in S_A$, $f_j:X_j\to C_{i_j}$ and $g_j:C_{i_j}\to A$ such that $g_jf_j = h|_{X_j}$. Now $\bar g|_{C_{i_j,g_j}}\phi_{i_j,g_j}^{-1} = g_j$ and so both triangles and the outer square in the following diagram commute (where the unlabelled arrows are the obvious inclusion maps).
$$
\begin{tikzpicture}[description/.style={fill=white,inner sep=2pt}]
\matrix (m) [matrix of math nodes, row sep=3em,
column sep=2.5em, text height=1.5ex, text depth=0.25ex]
{C_A & A \\
C_{i_j,g_j}& X=\dot\bigcup X_j\\
C_{i_j}&X_j\\};
\path[->,font=\scriptsize]
(m-3-2) edge node[auto,below] {$f_j$} (m-3-1)
(m-3-1) edge node[auto,left] {$g_j$} (m-1-2)
(m-3-2) edge node[auto, right] {$$} (m-2-2)
(m-2-2) edge node[auto, right] {$h$} (m-1-2)
(m-2-1) edge node[auto, left] {} (m-1-1)
(m-3-1) edge node[auto, left] {$\phi_{i_j,g_j}^{-1}$} (m-2-1)
(m-1-1) edge[->] node[auto,above] {$\bar g$} (m-1-2);
\end{tikzpicture}
$$
So define $f:X\to C_A$ by $f|_{X_j}=\phi_{i_j,g_j}^{-1}f_j$ and note that $\bar gf = h$ as required.

\medskip
Conversely if $A$ is an $S-$act with an $\X-$precover $C_A$, then $\text{Hom}_S(C_A,A)\ne\emptyset$ and on putting $S_A=\{C_A\}$ we see that $\X$ satisfies the (weak) solution set condition.
\end{proof}

\smallskip
Note from the proof of Theorem~\ref{solution-set-theorem} that we can also deduce
\begin{theorem}
Let $S$ be a monoid and let $\X$ be a class of $S-$acts such that
$X_i \in \X$ for each $i \in I\Rightarrow\dot\bigcup_{i \in I}X_i \in \X$.
Then every $S-$act has an $\X-$precover if and only if
\begin{enumerate}
\item for every $S-$act $A$ there exists an $X$ in $\X$ such that $\text{Hom}_S(X,A)\ne\emptyset$;
\item $\X$ satisfies the solution set condition;
\end{enumerate}
\end{theorem}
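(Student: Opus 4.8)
The plan is to adapt the proof of Theorem~\ref{solution-set-theorem} almost verbatim, exploiting the single change in hypotheses: because we now have the \emph{full} solution set condition (factoring through $S_A$ for \emph{all} $X\in\X$, not just indecomposable ones), we can dispense with the decomposition of the test act into indecomposables, and consequently only the implication $X_i\in\X\Rightarrow\dot\bigcup_{i}X_i\in\X$ is needed rather than the two-sided closure used before.

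First I would assume conditions (1) and (2), fix an $S$-act $A$, and let $S_A=\{C_i:i\in I\}$ be a solution set for $A$. By (1) we have $S_A\ne\emptyset$, and after discarding any $C_i$ with $\text{Hom}_S(C_i,A)=\emptyset$ we may assume every member of $S_A$ admits a map to $A$. For each $i\in I$ and each $g\in\text{Hom}_S(C_i,A)$ I take an isomorphic copy $C_{i,g}$ of $C_i$ with isomorphism $\phi_{i,g}:C_{i,g}\to C_i$, and set $C_A=\dot\bigcup_{i\in I,\,g\in\text{Hom}_S(C_i,A)}C_{i,g}$. The closure hypothesis then gives $C_A\in\X$, and I define $\bar g:C_A\to A$ by $\bar g|_{C_{i,g}}=g\phi_{i,g}$. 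To verify the precover property, let $X\in\X$ and $h:X\to A$. Here the argument is genuinely shorter than in Theorem~\ref{solution-set-theorem}: applying the full solution set condition directly to $h$ yields some $C_i\in S_A$ together with $f:X\to C_i$ and $g:C_i\to A$ satisfying $h=gf$. Since $g\in\text{Hom}_S(C_i,A)$, the summand $C_{i,g}$ occurs in $C_A$; composing $f$ with $\phi_{i,g}^{-1}$ and the inclusion $C_{i,g}\hookrightarrow C_A$ produces a map $\tilde f:X\to C_A$ with $\bar g\tilde f=g\phi_{i,g}\phi_{i,g}^{-1}f=gf=h$. Thus $(C_A,\bar g)$ is an $\X$-precover of $A$. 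For the converse, if $A$ has an $\X$-precover $k:C_A\to A$ then $\text{Hom}_S(C_A,A)\ne\emptyset$, giving (1), and taking $S_A=\{C_A\}$ satisfies the solution set condition, since the precover property supplies, for each $h:X\to A$ with $X\in\X$, a map $f:X\to C_A$ with $h=kf$.

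I do not foresee a serious obstacle, as the essential content already resides in the preceding theorem's proof. The one point to keep straight is that the solution set condition is now invoked for every $X\in\X$, rather than only for indecomposable acts; this is precisely what lets us apply the factoring to $h$ on the nose and hence avoid both the indecomposable decomposition and the $\Leftarrow$ half of the closure assumption that Theorem~\ref{solution-set-theorem} relied upon.
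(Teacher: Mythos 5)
Your proof is correct and is essentially the paper's own argument: the paper establishes this theorem simply by remarking that the construction in the proof of Theorem~\ref{solution-set-theorem} carries over, and your write-up makes explicit exactly the intended simplifications. In particular, you correctly identify that the full solution set condition lets you factor $h$ directly (avoiding the decomposition of $X$ into indecomposables) and that consequently only the implication $X_i\in\X\Rightarrow\dot\bigcup_i X_i\in\X$ is needed, which is precisely why the weakened closure hypothesis suffices.
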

\smallskip

\begin{corollary}\label{lambda-skeleton-corollary}
Let $S$ be a monoid and let $\X$ be a class of $S-$acts such that
\begin{enumerate}
\item $\dot\bigcup_{i \in I}X_i \in \X \Leftrightarrow X_i \in \X$ for each $i \in I$;
\item for every $S-$act $A$ there exists an $X$ in $\X$ such that $\text{Hom}_S(X,A)\ne\emptyset$;
\item there exists a cardinal $\lambda$ such that for every indecomposable $X$ in $\X$, $|X|<\lambda$.
\end{enumerate}
Then every $S-$act has an $\X-$precover.
\end{corollary}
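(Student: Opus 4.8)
The plan is to deduce this directly from Theorem~\ref{solution-set-theorem}, so that almost no new work is required. Condition~(1) of the corollary is exactly the coproduct hypothesis demanded by that theorem, and condition~(2) of the corollary is precisely condition~(1) appearing in its statement. Consequently the only thing left to establish is that $\X$ satisfies the weak solution set condition, and the entire content of the argument is to extract this from the cardinality bound in condition~(3).

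First I would invoke Remark~\ref{bounded-remark}. Since condition~(3) supplies a cardinal $\lambda$ bounding the size of every indecomposable act in $\X$, the indecomposable members of $\X$ form a set, and so we may fix a $\lambda$-skeleton $\X_\lambda\subseteq\X$, that is, a set of pairwise non-isomorphic indecomposable acts containing a representative of each isomorphism class. The key observation is that because \emph{every} indecomposable $X\in\X$ satisfies $|X|<\lambda$, each such $X$ is isomorphic to a (unique) member of $\X_\lambda$; thus $\X_\lambda$ is a genuine set of representatives for \emph{all} indecomposable acts in $\X$, not merely some of them.

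Next I would verify the weak solution set condition by taking $S_A=\X_\lambda$ for every $S$-act $A$ (note that this choice does not even depend on $A$). Given an indecomposable $X\in\X$ and an $S$-map $h:X\to A$, choose $Y\in\X_\lambda$ together with an isomorphism $\phi:X\to Y$. Setting $f=\phi:X\to Y$ and $g=h\phi^{-1}:Y\to A$ then gives $gf=h\phi^{-1}\phi=h$, so $h$ factors through the member $Y$ of $S_A$. This is exactly the weak solution set condition.

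Finally, with both hypotheses of Theorem~\ref{solution-set-theorem} now in hand---its condition~(1) being condition~(2) of the corollary, and its condition~(2), the weak solution set condition, just verified---that theorem yields that every $S$-act has an $\X$-precover, completing the proof. I do not expect a genuine obstacle here beyond correctly matching the hypotheses; the only real subtlety worth flagging is that the uniform bound on indecomposables is what forces the skeleton to capture all indecomposables at once, which is precisely what allows the single set $\X_\lambda$ to serve as a solution set simultaneously for every $A$.
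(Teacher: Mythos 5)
Your proof is correct and is essentially the same as the paper's: both fix a $\lambda$-skeleton of the indecomposable acts in $\X$ (guaranteed by condition (3)), take it as the solution set $S_A$ for every $A$, factor any map $h:X\to A$ from an indecomposable $X$ through an isomorphism onto a skeleton representative, and then invoke Theorem~\ref{solution-set-theorem}. No gaps; the matching of hypotheses is exactly as the paper intends.
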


\begin{proof}
By (3) there exists a $\lambda-$skeleton $C=\{C_i  : i \in I\}$, for the indecomposable $S-$acts in $\X$. Suppose that $A$ is an $S-$act  and let $S_A=C$. If $X \in \X$ is indecomposable and if $h:X \to A$ is an $S-$map then there exists an isomorphism $\phi:X \to C_i$ for some $C_i \in C$ and we have an $S-$map $h\phi^{-1}:C_i \to A$ and clearly $h=h\phi^{-1}\phi$ and so $\X$ satisfies the weak solution set condition.
\end{proof}

\medskip

Let $A$ be an $S-$act and let $\rho$ be a congruence on $A$. We shall say that $\rho$ is {\em $\X-$pure} if $A/\rho \in\X$.
The inspiration for some of the following results comes from~\cite{xu-96}.

\begin{theorem}
Let $S$ be a monoid, let $\X$ be a class of $S-$acts and suppose that $A$ is an $S-$act such that $\psi:F\to A$ is an $\X-$precover. Suppose also that the set of $\X-$pure congruences on $F$ is closed under unions of chains. Then there exists an $\X-$precover $\phi:G\to A$ of $A$ such that there is no non-identity $\X-$pure congruence $\rho\subset\ker(\phi)$ on $G$.
\end{theorem}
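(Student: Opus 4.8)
The plan is to realise $G$ as a quotient $F/\rho_0$ of the given precover, where $\rho_0$ is a \emph{maximal} $\X$-pure congruence on $F$ that is contained in $\ker(\psi)$, and then to read off the required minimality from the congruence correspondence of Remark~\ref{remark-1}. The first ingredient is the elementary observation that quotienting a precover by an $\X$-pure subcongruence of its kernel produces another precover. Concretely, if $\rho$ is an $\X$-pure congruence on $F$ with $\rho\subseteq\ker(\psi)$, then $\psi$ factors as $\psi=\bar\psi\,\rho^\natural$ for the unique $S$-map $\bar\psi:F/\rho\to A$, $\bar\psi(a\rho)=\psi(a)$; since $F/\rho\in\X$, and since any $g':P'\to A$ with $P'\in\X$ lifts through $\psi$ to some $f_0:P'\to F$ and hence through $\bar\psi$ via $\rho^\natural f_0$, the map $\bar\psi:F/\rho\to A$ is again an $\X$-precover. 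Note also that $1_F$ is $\X$-pure (as $F\in\X$) and trivially contained in $\ker(\psi)$, so the set
$$
\Sigma=\{\rho : \rho\text{ is an }\X\text{-pure congruence on }F,\ \rho\subseteq\ker(\psi)\}
$$
is non-empty.

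Next I would apply Zorn's Lemma to $\Sigma$ ordered by inclusion. A chain in $\Sigma$ is a chain of $\X$-pure congruences, each contained in $\ker(\psi)$; its union is again a congruence (the chain condition is needed only to obtain transitivity), is contained in $\ker(\psi)$ since each member is, and is $\X$-pure precisely by the standing hypothesis that the set of $\X$-pure congruences on $F$ is closed under unions of chains. Hence every chain has an upper bound lying in $\Sigma$, and so $\Sigma$ has a maximal element $\rho_0$. Put $G=F/\rho_0$ and $\phi=\bar\psi:G\to A$; by the factoring observation above, $\phi$ is an $\X$-precover of $A$.

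Finally I would verify that $\phi$ admits no proper further $\X$-pure quotient inside its kernel. Suppose $\rho$ were a non-identity $\X$-pure congruence on $G$ with $\rho\subseteq\ker(\phi)$. By Remark~\ref{remark-1} the congruence $\rho_0/\rho=\ker(\rho^\natural\rho_0^\natural)$ on $F$ contains $\rho_0$ and satisfies $F/(\rho_0/\rho)\cong G/\rho\in\X$, so $\rho_0/\rho$ is $\X$-pure; moreover $\ker(\phi)$ on $G$ corresponds to $\ker(\psi)$ on $F$, so $\rho\subseteq\ker(\phi)$ forces $\rho_0/\rho\subseteq\ker(\psi)$, while $\rho\neq 1_G$ forces $\rho_0\subsetneq\rho_0/\rho$ by the last clause of Remark~\ref{remark-1}. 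Thus $\rho_0/\rho\in\Sigma$ lies strictly above $\rho_0$, contradicting maximality, and no such $\rho$ exists. The main obstacle here is not any single deep step but the bookkeeping of this last paragraph: one must keep the two-way correspondence between congruences on $G=F/\rho_0$ lying under $\ker(\phi)$ and $\X$-pure congruences on $F$ lying between $\rho_0$ and $\ker(\psi)$ exactly straight, so that maximality of $\rho_0$ genuinely rules out a proper further quotient.
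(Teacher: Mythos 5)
Your proposal is correct and follows essentially the same route as the paper's proof: factor $\psi$ through a quotient by a maximal $\X$-pure congruence $\rho_0\subseteq\ker(\psi)$ obtained via Zorn's Lemma (using the closure-under-chains hypothesis), check that the induced map $F/\rho_0\to A$ is still an $\X$-precover, and then use Remark~\ref{remark-1} to show a non-identity $\X$-pure congruence below $\ker(\phi)$ would contradict maximality. Your treatment is, if anything, slightly more careful than the paper's (noting $1_F\in\Sigma$ to avoid the paper's case split, and spelling out why $\rho_0/\rho\subseteq\ker(\psi)$), but the ideas are identical.
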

\begin{proof}
First, if there does not exists a non-identity $\X-$pure congruence $\sigma\subseteq\ker(\psi)$ on $F$ then we let $G=F$ and $\phi=\psi$. Otherwise by assumption any chain of $\X-$pure congruences on $F$ contained in $\ker(\psi)$ has an upper bound and so by Zorn's lemma there is a maximum $\sigma$ say. Let $G=F/\sigma$ and let $\phi:G\to A$ by the natural map which makes
$$
\begin{tikzpicture}[description/.style={fill=white,inner sep=2pt}]
\matrix (m) [matrix of math nodes, row sep=3em,
column sep=2.5em, text height=1.5ex, text depth=0.25ex]
{F&A \\
 G&\\};
\path[->,font=\scriptsize]
(m-1-1) edge node[auto, above] {$\psi$} (m-1-2)
(m-1-1) edge node[auto, left] {$\sigma^\natural$} (m-2-1)
(m-2-1) edge[->] node[auto,right] {$\phi$} (m-1-2);
\end{tikzpicture}
$$
commute. Then it is easy to check that $\phi:G\to A$ is an $\X-$precover as if $H\in\X$ and if $f:H\to A$ then there exists $g:H\to F$ such that $\psi g=f$. So $\sigma^\natural g:H\to G$ and $\phi\sigma^\natural g = \psi g = f$ and $\phi:G\to A$ is an $\X-$precover.

Finally suppose that $\rho$ is an $\X-$pure congruence on $G$ such that $\rho\subset\ker(\phi)$. Then by Remark~\ref{remark-1}, $\sigma/\rho$ is an $\X-$pure congruence on $F$ containing $\sigma$ and $\sigma/\rho=\ker(\rho^\natural\sigma^\natural)\subseteq\ker(\psi)$. By the maximality of $\sigma$ it follows that $\sigma=\sigma/\rho$ and so $\rho=1_G$, a contradiction as required.
\end{proof}

Following~\cite{bican-09} we can extend this result as follows.

\begin{proposition}
Let $S$ be a monoid and let $\X$ be a class of $S-$acts. If $A$ is an $S-$act such that $\psi:F\to A$ is an $\X-$cover then there is no non-idenity $\X-$pure congruence $\rho\subset\ker\psi$ on $F$.
\end{proposition}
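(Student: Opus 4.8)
The plan is to play the two halves of the $\X$-cover definition against each other: the lifting property of a precover produces an endomorphism of $F$ over $\psi$, and the extra cover condition forces that endomorphism to be an isomorphism. So suppose $\rho$ is an $\X$-pure congruence on $F$ with $\rho\subseteq\ker\psi$; I aim to show $\rho=1_F$. Since $\rho$ is $\X$-pure we have $F/\rho\in\X$, and since $\rho\subseteq\ker\psi$ the map $\psi$ factors through the natural projection $\rho^\natural:F\to F/\rho$, yielding an $S$-map $\bar\psi:F/\rho\to A$ with $\bar\psi\rho^\natural=\psi$.

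Next I would invoke the precover property. Because $F/\rho\in\X$ and the cover $\psi:F\to A$ is in particular an $\X$-precover, there is an $S$-map $f:F/\rho\to F$ with $\psi f=\bar\psi$. Composing with $\rho^\natural$ produces an endomorphism $f\rho^\natural:F\to F$ satisfying $\psi(f\rho^\natural)=(\psi f)\rho^\natural=\bar\psi\rho^\natural=\psi$.

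Now the defining cover condition applies directly: since $f\rho^\natural$ is an endomorphism of $F$ with $\psi(f\rho^\natural)=\psi$, it must be an isomorphism, and hence in particular injective. From injectivity of $f\rho^\natural$ I would then conclude that $\rho^\natural$ is itself injective, for if $(a,b)\in\rho$ then $\rho^\natural(a)=\rho^\natural(b)$, so $f\rho^\natural(a)=f\rho^\natural(b)$, giving $a=b$. Thus $\rho=1_F$, and no non-identity $\X$-pure congruence contained in $\ker\psi$ can exist.

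I do not anticipate a genuine obstacle here; the entire argument is a short diagram chase that correctly chains the precover lifting with the endomorphism-is-iso clause. The only point needing a little care is the last implication, that injectivity of the composite $f\rho^\natural$ forces $\rho^\natural$ to be injective and hence $\rho$ to be the identity congruence, which rests on the elementary fact that $\rho^\natural$ identifies exactly the $\rho$-related pairs.
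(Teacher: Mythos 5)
Your proof is correct and follows essentially the same argument as the paper's: factor $\psi$ through $\rho^\natural$, lift via the precover property to get $f:F/\rho\to F$, and use the cover condition to conclude $f\rho^\natural$ is an automorphism, forcing $\rho^\natural$ to be injective and $\rho=1_F$. The only difference is notational (your $\bar\psi$, $f$ are the paper's $\phi$, $\theta$), and you spell out the final injectivity step that the paper leaves implicit.
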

\begin{proof}
Let $\rho\subset\ker\psi$ be an $\X-$pure congruence on $F$. Then there is an induced $S-$map $\phi:F/\rho\to A$ such that $\phi\rho^\natural=\psi$. Since $(F,\psi)$ is a precover then there exists an $S-$map $\theta : F/\rho\to F$ such that $\psi\theta=\phi$.
$$
\begin{tikzpicture}[description/.style={fill=white,inner sep=2pt}]
\matrix (m) [matrix of math nodes, row sep=3em,
column sep=2.5em, text height=1.5ex, text depth=0.25ex]
{F&A \\
 &F/\rho\\};
\path[->,font=\scriptsize]
(m-1-1) edge node[auto, above] {$\psi$} (m-1-2)
(m-2-2) edge node[auto, left] {$\theta$} (m-1-1)
(m-2-2) edge[->] node[auto,right] {$\phi$} (m-1-2);
\end{tikzpicture}
$$
Hence $\psi\theta\rho^\natural=\phi\rho^\natural=\psi$ and so $\theta\rho^\natural$ is an automorphism of $F$. Hence $\rho^\natural$ is a monomorphism and so $\rho=1_A$ as required.
\end{proof}

Let $\X$ be a class of $S-$acts. Let us say that $\X$ is {\em (weakly) congruence pure} if for each cardinal $\lambda$ 
there exists a cardinal $\kappa >\lambda$ such that for every (indecomposable) $X\in \X$ with $|X|\ge\kappa$ and every congruence $\rho$ on $X$ with $|X/\rho|\le\lambda$ there exists an $\X-$pure congruence $1_X\ne\theta\subseteq\rho$ of $X$.

\begin{theorem}\label{congruence-pure-theorem}
Let $S$ be a monoid, let  $\X$ be a class of $S-$acts such that
\begin{enumerate}
\item $\dot\bigcup_{i \in I}X_i \in \X \Leftrightarrow X_i \in \X$ for each $i \in I$;
\item for every $S-$act $A$ there exists an $X$ in $\X$ such that $\text{Hom}_S(X,A)\ne\emptyset$;
\item for each $X\in\X$ the set of all $\X-$pure congruences on $X$ is closed under unions of chains;
\item $\X$ is weakly congruence pure.
\end{enumerate}
Then $\X$ satisfies the weak solution set condition and so every $S-$act has an $\X-$precover.
\end{theorem}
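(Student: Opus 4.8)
The plan is to verify the weak solution set condition directly and then invoke Theorem~\ref{solution-set-theorem} to conclude the existence of precovers. Fix an $S$-act $A$ and an infinite cardinal $\lambda\ge|A|$, and let $\kappa>\lambda$ be the cardinal provided by weak congruence purity (condition (4)) for this $\lambda$. I would take $S_A$ to be a set of representatives, one from each isomorphism class, of the indecomposable acts in $\X$ of cardinality $<\kappa$; every act of size $<\kappa$ is isomorphic to one whose underlying set is a subset of a fixed set of cardinality $\kappa$, so this is a genuine set, and it may be chosen inside $\X$ since $\X$ is closed under isomorphisms. It then suffices to show that every $S$-map $h:X\to A$ with $X$ an indecomposable member of $\X$ factors through some element of $S_A$.

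For this, consider the set $P$ of all $\X$-pure congruences $\theta$ on $X$ with $\theta\subseteq\ker h$. It contains $1_X$ (as $X/1_X=X\in\X$) and, by condition (3), is closed under unions of chains, so Zorn's Lemma furnishes a maximal element $\theta_0$. Set $Y=X/\theta_0$. Then $Y\in\X$ since $\theta_0$ is $\X$-pure, and $Y$ is indecomposable by Lemma~\ref{indecomposable-epi-lemma} applied to $\theta_0^\natural:X\to Y$. As $\theta_0\subseteq\ker h$, the map $h$ factors as $h=\bar h\,\theta_0^\natural$ for the induced map $\bar h:Y\to A$, $x\theta_0\mapsto h(x)$, and $Y/\ker\bar h\cong h(X)\subseteq A$ gives $|Y/\ker\bar h|\le|A|\le\lambda$.

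The crux is that $|Y|<\kappa$. If not, condition (4) applied to the indecomposable $Y\in\X$ and the congruence $\ker\bar h$ (with $|Y/\ker\bar h|\le\lambda$) produces an $\X$-pure congruence $1_Y\ne\theta'\subseteq\ker\bar h$. By Remark~\ref{remark-1} the congruence $\theta_0/\theta'=\ker((\theta')^\natural\theta_0^\natural)$ on $X$ contains $\theta_0$ and satisfies $X/(\theta_0/\theta')=Y/\theta'\in\X$, so it is $\X$-pure; it lies in $\ker h$ because $\theta'\subseteq\ker\bar h$; and it strictly contains $\theta_0$ because $\theta'\ne1_Y$. This contradicts the maximality of $\theta_0$ in $P$, so $|Y|<\kappa$. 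Hence $Y$ is isomorphic to some $Y'\in S_A$ via an isomorphism $\iota:Y\to Y'$, and with $f=\iota\theta_0^\natural:X\to Y'$ and $g=\bar h\iota^{-1}:Y'\to A$ we get $gf=h$. Thus $\X$ satisfies the weak solution set condition, and Theorem~\ref{solution-set-theorem}, using hypotheses (1) and (2), yields an $\X$-precover for every $S$-act.

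The step I expect to be the main obstacle is this last contradiction: one must pull the $\X$-pure congruence $\theta'$ on the quotient $Y$ back, through the correspondence of Remark~\ref{remark-1}, to a congruence on $X$ that is simultaneously $\X$-pure (from $Y/\theta'\in\X$), contained in $\ker h$ (from $\theta'\subseteq\ker\bar h$), and strictly larger than $\theta_0$ (from $\theta'\ne1_Y$). Arranging all three at once is exactly where hypotheses (3) and (4) and Remark~\ref{remark-1} have to mesh, and it is also where the choice $\lambda\ge|A|$ enters, via $|Y/\ker\bar h|\le\lambda$, to make condition (4) applicable to $Y$.
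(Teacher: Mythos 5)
Your proof is correct and follows essentially the same route as the paper's: Zorn's lemma via hypothesis (3) to get a maximal $\X$-pure congruence inside $\ker h$, Lemma~\ref{indecomposable-epi-lemma} and Remark~\ref{remark-1} to show the resulting quotient admits no non-identity $\X$-pure congruence inside $\ker\bar h$, weak congruence purity to bound its cardinality by $\kappa$, and Theorem~\ref{solution-set-theorem} to conclude. The only (cosmetic) difference is that you run Zorn uniformly starting from $1_X$ and obtain $|Y|<\kappa$ by contradiction, whereas the paper first splits into the cases $|X|<\kappa$ and $|X|\ge\kappa$.
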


\begin{proof}
Let $A$ be an $S-$act, let $\lambda = \max\{|A|,\aleph_0\}$, let $\kappa$ be as given in the weakly congruence pure condition and let $S_A$ be any $\kappa-$skeleton of $\X$ consisting of $S-$acts of cardinalities less than $\kappa$, .
Suppose that $X$ is an indecomposable $S-$act and that $h:X\to A$ is an $S-$map. If $|X|<\kappa$ then let $Y\in S_A$ be an isomorphic copy of $X$ and let $f:X\to Y$ be an isomorphism and define $g:Y\to A$ by $g=hf^{-1}$ so that $h=gf$.

Suppose now that $|X|\ge\kappa$. Then $|X/\ker(h)|=|\im(h)|\le\lambda$ and so there exists an $\X-$pure congruence $1_{X}\ne\theta\subseteq\ker(h)$ on $X$ with  $X/\theta\in\X$. In fact, using a combination of Zorn's lemma and the hypothesis that the set of $\X-$pure congruences on $X$ is closed under unions of chains, we can assume that $\theta$ is maximal with respect to this property. Now let $\bar h : X/\theta\to A$ be the unique map such that
$$
\begin{tikzpicture}[description/.style={fill=white,inner sep=2pt}]
\matrix (m) [matrix of math nodes, row sep=3em,
column sep=2.5em, text height=1.5ex, text depth=0.25ex]
{X & X/\theta \\
 & A\\ };
\path[->,font=\scriptsize]
(m-1-1) edge node[auto, left] {$h$} (m-2-2)
(m-1-2) edge node[auto,right] {$\bar h$} (m-2-2)
(m-1-1) edge[->] node[auto,above] {$\theta^{\natural}$} (m-1-2);
\end{tikzpicture}
$$
commutes. Notice that since $\im(\bar h) = \im(h)$ then
$$
|(X/\theta)/\ker(\bar h)| = |X/\ker(h)|\le\lambda.
$$
Now suppose, by way of contradiction, that $1_{X/\theta}\ne\rho\subseteq\ker(\bar h)$ is an $\X-$pure congruence on $X/\theta$ so that $(X/\theta)/\rho\in\X$. Then by Remark~\ref{remark-1} and since $X\in\X$ it follows that $\theta/\rho$ is an $\X-$pure congruence on $X$ containing $\theta$ and since $\rho\subseteq\ker(\bar h)$ it easily follows that $\theta/\rho\subseteq\ker(h)$. Hence by the maximality of $\theta$ we deduce that $\theta/\rho=\theta$ and so $\rho=1_{X/\theta}$. Therefore it follows that $X/\theta$ does not contain a non-identity $\X-$pure congruence contained in $\ker(\bar h)$ and since by Lemma~\ref{indecomposable-epi-lemma}, $X/\theta$ is indecomposable and since $\X$ is weakly congruence pure we deduce that $|X/\theta|<\kappa$.
Consequently it follows that there exists $Y\in S_A$ and an isomorphism $\bar f:X/\theta\to Y$ and so define $f:X\to Y$ by $f=\bar f\theta^\natural$ and $g:Y\to A$ by $g=\bar h{\bar f}^{-1}$ so that $gf=h$.

Hence $\X$ satisfies the weak solution set condition and the result follows from Theorem~\ref{solution-set-theorem}.
\end{proof}
A similar condition to this is considered in \cite{bican-01} and forms the basis of one of the proofs of the flat cover conjecture.

\section{Strongly Flat and Condition $(P)$ Covers}

In this section we apply some of the previous results to the specific classes $\X=\SF$ and $\X=\CP$. In particular note from Lemma \ref{coproduct-decompose} that $\dot\bigcup_{i \in I}X_i \in \X \Leftrightarrow X_i \in \X$ for each $i \in I$ holds for both $\X=\SF$ and $\X=\CP$. Also, since $S$ is strongly flat (and hence satisfies condition $(P)$) given any $S-$act $A$ there exists an $X$ in $\X$ such that $\text{Hom}_S(X,A)\ne\emptyset$.

Let $A$ be an $S-$act and let $\rho$ be a congruence on $A$. Recall that we say that $\rho$ is {\em $\X-$pure} if $A/\rho \in\X$.
So, by Propositions \ref{ce-pure-proposition} and \ref{cp-pure-proposition}, Corollary~\ref{x-pure-corollary} and \cite[Corollary 4.1.3 and Theorem 4.1.4]{ahsan-08} we deduce

\begin{corollary}
Let $S$ be a monoid, let $X$ be an $S-$act and let $\rho$ be a congruence on $X$.
\begin{enumerate}
\item If $X\in\E$ then $\rho$ is $\E$-pure if and only if it is $1-$pure.
\item If $X\in\CP$ then $\rho$ is $\CP$-pure if it is $2-$pure.
\item If $X\in\SF$ then $\rho$ is $\SF$-pure if and only if it is pure if and only if it is $2-$pure.
\item If $X\in\Pr$ then $\rho$ is $\Pr$-pure if and only if $\rho^\natural$ splits.
\end{enumerate}
\end{corollary}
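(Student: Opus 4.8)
The plan is to apply the three preceding results essentially verbatim, exploiting the fact that the natural projection $\rho^\natural:X\to X/\rho$ is always an $S$-epimorphism, together with the observation that $\rho$ is $\X$-pure precisely when $X/\rho\in\X$, while $\rho$ being ($1$-, $2$-)pure means exactly that $\rho^\natural$ is ($1$-, $2$-)pure. Thus each of (1)--(3) is obtained by reading off an earlier statement with $Y=X/\rho$ and $\psi=\rho^\natural$.

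For (1), take $\psi=\rho^\natural$ in Proposition~\ref{ce-pure-proposition}: since $X\in\E$, that proposition gives $X/\rho\in\E$ if and only if $\rho^\natural$ is $1$-pure, which is exactly the claimed equivalence. For (2), Proposition~\ref{cp-pure-proposition} applied to $\psi=\rho^\natural$ (with $X\in\CP$) says that if $\rho^\natural$ is $2$-pure then $X/\rho\in\CP$, giving the single implication stated; no converse is asserted, and indeed the earlier one-element-act example shows the converse can fail. For (3), Corollary~\ref{x-pure-corollary} with $\psi=\rho^\natural$ and $X\in\SF$ directly yields the equivalence of $X/\rho$ being strongly flat, $\rho^\natural$ being pure, and $\rho^\natural$ being $2$-pure, which is precisely the stated chain.

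The only part needing more than a translation is (4), where $\rho$ being $\Pr$-pure means $X/\rho$ is projective. For the forward direction I would use projectivity of $X/\rho$ itself: lifting $1_{X/\rho}$ along the epimorphism $\rho^\natural$ produces $h:X/\rho\to X$ with $\rho^\natural h=1_{X/\rho}$, so $\rho^\natural$ splits. For the converse a splitting exhibits $X/\rho$ as a retract of the projective act $X$, whence \cite[Corollary 4.1.3 and Theorem 4.1.4]{ahsan-08} (projectives are exactly retracts of free acts, and a retract of a projective is again projective) gives that $X/\rho$ is projective. I expect (4) to be the only delicate point: one must notice that its converse genuinely uses $X\in\Pr$ through the retract argument, rather than a generic ``every epimorphism onto a projective splits'' characterisation, since here only the single map $\rho^\natural$ is assumed to split.
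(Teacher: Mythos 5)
Your proposal is correct and follows essentially the same route as the paper: parts (1)--(3) are exactly the paper's specialisation of Propositions~\ref{ce-pure-proposition} and~\ref{cp-pure-proposition} and Corollary~\ref{x-pure-corollary} to $\psi=\rho^\natural$, and part (4) is precisely the content the paper delegates to \cite[Corollary 4.1.3 and Theorem 4.1.4]{ahsan-08} (lifting $1_{X/\rho}$ for the splitting, and the retract-of-a-projective argument for the converse). Your closing observation that only the converse of (4) actually uses $X\in\Pr$ is accurate and a worthwhile clarification, though it does not change the argument.
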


From Lemma~\ref{direct-limit-quotient-lemma} and Propositions~\ref{direct-limit-sf-proposition} and \ref{direct-limit-p-proposition} we can immediately deduce the important result
\begin{theorem}
Let $S$ be a monoid and let $\X$ be a class of $S-$acts closed under directed colimits. Then $\X$ is closed under chains of $\X-$pure congruences. In particular this is true for the classes $\X=\SF$ and $\X=\CP$.
\end{theorem}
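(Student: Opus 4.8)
The statement to establish is that for every $X\in\X$, the collection of $\X$-pure congruences on $X$ (those $\rho$ with $X/\rho\in\X$) is closed under unions of chains, which is precisely condition (3) of Theorem~\ref{congruence-pure-theorem}. The plan is to recognise the union of such a chain as a directed colimit of the individual quotients and then invoke closure of $\X$ under directed colimits. Concretely, I would fix $X\in\X$ and a chain $\{\rho_i:i\in I\}$ of $\X$-pure congruences, so that $X/\rho_i\in\X$ for every $i$, set $\rho=\bigcup_{i\in I}\rho_i$, and aim to show $X/\rho\in\X$.

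First I would arrange for the index set to possess a least element, as demanded by Lemma~\ref{direct-limit-quotient-lemma}. A chain is automatically directed under inclusion, so the only missing ingredient is a minimum. I would adjoin the identity congruence $1_X$ to the chain: it lies below every $\rho_i$, and it is itself $\X$-pure because $X/1_X\cong X\in\X$ and $\X$ is closed under isomorphism. This enlargement leaves $\rho=\bigcup_i\rho_i$ unchanged while supplying a minimum element, so the hypotheses of Lemma~\ref{direct-limit-quotient-lemma} are satisfied by the direct system $(X/\rho_i,\phi_{i,j})$, where $\phi_{i,j}\colon a\rho_i\mapsto a\rho_j$ for $\rho_i\subseteq\rho_j$.

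With this in place the deduction is immediate. Lemma~\ref{direct-limit-quotient-lemma} identifies $X/\rho$ as the directed colimit of $(X/\rho_i,\phi_{i,j})$, each term $X/\rho_i$ lies in $\X$, and closure of $\X$ under directed colimits forces $X/\rho\in\X$. Hence $\rho$ is $\X$-pure and the set of $\X$-pure congruences on $X$ is closed under unions of chains. For the concluding assertion I would simply observe that Proposition~\ref{direct-limit-sf-proposition} and Proposition~\ref{direct-limit-p-proposition} state exactly that $\SF$ and $\CP$ are closed under directed colimits, so both classes fall under the general result.

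There is no serious obstacle here; the argument is a routine transport of the problem into the language of directed colimits. The only point requiring care is the least-element hypothesis of Lemma~\ref{direct-limit-quotient-lemma}, which is why the verification that $1_X$ is $\X$-pure (and therefore a legitimate minimum within the chain of $\X$-pure congruences) is the small but essential step. One could instead bypass the adjunction of $1_X$ by checking directly, via the colimit characterisation following Lemma~\ref{directlimit-lemma}, that $X/\rho$ satisfies the two defining properties of the directed colimit of the $X/\rho_i$; but routing the argument through Lemma~\ref{direct-limit-quotient-lemma} is cleaner.
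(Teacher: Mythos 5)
Your proposal is correct and follows essentially the same route as the paper, which deduces the theorem immediately from Lemma~\ref{direct-limit-quotient-lemma} together with Propositions~\ref{direct-limit-sf-proposition} and~\ref{direct-limit-p-proposition}. Your only addition is to make explicit the minimum-element hypothesis of Lemma~\ref{direct-limit-quotient-lemma} by adjoining the ($\X$-pure, since $X\in\X$) identity congruence $1_X$ to the chain, a detail the paper glosses over but which is handled exactly in the spirit of its argument.
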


Recall that an act $X$ is said to be {\em locally cyclic} if for all $x,y \in X$ there exists $z\in X, s,t\in S$ with $x=zs, y = zt$. By~\cite[Theorem 3.7]{renshaw-00} the indecomposable acts in $\CP$ and $\SF$ are the locally cyclic acts.

\begin{lemma}\label{condition-p-system-lemma}
Let $S$ be a monoid and suppose that $X$ satisfies condition $(P)$ and suppose we have a system of equations
$$
\begin{array}{rcl}
x_1s_1&=&x_2t_2\\
x_2s_2&=&x_3t_3\\
&\ldots&\\
x_{n-1}s_{n-1}&=&x_nt_n\\
\end{array}
$$
where $x_i\in X, s_i,t_i\in S$. Then there exists $y\in X, u_i\in S$ such that for $1\le i\le n-1$ we have $x_i=yu_i$ and $u_is_i=u_{i+1}t_{i+1}$.
\end{lemma}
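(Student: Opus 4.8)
The plan is to prove the statement by induction on $n$, using condition $(P)$ exactly once at each stage to merge the newest equation into a single common ancestor for all the $x_i$ encountered so far. Throughout I aim to produce $y\in X$ together with elements $u_1,\ldots,u_n\in S$ satisfying $x_i=yu_i$ for $1\le i\le n$ and $u_is_i=u_{i+1}t_{i+1}$ for $1\le i\le n-1$ (the index $i=n-1$ of this second family being exactly the last relation of the displayed system).

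For the base case $n=2$ there is a single equation $x_1s_1=x_2t_2$, which is precisely the hypothesis of condition $(P)$ applied with $a=x_1$, $u=s_1$, $a'=x_2$, $u'=t_2$. This yields $y\in X$ and $u_1,u_2\in S$ with $x_1=yu_1$, $x_2=yu_2$ and $u_1s_1=u_2t_2$, as required.

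For the inductive step I would apply the inductive hypothesis to the shorter system consisting of the first $n-2$ equations (in the variables $x_1,\ldots,x_{n-1}$) to obtain $y'\in X$ and $v_1,\ldots,v_{n-1}\in S$ with $x_i=y'v_i$ for $1\le i\le n-1$ and $v_is_i=v_{i+1}t_{i+1}$ for $1\le i\le n-2$. The key move is then to rewrite the final equation $x_{n-1}s_{n-1}=x_nt_n$ as $y'(v_{n-1}s_{n-1})=x_nt_n$ and apply condition $(P)$ to it, with $a=y'$, $u=v_{n-1}s_{n-1}$, $a'=x_n$, $u'=t_n$. This produces a new common element $z\in X$ and $p,q\in S$ with $y'=zp$, $x_n=zq$ and $p(v_{n-1}s_{n-1})=qt_n$. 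Setting $y=z$, $u_i=pv_i$ for $1\le i\le n-1$ and $u_n=q$ then gives $x_i=y'v_i=zpv_i=yu_i$ for $i\le n-1$ and $x_n=yu_n$.

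It remains to verify the chain relations, and this is where the only genuine bookkeeping lies. For $1\le i\le n-2$ I multiply the inherited relation $v_is_i=v_{i+1}t_{i+1}$ on the left by $p$ to get $u_is_i=pv_is_i=pv_{i+1}t_{i+1}=u_{i+1}t_{i+1}$, while the final relation $u_{n-1}s_{n-1}=pv_{n-1}s_{n-1}=qt_n=u_nt_n$ comes directly from the equation produced by condition $(P)$. The main obstacle is therefore not any deep difficulty but the care needed to route the single new application of $(P)$ through the factorisation $x_{n-1}=y'v_{n-1}$ and to confirm that absorbing the prefactor $p$ into every $v_i$ preserves all of the earlier relations simultaneously.
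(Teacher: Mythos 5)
Your proof is correct, and its inductive step takes a genuinely different (and slightly more economical) route than the paper's. Both arguments proceed by induction on $n$ with the identical base case, but the paper handles the new equation in two stages: it first applies condition $(P)$ to $x_ns_n=x_{n+1}t_{n+1}$ on its own, producing a second local ancestor $y'$ of $x_n$ and $x_{n+1}$, and then --- since $x_n$ now has two factorisations $x_n=yu_n=y'u'_n$ through the old ancestor $y$ (from the inductive hypothesis) and the new one $y'$ --- it applies $(P)$ a second time to merge $y$ and $y'$ into a common refinement $z$. You avoid the merging step entirely by substituting the inductive factorisation $x_{n-1}=y'v_{n-1}$ into the last equation before invoking $(P)$, so that a single application to $y'(v_{n-1}s_{n-1})=x_nt_n$ directly yields a common ancestor $z$ of $y'$ (hence of all of $x_1,\dots,x_{n-1}$) and of $x_n$; this is legitimate since $v_{n-1}s_{n-1}$ is just an element of $S$. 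The remaining bookkeeping (absorbing the prefactor $p$ into every $v_i$) is the same in both proofs, the paper doing it with its elements $pu_i$. What the paper's longer route exhibits is the standard ``two factorisations of one element admit a common refinement'' device for condition $(P)$ acts, which recurs elsewhere in the subject, but for this lemma your shortcut is perfectly adequate, uses one application of $(P)$ per step instead of two, and proves the same strong form of the conclusion ($x_i=yu_i$ for all $1\le i\le n$, including $i=n$) that the paper's proof establishes and later needs.
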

\begin{proof}
We prove this by induction on $n$. Suppose then that $n=2$. Then our system is
$$
x_1s_1=x_2t_2
$$
and condition $(P)$ means there exists $y\in X, u_1,u_2\in S$ with $x_1=yu_1, x_2=yu_2$ and $u_1s_1=u_2t_2$ as required.

Suppose then that the result is true for $i\le n$ and suppose that we have a system of equations
$$
\begin{array}{rcl}
x_1s_1&=&x_2t_2\\
x_2s_2&=&x_3t_3\\
&\ldots&\\
x_{n-1}s_{n-1}&=&x_nt_n\\
x_ns_n&=&x_{n+1}t_{n+1}.\\
\end{array}
$$
By induction there exists $y\in X, u_i\in S$ such that for $1\le i\le n$ we have $x_i=yu_i$ and for $1\le i\le n-1$, $u_is_i=u_{i+1}t_{i+1}$. In addition, condition $(P)$ means there exists $y'\in X, u'_n,v'_n\in S$ with $x_n=y'u'_n, x_{n+1}=y'v'_n$ and $u'_ns_n=v'_nt_{n+1}$. But then $x_n=yu_n=y'u'_n$ and so there exists $z\in X, p,q\in S$ with $y=zp, y'=zq$ and $pu_n=qu'_n$. Hence for $1\le i\le n$ it follows that $x_i=z(pu_i)$ and for $1\le i\le n-1$,  $(pu_i)s_i=(pu_{i+1})t_{i+1}$. While $x_{n+1}=z(qv'_n)$ and $(pu_n)s_n = qu'_ns_n = (qv'_n)t_{n+1}$ as required.
\end{proof}

\smallskip
The following was suggested to us by Philip Bridge~\cite{bridge}. For a version involving more general categories see \cite{bridge-thesis}.

\begin{proposition}[{Cf. \cite[Theorem 5.21]{bridge-thesis}}]\label{fgt-proposition}
Let S be a monoid and suppose that $S$ satisfies the following property
$$
\forall s\in S\ \exists k\in\n\text{ such that }\forall m\in S\ |\{p\in S|ps=m\}|\le k.
$$
Then every $S-$act has an $\SF-$cover and a $\CP-$cover.
\end{proposition}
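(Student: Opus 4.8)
The plan is to reduce the statement to the existence of precovers and then feed it into the machinery of Section~4. Since $\SF$ and $\CP$ are closed under directed colimits (Propositions~\ref{direct-limit-sf-proposition} and~\ref{direct-limit-p-proposition}), the earlier theorem asserting that an $\X$-precover yields an $\X$-cover applies to both classes; so it suffices to prove that, under the hypothesis, every $S$-act has an $\SF$-precover and a $\CP$-precover. For this I would verify the three conditions of Corollary~\ref{lambda-skeleton-corollary}. Conditions (1) and (2) hold for $\X=\SF$ and $\X=\CP$ as already observed at the start of this section (coproduct decomposition from Lemma~\ref{coproduct-decompose}, and $S\in\X$). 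Everything therefore reduces to condition (3): exhibiting a single cardinal $\lambda$ bounding $|X|$ for every indecomposable $X$ in the class. Since the indecomposable acts of $\SF$ and $\CP$ are precisely the locally cyclic ones and $\SF\subseteq\CP$, it is enough to bound $|X|$ for an arbitrary locally cyclic $X$ satisfying condition $(P)$.

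The heart of the argument is to show that the finite-to-one hypothesis on $S$ forces the right action on such an $X$ to have uniformly finite fibres. Fix $s\in S$ and take $k=k_s$ as in the hypothesis. Given $x_0\in X$, suppose $x_1,\dots,x_N\in X$ are distinct with $x_rs=x_0$ for all $r$. I would chain the equalities as $x_1s=x_2s,\ x_2s=x_3s,\dots,x_{N-1}s=x_Ns$ and apply Lemma~\ref{condition-p-system-lemma} to obtain a single $y\in X$ and $u_1,\dots,u_N\in S$ with $x_r=yu_r$ and $u_1s=u_2s=\dots=u_Ns$. Writing $w$ for this common value, the $u_r$ all lie in $\{p\in S\mid ps=w\}$, a set of size at most $k$; since $x_r=yu_r$ is determined by $u_r$, distinctness of the $x_r$ forces distinctness of the $u_r$, so $N\le k$. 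Hence $|\{x\in X\mid xs=x_0\}|\le k_s$ for every $s\in S$ and every $x_0\in X$.

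With finite fibres in hand the cardinal bound is routine. Fix any $x_0\in X$. Local cyclicity provides, for each $x\in X$, a common ancestor $z$ with $x_0=za$ and $x=zb$, so $X=\{zb\mid z\in\mathrm{Anc}(x_0),\,b\in S\}$, where $\mathrm{Anc}(x_0)=\{z\in X\mid\exists a\in S,\ za=x_0\}$. But $\mathrm{Anc}(x_0)=\bigcup_{a\in S}\{z\mid za=x_0\}$ is a union of $|S|$ fibres each of size at most $k_a$, whence $|\mathrm{Anc}(x_0)|\le\max(|S|,\aleph_0)$ and so $|X|\le|\mathrm{Anc}(x_0)|\cdot|S|\le\max(|S|,\aleph_0)$. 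Taking $\lambda=\max(|S|,\aleph_0)^+$ verifies condition (3) for both $\SF$ and $\CP$, and the proposition follows.

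The one genuinely delicate point is the fibre bound of the second paragraph: the hypothesis is a statement purely about right multiplication in $S$, whereas what is needed is a finiteness statement about the action on the act $X$. The chain-system form of condition $(P)$ recorded in Lemma~\ref{condition-p-system-lemma} is exactly what makes this transfer possible, by collapsing the finitely many elements of a fibre onto a single generator $y$ so that the monoid-level counting hypothesis can be invoked; once that is established, the passage to locally cyclic acts via the ancestor decomposition and the final cardinal arithmetic are purely formal.
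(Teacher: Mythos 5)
Your proof is correct and follows essentially the same route as the paper's: both reduce to bounding the cardinality of locally cyclic condition $(P)$ acts for Corollary~\ref{lambda-skeleton-corollary}, use the same chaining of equalities $x_1s=x_2s,\dots,x_{N-1}s=x_Ns$ fed into Lemma~\ref{condition-p-system-lemma} to transfer the monoid-level finiteness hypothesis to a fibre bound on the act, and then conclude by the same $\max(|S|,\aleph_0)$ counting. The only differences are cosmetic: you state the fibre bound directly as $N\le k$ rather than deriving a contradiction from $k+1$ distinct preimages, and you make explicit the precover-to-cover step via closure under directed colimits, which the paper leaves implicit.
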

\begin{proof}
We show that every indecomposable $S-$act that satisfies condition $(P)$ (and hence every strongly flat indecomposable $S-$act) has a bound on its cardinality. Let X be an indecomposable $S-$act  which satisfies condition $(P)$. Then it is locally cyclic and so for all $x, y\in X$ there exists $z\in X, s, t\in S$ such that $x = zs, y = zt$.
$$
\begin{tikzpicture}[description/.style={fill=white,inner sep=2pt}]
\matrix (m) [matrix of math nodes, row sep=3em,
column sep=1.5em, text height=1.5ex, text depth=0.25ex]
{x&&y\\
 &z&\\};
\path[->]
(m-2-2) edge node[auto, left] {$s$} (m-1-1)
(m-2-2) edge node[auto,right] {$t$} (m-1-2);
\end{tikzpicture}
$$
Now we fix $x\in X$ and consider how many possible $y\in X$ could satisfy these equations. Firstly we take a fixed $s\in S$ and consider how many possible $z\in X$ could satisfy $x = zs$. By the hypothesis, there exists $k\in\n$ such that for any $m \in S$ $|\{p\in S : ps = m\}| \le k$. Let us suppose that there are at least $k + 1$ distinct $z$ such that $x = zs$. That is, $x = z_1s = z_2s = \ldots = z_{k+1}s$. Then by Lemma~\ref{condition-p-system-lemma} there exists $w \in X, p_1, \ldots ,p_{k+1}\in S$ such that $p_1s = \ldots = p_{k+1}s$ and $z_i = wp_i$ for each $i\in \{1, \ldots,k + 1\}$.
$$
\begin{tikzpicture}[description/.style={fill=white,inner sep=2pt}]
\matrix (m) [matrix of math nodes, row sep=3em,
column sep=2.5em, text height=1.5ex, text depth=0.25ex]
{&&x&&\\
 z_1&z_2&\ldots&z_k&z_{k+1}\\
 &&w&&\\};
\path[->]
(m-2-1) edge node[auto, left] {$s$} (m-1-3)
(m-2-2) edge node[auto,right] {$s$} (m-1-3)
(m-2-4) edge node[auto,left] {$s$} (m-1-3)
(m-2-5) edge node[auto,right] {$s$} (m-1-3)
(m-3-3) edge node[auto,left] {$p_1\ $} (m-2-1)
(m-3-3) edge node[auto,right] {$\ p_2$} (m-2-2)
(m-3-3) edge node[auto,left] {$p_k\ $} (m-2-4)
(m-3-3) edge node[auto,right] {$\ p_{k+1}$} (m-2-5);
\end{tikzpicture}
$$
However, by the hypothesis this means at least two $p_i$ are equal and hence at least two $z_i$ are equal which is a contradiction. So given some fixed $s\in S$ there are at most $k$ possible $z$ such that $x = zs$. Hence, there are no more than $\aleph_0|S|$ possible $z\in X, s\in S$ such that $x = zs$. Similarly, given a fixed $z\in X$, there are at most $|S|$ possible $t\in S$ such that $zt = y$ and hence there are no more than $\aleph_0|S|^2$ possible elements in $X$. So the result follows by Corollary \ref{lambda-skeleton-corollary}.
\end{proof}

A finitely generated monoid that satisfies this property is said to have {\em finite geometric type} (\cite{silva-04}). Let $B$ be the bicylic monoid and let $(s,t) \in B$. Suppose that $(m,n)\in B$ is fixed and suppose that $(p,q) \in B$ is such that $(p,q)(s,t)=(m,n)$. We count the number of solutions to this equation. Recall that
$$
(p,q)(s,t)=(p-q+\max(q,s),t-s+\max(q,s))=(m,n).
$$
If $q \ge s$ then $(p,q)=(m,n-(t-s))$ and there is at most one solution to the equation. Otherwise $(p,q)=(m-s+q,q)$ where $q$ ranges between $0$ and $s-1$. There are therefore at most $s+1$ possible values of $(p,q)$ that satisfy the equation and so $B$ has finite geometric type. Hence we deduce

\begin{proposition}
Let $S$ be the bicyclic monoid. Then all $S-$acts have an $\SF-$cover and a $\CP-$cover.
\end{proposition}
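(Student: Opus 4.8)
The plan is to invoke Proposition~\ref{fgt-proposition}, which reduces the whole problem to a single combinatorial check on the bicyclic monoid $B$: that for each fixed $(s,t)\in B$ there is a bound $k\in\n$ on the number of left factors $(p,q)$ solving $(p,q)(s,t)=(m,n)$, uniformly over all targets $(m,n)$. This is exactly the finite geometric type condition. The heavy lifting --- the transfinite precover-to-cover construction, the closure of $\SF$ and $\CP$ under directed colimits, and the cardinality bound of Corollary~\ref{lambda-skeleton-corollary} --- is already packaged inside Proposition~\ref{fgt-proposition}, so once the counting is complete there is nothing more to prove.

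First I would fix $(s,t)$ and an arbitrary $(m,n)$ and analyse the defining equation
$$
(p,q)(s,t)=(p-q+\max(q,s),\,t-s+\max(q,s))=(m,n)
$$
according to the value of $\max(q,s)$. When $q\ge s$ the equation collapses to $(p,\,t-s+q)=(m,n)$, which pins down both $p=m$ and $q=n-t+s$, giving at most one solution. When $q<s$ it becomes $(p-q+s,\,t)=(m,n)$, forcing $p=m-s+q$ while leaving $q$ free only in the finite range $0\le q\le s-1$, giving at most $s$ solutions.

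Summing the two cases, there are at most $s+1$ solutions, a bound independent of $(m,n)$; taking $k=s+1$ therefore witnesses finite geometric type for $B$. Feeding this into Proposition~\ref{fgt-proposition} immediately yields an $\SF$-cover and a $\CP$-cover for every $S$-act. The only real obstacle is the case split in the bicyclic multiplication, and it is mild: the point is that in the branch $q<s$ the second coordinate equation $t-s+\max(q,s)=t=n$ kills one degree of freedom and traps the remaining parameter $q$ in a set of size $s$, which is precisely what makes the factor count finite and uniform.
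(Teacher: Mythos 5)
Your proposal is correct and follows essentially the same route as the paper: both verify that the bicyclic monoid has finite geometric type by case-splitting the equation $(p,q)(s,t)=(m,n)$ on whether $q\ge s$ or $q<s$, obtaining the uniform bound $k=s+1$, and then invoke Proposition~\ref{fgt-proposition}. Your write-up is in fact slightly more explicit than the paper's (noting that in the branch $q<s$ the constraint $t=n$ must hold), but the argument is the same.
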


On letting $k=1$ in Proposition~\ref{fgt-proposition} we can deduce the following corollary.
\begin{corollary}
Let $S$ be a right cancellative monoid. Then every right $S-$act has an $\SF-$cover and a $\CP-$cover.
\end{corollary}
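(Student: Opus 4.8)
The plan is to show that a right cancellative monoid automatically satisfies the counting hypothesis of Proposition~\ref{fgt-proposition} with the uniform bound $k=1$, and then simply invoke that proposition. The whole content of the corollary is this verification; everything else has already been done.

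First I would unwind the definition of right cancellativity: $S$ is right cancellative precisely when $ps=qs$ implies $p=q$ for all $p,q,s\in S$. Now fix any $s\in S$ and any $m\in S$, and examine the set $\{p\in S\mid ps=m\}$. If $p_1$ and $p_2$ both lie in this set, then $p_1s=m=p_2s$, so $p_1s=p_2s$, and right cancellativity forces $p_1=p_2$. Hence $|\{p\in S\mid ps=m\}|\le 1$ for every $m$, and crucially this bound is independent of both $s$ and $m$.

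Consequently $S$ satisfies the property
$$
\forall s\in S\ \exists k\in\n\text{ such that }\forall m\in S\ |\{p\in S\mid ps=m\}|\le k
$$
with the single choice $k=1$. Applying Proposition~\ref{fgt-proposition} then yields immediately that every $S$-act has an $\SF$-cover and a $\CP$-cover, which is exactly the assertion.

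There is essentially no obstacle here: the hypothesis of Proposition~\ref{fgt-proposition} permits $k$ to vary with $s$, whereas right cancellativity delivers the uniform value $k=1$ for all $s$ at once, so the condition holds in its strongest possible form. The corollary is thus a direct specialisation, and the only point worth stating explicitly is the one-line deduction $|\{p\mid ps=m\}|\le 1$ from cancellativity.
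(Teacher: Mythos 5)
Your proof is correct and is exactly the paper's argument: the paper deduces this corollary by "letting $k=1$ in Proposition~\ref{fgt-proposition}", which is precisely your observation that right cancellativity gives $|\{p\in S\mid ps=m\}|\le 1$ for all $s,m\in S$. You have simply spelled out the one-line verification that the paper leaves implicit.
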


\smallskip

It also now follows that not every $\SF-$cover is a strongly flat cover as it was shown in~\cite[Remark 3.6]{renshaw-08} that $(\n,\cdot)$ is a monoid in which the 1-element act $\Theta$ does not have a strongly flat cover. It is however obviously right cancellative.

\medskip

Recall~\cite{isbell-71} that a monoid $S$ is said to satisfy condition $(A)$ if all right S-acts satisfy the ascending chain condition for cyclic subacts. This is equivalent to saying that every locally cyclic right $S-$act is cyclic.

\begin{proposition}
Let $S$ be a monoid that satisfies condition $(A)$. Then every right $S-$act has an $\SF-$cover and a $\CP-$cover.
\end{proposition}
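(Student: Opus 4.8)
The plan is to show that condition $(A)$ forces a uniform cardinality bound on the indecomposable acts in $\SF$ and $\CP$, and then to feed this into the precover machinery already assembled in Sections 4 and 5. First I would recall that, by~\cite[Theorem 3.7]{renshaw-00}, the indecomposable acts in $\SF$ and in $\CP$ are precisely the locally cyclic acts. Since $S$ satisfies condition $(A)$, every locally cyclic right $S-$act is in fact cyclic. Hence every indecomposable act $X\in\SF$ (respectively $X\in\CP$) is cyclic, so $X\cong S/\rho$ for some right congruence $\rho$ on $S$, whence $|X|\le|S|$.

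Next I would fix a cardinal $\lambda>|S|$ and verify the three hypotheses of Corollary~\ref{lambda-skeleton-corollary} for $\X=\SF$ and for $\X=\CP$. Condition (1), that $\dot\bigcup_{i}X_i\in\X\Leftrightarrow X_i\in\X$, holds by Lemma~\ref{coproduct-decompose}. Condition (2), that every $S-$act $A$ admits some $X\in\X$ with $\text{Hom}_S(X,A)\ne\emptyset$, holds because $S$ itself is strongly flat and therefore lies in both classes, as already noted at the start of this section. Condition (3), the uniform bound $|X|<\lambda$ on every indecomposable $X\in\X$, is exactly what the previous paragraph supplies. Corollary~\ref{lambda-skeleton-corollary} then yields that every $S-$act has an $\SF-$precover and a $\CP-$precover.

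Finally, since both $\SF$ and $\CP$ are closed under directed colimits (Propositions~\ref{direct-limit-sf-proposition} and~\ref{direct-limit-p-proposition}), the earlier theorem asserting that the existence of an $\X-$precover implies the existence of an $\X-$cover applies to each class, and the result follows.

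In essence this argument transcribes the scheme already used in Proposition~\ref{fgt-proposition}: bound the size of the indecomposable acts, obtain precovers from a skeleton via Corollary~\ref{lambda-skeleton-corollary}, then upgrade precovers to covers. The only step requiring genuine care is the cardinality bound, and I do not expect it to be a serious obstacle here: condition $(A)$ does all the work by collapsing \emph{locally cyclic} to \emph{cyclic}, and hence to cardinality at most $|S|$, so none of the counting or transfinite bookkeeping needed in Proposition~\ref{fgt-proposition} is required.
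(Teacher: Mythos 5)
Your proof is correct and follows essentially the same route as the paper's: condition $(A)$ collapses the locally cyclic (i.e.\ indecomposable) acts of $\SF$ and $\CP$ to cyclic acts of cardinality at most $|S|$, and Corollary~\ref{lambda-skeleton-corollary} then applies. In fact you are slightly more careful than the paper, which leaves implicit the final upgrade from precovers to covers via closure of $\SF$ and $\CP$ under directed colimits.
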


\begin{proof}
By~\cite[Theorem 3.7]{renshaw-00} the indecomposable acts in $\CP$ and $\SF$ are the locally cyclic acts but since $S$ satisfies condition $(A)$ all the locally cyclic acts are cyclic. If $S/\rho$ is cyclic then clearly $|S/\rho|\le|S|$ and the result follows from Corollary~\ref{lambda-skeleton-corollary}.
\end{proof}

\smallskip

It is well known that not every monoid that satisfies condition $(A)$ is perfect and so we can then deduce that $\Pr-$covers are in general different from $\SF-$covers and $\CP-$covers.
Also given that indecomposable projective acts are cyclic then the indecomposable $S-$acts are bounded in size and so by Corollary~\ref{lambda-skeleton-corollary} we can deduce

\begin{proposition}
Let $S$ be a monoid. Every $S-$act has a $\Pr-$precover.
\end{proposition}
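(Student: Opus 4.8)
The plan is to recognise this as an immediate application of Corollary~\ref{lambda-skeleton-corollary} with $\X=\Pr$, so that the whole task reduces to verifying the three hypotheses of that corollary for the class of projective acts.

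First I would dispatch condition (1). The requirement that $\dot\bigcup_{i\in I}X_i\in\Pr$ if and only if each $X_i\in\Pr$ is precisely the case $\X=\Pr$ of Lemma~\ref{coproduct-decompose}. Condition (2) is equally quick: the one-generator free act $S$ is projective, so $S\in\Pr$, and for any nonempty $S$-act $A$ and fixed $a\in A$ the assignment $s\mapsto as$ gives an element of $\text{Hom}_S(S,A)$, so that Hom-set is nonempty (this mirrors the reasoning already used in the paper for $\X=\SF$ and $\X=\CP$).

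The substantive step, and the one I expect to carry the argument, is condition (3): exhibiting a cardinal $\lambda$ bounding the size of every indecomposable projective act. Here I would appeal to the structure theorem for projective acts, which says that every projective $S$-act is isomorphic to a coproduct $\dot\bigcup_i e_iS$ of cyclic acts $e_iS$ with each $e_i$ an idempotent of $S$. Since any cyclic act is indecomposable (if $aS=B_1\dot\cup B_2$ then the generator $a$ lies in one part, forcing the other to be empty), the indecomposable projective acts are, up to isomorphism, exactly the acts $eS$ with $e^2=e$. Each such act is a homomorphic image of $S$, whence $|eS|\le|S|$, so taking any cardinal $\lambda>|S|$ yields $|X|<\lambda$ for every indecomposable projective $X$.

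With all three hypotheses of Corollary~\ref{lambda-skeleton-corollary} verified for $\X=\Pr$, the conclusion that every $S$-act has a $\Pr$-precover follows at once. The only genuine input beyond the cited corollary is the cyclicity of indecomposable projective acts (equivalently, the coproduct-of-$eS$ structure of projectives), which is where I would concentrate the write-up; everything else is routine citation.
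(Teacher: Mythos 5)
Your proof is correct and is essentially the paper's own argument: the paper likewise deduces the proposition from Corollary~\ref{lambda-skeleton-corollary}, citing the fact that indecomposable projective acts are cyclic (hence of cardinality at most $|S|$, giving the required bound). Your write-up merely makes explicit the verification of conditions (1) and (2) via Lemma~\ref{coproduct-decompose} and the projectivity of $S$, and justifies the cyclicity through the coproduct-of-$eS$ structure theorem, all of which the paper leaves implicit.
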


\begin{lemma}
Let $S$ be a monoid. If $A$ is a right $S-$act and if $k:C\to A$ is a $SF-$cover with $C$ projective, then $C$ is a $\Pr-$cover.
\end{lemma}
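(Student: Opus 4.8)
The plan is to verify directly the two conditions in the definition of a $\Pr$-cover, leaning on the standard inclusion $\Pr\subseteq\SF$ recorded in the preliminaries. Since $C$ is projective we have $C\in\Pr$, so the first requirement --- that the domain lie in the class --- is automatic.

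Next I would check the $\Pr$-precover property. Let $P'\in\Pr$ and let $g':P'\to A$ be an $S$-map. Because $\Pr\subseteq\SF$ we have $P'\in\SF$, and since $k:C\to A$ is by hypothesis an $\SF$-cover (in particular an $\SF$-precover) there is an $S$-map $f:P'\to C$ with $g'=kf$. This is exactly the factorisation demanded of a $\Pr$-precover, so $k:C\to A$ is a $\Pr$-precover. Informally, shrinking the test class from $\SF$ to the smaller class $\Pr$ only weakens the precovering requirement, so it is inherited for free.

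Finally I would observe that the remaining covering condition transfers verbatim. The extra requirement distinguishing a cover from a precover --- that every endomorphism $f:C\to C$ satisfying $kf=k$ be an isomorphism --- refers only to the fixed object $C$ and the fixed map $k$, and makes no reference to the ambient class. Since $k$ is an $\SF$-cover it already satisfies this condition, and the identical condition is precisely what is needed for $k$ to be a $\Pr$-cover. Combining the three observations, $k:C\to A$ is a $\Pr$-cover.

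I do not anticipate a genuine obstacle: the substance of the lemma is merely that the covering (endomorphism) condition is intrinsic to the pair $(C,k)$, while the precovering condition only becomes easier as one passes from $\SF$ to the subclass $\Pr$. One could alternatively invoke the earlier theorem identifying $\Pr$-covers with projective covers and instead argue that $(C,k)$ has no proper subact mapping onto $A$, but the direct verification above is shorter and avoids re-deriving that equivalence.
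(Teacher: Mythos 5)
Your proof is correct and follows essentially the same route as the paper's: the key step in both is that any projective $P'$ lies in $\SF$, so the $\SF$-precover property of $k$ immediately yields the required factorisation through $C$. You are in fact slightly more careful than the paper, which leaves implicit (indeed, garbles in its final clause) the observation that the endomorphism condition defining a cover depends only on the pair $(C,k)$ and therefore transfers verbatim from the $\SF$-cover hypothesis to the $\Pr$-cover conclusion.
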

\begin{proof}
If $P$ is projective and if $g:P\to A$ is an $S-$map then $P$ is strongly flat and so there exists $h:P\to C$ with $kh=g$ and so $P$ is a projective cover.
\end{proof}

Since right perfect monoids satisfy condition $(A)$ then we have
\begin{corollary}
Let $S$ be a right perfect monoid. Then every right $S-$act has an $\SF-$cover.
\end{corollary}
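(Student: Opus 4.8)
The plan is to reduce this to the Proposition established immediately above for monoids satisfying condition $(A)$. That result already shows that any $S$ satisfying condition $(A)$ has the property that every right $S$-act admits an $\SF$-cover, so the only thing left to check is that right perfectness forces condition $(A)$.

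To that end I would invoke the standard characterization of perfect monoids, the semigroup-theoretic analogue of Bass's theorem on perfect rings (see Isbell~\cite{isbell-71} and Fountain~\cite{fountain-76}). Among the conditions known to be necessary for right perfectness is precisely condition $(A)$: every right $S$-act satisfies the ascending chain condition on cyclic subacts, equivalently every locally cyclic right $S$-act is cyclic. Hence a right perfect monoid satisfies condition $(A)$.

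With condition $(A)$ in hand the preceding Proposition applies directly and yields that every right $S$-act has an $\SF$-cover, completing the argument. The only non-routine ingredient is the implication ``right perfect $\Rightarrow$ condition $(A)$'', which I would cite rather than reprove; everything else is an immediate appeal to the earlier Proposition. In particular there is no new colimit or transfinite-induction argument to run here, since all of that machinery has already been absorbed into the condition $(A)$ Proposition via Corollary~\ref{lambda-skeleton-corollary}.
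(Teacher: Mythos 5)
Your proposal is correct and is essentially identical to the paper's own argument: the paper derives this corollary precisely by noting that right perfect monoids satisfy condition $(A)$ (the Isbell--Fountain characterization you cite) and then applying the preceding proposition on condition $(A)$ monoids. Nothing further is needed.
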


In addition since $S$ is right perfect if and only if all strongly flat $S-$acts are projective we have

\begin{corollary}
$S$ is right perfect if and only if every right $S-$act has a projective $\SF-$cover.
\end{corollary}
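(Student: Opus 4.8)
The plan is to establish both implications by combining the uniqueness of $\X$-covers with the characterisation, recorded just above this statement, of right perfect monoids as precisely those over which every strongly flat act is projective.

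For the forward direction I would assume $S$ is right perfect and appeal to the preceding corollary, which already yields that every right $S$-act $A$ admits an $\SF$-cover $k:C\to A$. Since $C\in\SF$ and $S$ is right perfect, $C$ is projective, so $k$ is itself a projective $\SF$-cover. Nothing further is needed in this direction.

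For the converse, suppose every right $S$-act has a projective $\SF$-cover; it then suffices to show that every $A\in\SF$ is projective, since this is exactly the condition characterising right perfection. Given such an $A$, let $k:C\to A$ be a projective $\SF$-cover, so that $C$ is projective. Because $A$ is strongly flat, the identity map $1_A:A\to A$ is also an $\SF$-cover of $A$, using the observation that an act is an $\X$-cover of itself exactly when it lies in $\X$. Thus $A$ carries two $\SF$-covers, namely $k$ and $1_A$, and by the uniqueness theorem \cite[Theorem 5.8]{renshaw-08} there is an isomorphism $C\cong A$; hence $A$ is projective. Therefore every strongly flat act is projective and $S$ is right perfect.

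The argument is short and its individual steps are routine once the earlier results are assembled. The only point requiring genuine attention is verifying that both $k$ and $1_A$ qualify as $\SF$-\emph{covers} rather than merely as precovers, so that the uniqueness theorem legitimately applies: for $k$ this is the standing hypothesis, and for $1_A$ it is forced by $A\in\SF$. Beyond this bookkeeping I expect no substantial obstacle.
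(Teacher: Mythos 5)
Your proof is correct, and your forward direction is exactly the paper's: combine the preceding corollary (right perfect monoids satisfy condition $(A)$, hence every right $S$-act has an $\SF$-cover) with the fact that over a right perfect monoid every strongly flat act is projective, so the cover obtained is automatically projective. For the converse, however, you take a genuinely different route from the one the paper sets up. The lemma proved immediately before this corollary --- that an $\SF$-cover $k:C\to A$ with $C$ projective is a $\Pr$-cover --- together with the quoted theorem that a $\Pr$-cover is precisely a projective cover, gives the intended argument: if every act has a projective $\SF$-cover, then every act has a projective cover, which is the definition of a (right) perfect monoid. You instead fix a strongly flat act $A$, observe that $1_A:A\to A$ is an $\SF$-cover because $A\in\SF$, and invoke the uniqueness theorem for covers to conclude $A\cong C$, hence $A$ is projective; you then finish with the characterisation that $S$ is right perfect if and only if every strongly flat act is projective. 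Both arguments are sound and short. The paper's route needs only the displayed lemma and the definition of perfection, with no appeal to cover uniqueness; yours bypasses the lemma entirely, at the cost of using the uniqueness theorem. Note also that an even lighter variant of your step is available: the precover property applied to $1_A$ yields $f:A\to C$ with $kf=1_A$, so $A$ is a retract of the projective act $C$ and hence projective, requiring neither the lemma nor uniqueness of covers.
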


\medskip

From~\cite[Example 2.9, Example 2.10]{khosravi-10} we can deduce
\begin{theorem}
The following classes of monoids satisfy condition $(A)$ and so every right $S-$act over such a monoid has an $\SF-$cover and a $\CP-$cover.
\begin{enumerate}
\item finite monoids,
\item rectangular bands with a 1 adjoined,
\item right groups with a 1 adjoined,
\item right simple semigroups with a 1 adjoined,
\item $(\n,\max)$.
\end{enumerate}
\end{theorem}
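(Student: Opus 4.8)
The plan is to split the statement into its two halves. The clause ``and so every right $S-$act over such a monoid has an $\SF-$cover and a $\CP-$cover'' is not new: it is precisely the earlier Proposition asserting that a monoid satisfying condition $(A)$ has $\SF-$covers and $\CP-$covers for all its acts. Hence, once we know each of the five classes satisfies condition $(A)$, the cover statement follows at once and uniformly, and the whole theorem reduces to verifying condition $(A)$ for each class. For this I would use the equivalent formulation recalled just before that Proposition, namely that $S$ satisfies $(A)$ if and only if every locally cyclic right $S-$act is cyclic, equivalently that every right $S-$act satisfies the ascending chain condition on cyclic subacts. These five verifications are essentially the content of \cite[Example 2.9, Example 2.10]{khosravi-10}, but I would also carry them out directly as follows.

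For finite monoids the argument is a cardinality count: every cyclic subact $zS$ of every right $S-$act has $|zS|\le|S|<\infty$. Given a locally cyclic act $A$ and a fixed $a\in A$, I would choose $z$ with $a\in zS$ for which $|zS|$ is maximal, which is possible since these cardinalities lie in $\{0,1,\dots,|S|\}$. For any $y\in A$, applying local cyclicity to the pair $z,y$ yields $w$ with $z,y\in wS$; then $zS\subseteq wS$ and $a\in wS$, so maximality forces $wS=zS$ and hence $y\in zS$. Thus $A=zS$ is cyclic, and finite monoids satisfy $(A)$.

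The three structured cases (2)--(4) I would treat together. Since a right group $G\times R$ satisfies $(g,r)(G\times R)=G\times R$, it is right simple, so a right group with $1$ adjoined is a special instance of a right simple semigroup with $1$ adjoined; write $S=T^1$ with $T$ right simple. The crucial rigidity is that for $a$ in an act $A$ and any $b\in aT$ one has $bT=aT$, so $aS=aT\cup\{a\}$ with the ``$T-$part'' generated by each of its elements. A short case analysis of an ascending chain of cyclic subacts $a_iS$, according to whether $a_i=a_{i+1}$ or $a_i\in a_{i+1}T$, then shows that at most one strict inclusion can occur, so the ascending chain condition holds. For a rectangular band $L\times R$ with $1$ adjoined, the product $(l_1,r_1)(l_2,r_2)=(l_1,r_2)$ forces each set $a(L\times R)$ to collapse onto ``rows'' of the form $\{a(l,r'):r'\in R\}$; an entirely analogous, if more bookkeeping-intensive, analysis of this row structure again bounds the length of ascending chains of cyclic subacts and yields $(A)$.

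The case $(\n,\max)$ is the one I expect to be the real obstacle, since here cyclic subacts are no longer bounded in size (in the regular act $nS=\{m:m\ge n\}$ is infinite), so the cardinality argument of the finite case breaks down and one must instead exploit the idempotent, well-ordered structure. The two facts I would use are that each $n\in S$ acts idempotently and that $\n$ is well-ordered. Given an ascending chain $a_1S\subseteq a_2S\subseteq\cdots$, I would write $a_i=a_{i+1}\cdot n_i$ and use idempotency to re-express every $a_iS$ over a single generator $a_j$ in the form $a_iS=\{a_j\cdot p : p\ge\max(n_i,\dots,n_{j-1})\}$. A strict inclusion at step $i$ then forces this threshold to drop strictly, which makes the $n_i$ a strictly decreasing sequence of positive integers; since no such sequence is infinite, the chain stabilises and condition $(A)$ holds. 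Assembling the five verifications and invoking the Proposition on condition $(A)$ then gives the theorem.
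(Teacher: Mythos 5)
Your proposal is correct, and at the top level it is exactly the paper's route: the paper offers no argument beyond citing \cite[Example 2.9, Example 2.10]{khosravi-10} for the condition-$(A)$ verifications and then invoking the preceding Proposition that condition $(A)$ gives every right $S-$act an $\SF-$cover and a $\CP-$cover. Where you genuinely differ is that you prove from scratch the five facts the paper outsources, and your arguments are sound. The finite case (maximal cyclic subact through a fixed point) works. Your reduction of right groups to right simple semigroups is valid, and the rigidity you isolate for $S=T^1$ with $T$ right simple — $b\in aT$ implies $bT=aT$, so every element in the image of the $T-$action generates the same cyclic subact — in fact yields a one-line bound on chains: if $a_jS\subsetneq a_{j+1}S$ then $a_j\in a_{j+1}T$, whence $a_jS=a_jT$ and $cS=a_jS$ for every $c\in a_jS$, so no strict inclusion $a_iS\subsetneq a_{i+1}S$ can occur at any earlier index $i<j$; the same principle, with $bT$ a ``row'' $\{a(l,r'):r'\in R\}$, completes your admittedly sketched rectangular-band case, so that sketch does close. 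For $(\n,\max)$ your argument is right, and it correctly sidesteps the one subtlety: writing $a_iS=\{a_jp : p\ge N_{i,j}\}$ with $N_{i,j}=\max(n_i,\dots,n_{j-1})$, equality of thresholds gives equality of the sets themselves (no injectivity of the action is needed), so strictness at step $i$ forces $n_i>\max(n_{i+1},\dots,n_{j-1})$, and an infinite strictly ascending chain would produce a strictly decreasing sequence in $\n$, which is impossible. What your route buys is a self-contained theorem, independent of \cite{khosravi-10}, plus a uniform treatment of classes (2)--(4); what the paper's citation buys is brevity.
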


The previous results rely on us showing that the indecomposable strongly flat $S-$acts are bounded in size and hence the class of indecomposable strongly flat $S-$acts forms a set. We show there exists a monoid $S$ with a proper class of indecomposable strongly flat acts by constructing an indecomposable strongly flat act of arbitrary cardinality.

\begin{example}\label{fulltransform-example}
\rm Let $S=\mathcal{T}(\mathbb{N})$ be the full transformation monoid over the set of natural numbers and let $\phi:\n\times\n\to\n$ be a bijection of sets. For convenience, we write maps on the right. Given any set $X \neq \emptyset$, let $A_X=\{f:X \rightarrow \n\}$ be the set of all maps from $X$ to $\n$. We can make $A_X$ into an $S$-act by composition of maps - for $f\in A_X, s\in S$ define $fs\in A_X$ by $x(fs) = (xf)s$. Given any $f,g \in A_X$, let $h \in A_X$ be defined as $xh=(xf,xg)\phi$. Then define $u,v \in S$ to be $u=\phi^{-1}\pi_1$ and $v=\phi^{-1}\pi_2$, where $(x,y)\pi_1=x$ and $(x,y)\pi_2=y$. Therefore $f=hu$, $g=hv$ and $A_X$ is locally cyclic (hence indecomposable) and has cardinality at least $|X|$.
We now show $A_X$ is strongly flat. Let $f,g \in A_X$, $s,t \in S$ such that $fs=gt$. Define $h \in A_X$ as before, pick some $x \in X$ and define $u_x,v_x \in S$ by
\begin{align*}
nu_x=\left\{
\begin{array}{lr}
nu & \text{if $n \in $ im($h$)}\\
xf & \text{otherwise}
\end{array}\right. \\ \\
nv_x=\left\{
\begin{array}{lr}
nv & \text{if $n \in $ im($h$)}\\
xg & \text{otherwise.}
\end{array}\right.
\end{align*}
Then $f=hu_x$, $g=hv_x$ and $u_xs=v_xt$, so $A_X$ satisfies condition $(P)$.
Let $f \in A_X$, $s,t \in S$ such that $fs=ft$. Pick some $x \in X$ and define $w \in S$,
\begin{align*}
nw=\left\{
\begin{array}{lr}
n & \text{if $n \in $ im($f$)}\\
xf & \text{otherwise.}
\end{array}\right.
\end{align*}
Then $f=fw$ and $ws=wt$, so $A_X$ satisfies condition $(E)$ and is strongly flat.
\end{example}

\bigskip

Let $T$ be a monoid and let $S$ be a submonoid of $T$. If $X$ is an $S-$act that satisfies condition $(P)$ then $X\tensor_ST$ is a $T-$act and $X\to X\tensor_ST$ given by $x\mapsto x\tensor1$ is an $S-$monomorphism (since $X$ is flat). Moreover if $X$ is locally cyclic then so is $X\tensor_ST$ since if $x_1\tensor t_1,x_2\tensor t_2\in X\tensor_ST$ then there exists $z\in X, u_1,u_2\in S$ with $x_1 = zu_1, x_2=zu_2$ and so $x_1\tensor t_1=z\tensor u_1t_1 = (z\tensor1)u_1t_1$ and similarly $x_2\tensor t_2=(z\tensor1)u_2t_2$.

Finally we can also deduce that $X\tensor_ST$ satisfies condition $(P)$ as if $(x\tensor t_1)r_1=(x'\tensor t_2)r_2$ then there exist $x_2,\ldots, x_n\in X, u_2,\ldots, u_n,v_2\ldots, v_n\in S, p_2,\ldots p_{n-1}\in T$ such that
$$
\begin{array}{rclcrcl}
x&=&x_2u_2&\quad&u_2t_1r_1&=&v_2p_2\\
x_2v_2&=&x_3u_3&\quad&u_3p_2&=&v_3p_3\\
&&&\ldots&\\
x_{n-1}v_{n-1}&=&x_nu_n&\quad&u_np_{n-1}&=&v_nt_2r_2\\
x_nv_n&=&x'\\
\end{array}
$$
and so by Lemma~\ref{condition-p-system-lemma} there exists $y\in X$ and $w_i\in S$ such that $x_i=yw_i$ and $w_iv_i=w_{i+1}u_{i+1}$ ($x=yw_1, w_1=w_2u_2$ and $x'=yw_{n+1}, w_nv_n=w_{n+1}$) and so we have a scheme of the form
$$
\begin{array}{rclcrcl}
x=yw_1&=&yw_2u_2&\quad&u_2t_1r_1&=&v_2p_2\\
yw_2v_2&=&yw_3u_3&\quad&u_3p_2&=&v_3p_3\\
&&&\ldots&\\
yw_{n-1}v_{n-1}&=&yw_nu_n&\quad&u_np_{n-1}&=&v_nt_2r_2\\
yw_nv_n&=&yw_{n+1}=x'\\
\end{array}
$$
Hence $x\tensor t_1=(y\tensor1)w_1t_1, x'\tensor t_2=(y\tensor1)w_{n+1}t_2$ and 
$$
(w_1t_1)r_1 = w_2u_2t_1r_1 = w_2v_2p_2=w_3u_3p_2=w_3v_3p_3=\ldots=w_{n-1}v_{n-1}p_{n-1}=
$$
$$
w_nu_np_{n-1}=w_nv_nt_2r_2=(w_{n+1}t_2)r_2.
$$

In a similar way, if $X$ is strongly flat then whenever $(x\tensor t)r_1=(x\tensor t)r_2$ in $X\tensor_ST$ we can proceed as above and deduce the existence of a scheme
$$
\begin{array}{rclcrcl}
x=yw_1&=&yw_2u_2&\quad&u_2tr_1&=&v_2p_2\\
yw_2v_2&=&yw_3u_3&\quad&u_3p_2&=&v_3p_3\\
&&&\ldots&\\
yw_{n-1}v_{n-1}&=&x_nu_n&\quad&u_np_{n-1}&=&v_ntr_2\\
yw_nv_n&=&yw_{n+1}=x.\\
\end{array}
$$
Now since $yw_1=yw_{n+1}$ and since $X$ satisfies condition $(E)$ then there exists $z\in X, u\in S$ with $y=zu$ and $uw_1=uw_{n+1}$ and so $x\tensor t = (z\tensor1)uw_1t$ and as before $(uw_1t)r_1=\ldots=uw_{n+1}tr_2=(uw_1t)r_2$ and so $X$ satisfies condition $(E)$ as well.

\smallskip

Let $T$ be a monoid that satisfies condition $(A)$, let $S$ be a left pure submonoid of $T$ (in the sense that the inclusion $S\to T$ is a left pure $S-$monomorphism) and let $X$ be a locally cyclic right $S-$act. Then from above we see that $X\tensor_ST$ is a locally cyclic right $T-$act and so is cyclic. Hence there exists $x_0\in X, t_0\in T$ such that $X\tensor_ST\cong(x_0\tensor t_0)T$. We show that $X$ is also cyclic. First, we say that a left $S-$monomorphism $f:C\to D$ is {\em stable} if for all right $S-$monomorphisms $\lambda:A\to B$
$$
\im(1_B\tensor f)\cap\im(\lambda\tensor1_D) = \im(\lambda\tensor f).
$$
It was shown in~\cite[Theorem 3.1]{renshaw-91} that left pure monomorphisms are stable. In particular the above remarks hold when $\lambda:x_0S\to X, f:S\to T$ are the inclusions. Consequently if $x\in X$ then $x\tensor 1=x_0\tensor t$ in $X\tensor_S T$ for some $t\in T$. Hence there exists $s\in S$ such that $x\tensor1=x_0s\tensor1$ in $X\tensor_ST$ and since $X\to X\tensor_ST$ is a monomorphism, by left purity of $S\to T$, then $x=x_0s$ as required.

\smallskip

Hence we can deduce
\begin{proposition}
The class of monoids that satisfy condition $(A)$ is closed under the taking of left pure submonoids.
\end{proposition}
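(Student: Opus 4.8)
The plan is to use the characterisation, recalled just before the statement, that a monoid satisfies condition $(A)$ precisely when every locally cyclic right act is cyclic. So, given a monoid $T$ satisfying condition $(A)$ together with a left pure submonoid inclusion $S\to T$, it suffices to take an arbitrary locally cyclic right $S$-act $X$ and prove that $X$ is cyclic; this then shows that $S$ itself satisfies condition $(A)$.

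First I would transport the problem up to $T$ via the tensor product, which is exactly what the discussion preceding the statement sets up. Since $X$ is locally cyclic it is indecomposable and satisfies condition $(P)$ (by the description of the indecomposable acts in $\CP$), so by the computations above $X\otimes_S T$ is a locally cyclic right $T$-act and the map $x\mapsto x\otimes 1$ is an $S$-monomorphism $X\to X\otimes_S T$ (using that $X$ is flat, or equivalently the left purity of $S\to T$). Because $T$ satisfies condition $(A)$, the locally cyclic $T$-act $X\otimes_S T$ must be cyclic, say $X\otimes_S T=(x_0\otimes t_0)T$ for some $x_0\in X$ and $t_0\in T$.

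The heart of the argument is to pull this cyclicity back down to $X$, and for this I would invoke the stability of left pure monomorphisms from \cite[Theorem 3.1]{renshaw-91}. Applying stability to the inclusions $\lambda:x_0 S\to X$ and $f:S\to T$ gives
$$
\im(1_X\otimes f)\cap\im(\lambda\otimes 1_T)=\im(\lambda\otimes f)
$$
inside $X\otimes_S T$, where the three images are $\{x\otimes 1:x\in X\}$, $\{x_0\otimes t:t\in T\}$ and $\{x_0\otimes s:s\in S\}$ respectively. Now fix $x\in X$. On the one hand $x\otimes 1$ obviously lies in $\im(1_X\otimes f)$; on the other hand, cyclicity of $X\otimes_S T$ yields $x\otimes 1=x_0\otimes t$ for some $t\in T$, so $x\otimes 1$ also lies in $\im(\lambda\otimes 1_T)$. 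Hence $x\otimes 1\in\im(\lambda\otimes f)$, that is $x\otimes 1=x_0 s\otimes 1$ for some $s\in S$. Since $x\mapsto x\otimes 1$ is a monomorphism it follows that $x=x_0 s$, and as $x$ was arbitrary we conclude that $X=x_0 S$ is cyclic, as required.

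Since almost all of the technical content — the behaviour of $X\otimes_S T$ under condition $(P)$ and local cyclicity, and the injectivity of $x\mapsto x\otimes 1$ — has already been established in the paragraphs preceding the statement, I expect the only genuine obstacle to be the bookkeeping in the stability step: correctly identifying the three images and verifying that $x\otimes 1$ really does lie in both $\im(1_X\otimes f)$ and $\im(\lambda\otimes 1_T)$, so that stability may be applied to place it in $\im(\lambda\otimes f)$. The essential external input is that left purity implies stability, which is quoted from \cite{renshaw-91}.
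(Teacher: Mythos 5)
Your argument is, in all essentials, the paper's own proof: tensor $X$ up along $S\to T$, use condition $(A)$ for $T$ to conclude $X\otimes_S T=(x_0\otimes t_0)T$ is cyclic, apply the stability of left pure monomorphisms (\cite[Theorem 3.1]{renshaw-91}) to the inclusions $\lambda:x_0S\to X$ and $f:S\to T$, and then descend to $X$ via the injectivity of $x\mapsto x\otimes 1$. Your identification of the three images and the placement of $x\otimes 1$ in the intersection is exactly the paper's step, and the conclusion $x=x_0s$ is correct.

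There is, however, one false intermediate claim: local cyclicity of $X$ does \emph{not} imply that $X$ satisfies condition $(P)$. The cited characterisation (\cite[Theorem 3.7]{renshaw-00}) says that the indecomposable acts \emph{within} $\CP$ are precisely the locally cyclic ones; it does not say that every locally cyclic act lies in $\CP$. For a counterexample, the one-element act $\Theta_S$ over the free monoid $S$ on two generators $a,b$ is cyclic (hence locally cyclic), but condition $(P)$ applied to $\theta a=\theta b$ would require $u,v\in S$ with $ua=vb$, which is impossible since no word ends in both $a$ and $b$. Fortunately your proof does not actually need this claim: the computation in the paper showing that $X\otimes_S T$ is locally cyclic uses only local cyclicity of $X$ (no appeal to $(P)$), and the injectivity of $x\mapsto x\otimes 1$ must be justified by the left purity of $S\to T$ --- which is a hypothesis --- rather than by flatness of $X$, which may fail here; these two justifications are not ``equivalent'' alternatives, and only the purity one is available. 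With those substitutions your argument is complete and coincides with the paper's proof.
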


\smallskip
We can also deduce

\begin{theorem}
Let $T$ be a monoid and let $\X_T=\SF_T$ or $\X_T=\CP_T$. Let ${\cal M}$ be the class of monoids such that for all $T\in{\cal M}$ there exists a cardinal $\kappa$ with $|X|<\kappa$ for all locally cyclic right $T-$acts $X\in\X_T$. Then ${\cal M}$ is closed under submonoids. In addition for any monoid $S\in{\cal M}$ every right $S-$act has an $\SF-$cover and a $CP-$cover.
\end{theorem}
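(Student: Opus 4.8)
The plan is to establish the two assertions in turn, deriving the statement about covers from the closure property together with the machinery of Section~4.

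For the closure of ${\cal M}$ under submonoids, I would fix $T\in{\cal M}$ with associated cardinal $\kappa$ and a submonoid $S$ of $T$, and then bound every locally cyclic right $S$-act in $\X_S$ by embedding it into a locally cyclic right $T$-act in $\X_T$. Let $X\in\X_S$ be locally cyclic; since such an act satisfies condition $(P)$, the discussion immediately preceding this theorem applies and shows that $X\tensor_ST$ is a locally cyclic right $T$-act which again belongs to $\X_T$ — condition $(P)$ is preserved when $\X=\CP$, and both $(P)$ and $(E)$ are preserved when $\X=\SF$ — and that the map $X\to X\tensor_ST$, $x\mapsto x\tensor1$, is an $S$-monomorphism because $X$ is flat. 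Consequently $|X|\le|X\tensor_ST|<\kappa$, so the single cardinal $\kappa$ bounds the locally cyclic acts of $\X_S$ as well, whence $S\in{\cal M}$.

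This is the substantive step, though the genuine difficulty — verifying that $\tensor_ST$ preserves local cyclicity and conditions $(P)$ and $(E)$ — has already been carried out above; what remains is only to observe that the cardinal bound transfers through the monomorphism $X\to X\tensor_ST$. I would emphasise here that, in contrast with the earlier result on condition $(A)$ (where one had to recover cyclicity of $X$ from that of $X\tensor_ST$ and therefore needed $S\to T$ to be left pure), transferring a mere cardinal bound requires no purity hypothesis whatsoever, since an injection suffices.

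For the second assertion, let $S\in{\cal M}$ and take $\X=\SF$ or $\X=\CP$. By~\cite[Theorem 3.7]{renshaw-00} the indecomposable acts of $\SF$ and $\CP$ are exactly the locally cyclic ones, so the defining property of ${\cal M}$ furnishes a cardinal $\kappa$ with $|X|<\kappa$ for every indecomposable $X\in\X_S$, which is hypothesis~(3) of Corollary~\ref{lambda-skeleton-corollary}. Hypothesis~(1) holds for both classes by Lemma~\ref{coproduct-decompose}, and hypothesis~(2) holds because $S$ is itself strongly flat, so $\text{Hom}_S(S,A)\ne\emptyset$ for every $S$-act $A$. Corollary~\ref{lambda-skeleton-corollary} then yields an $\X$-precover for every $S$-act. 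Since $\SF$ and $\CP$ are closed under directed colimits by Propositions~\ref{direct-limit-sf-proposition} and~\ref{direct-limit-p-proposition}, the theorem converting precovers into covers applies, and therefore every right $S$-act has both an $\SF$-cover and a $\CP$-cover, as required.
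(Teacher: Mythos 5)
Your proposal is correct and follows essentially the same route as the paper: embed a locally cyclic $X\in\X_S$ into $X\tensor_ST$ via $x\mapsto x\tensor 1$ (a monomorphism since $X$ is flat), use the preceding discussion to see $X\tensor_ST$ is locally cyclic and lies in $\X_T$, and transfer the cardinal bound, then obtain covers via Corollary~\ref{lambda-skeleton-corollary} and the precover-to-cover theorem. The only difference is that you spell out the second assertion (verifying the hypotheses of Corollary~\ref{lambda-skeleton-corollary} and invoking closure under directed colimits), which the paper leaves implicit; this is a faithful and correct completion, not a different argument.
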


\begin{proof}
Let $T\in{\cal M}$ and let $S$ be a submonoid of $T$. If $X\in\X_S$ is a locally cyclic right $S-$act then $X\tensor_ST\in\X_T$ is a locally cyclic right $T-$act. By assumption there exists a cardinal $\kappa$ such that $|X\tensor_ST|<\kappa$ and so since $X\to X\tensor_ST$ is a monomorphisms then $|X|<\kappa$ and hence $S\in{\cal M}$.
\end{proof}

\begin{corollary}
Let $S$ be any submonoid of the bicyclic monoid. Then every $S-$act has an $\SF-$cover and a $\CP-$cover.
\end{corollary}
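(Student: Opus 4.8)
The plan is to derive this as an immediate consequence of the preceding theorem, which asserts both that the class $\cal M$ is closed under submonoids and that every right $S-$act over a monoid $S\in\cal M$ has an $\SF-$cover and a $\CP-$cover. Since every submonoid $S$ of the bicyclic monoid $B$ will lie in $\cal M$ as soon as $B$ itself does, the entire task reduces to verifying that $B\in\cal M$ for both choices $\X_T=\SF_T$ and $\X_T=\CP_T$.

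To establish $B\in\cal M$, I would appeal to the finite-geometric-type computation carried out immediately after Proposition~\ref{fgt-proposition}. There it was checked that for each $(s,t)\in B$ and each fixed $(m,n)\in B$ the equation $(p,q)(s,t)=(m,n)$ has at most $s+1$ solutions, so $B$ satisfies the counting hypothesis of Proposition~\ref{fgt-proposition}. The proof of that proposition shows precisely that under this hypothesis every indecomposable act satisfying condition $(P)$, and hence every indecomposable strongly flat act, has cardinality at most $\aleph_0|B|^2$. By \cite[Theorem 3.7]{renshaw-00} the indecomposable acts in $\CP$ and in $\SF$ are exactly the locally cyclic ones, so this bound is precisely a cardinal $\kappa$ with $|X|<\kappa$ for every locally cyclic $X\in\SF_B$ and every locally cyclic $X\in\CP_B$. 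This is exactly the defining condition for membership in $\cal M$, so $B\in\cal M$ in both cases.

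With $B\in\cal M$ in hand, closure of $\cal M$ under submonoids (the first assertion of the preceding theorem) gives $S\in\cal M$ for every submonoid $S$ of $B$, and the second assertion of that theorem then yields that every right $S-$act has an $\SF-$cover and a $\CP-$cover, which is the claim. The point I expect to be the only substantive one is the transport of the cardinality bound through the tensor construction $X\mapsto X\tensor_S T$, but this is handled entirely inside the proof of the preceding theorem (where $X\to X\tensor_S T$ is a monomorphism of locally cyclic acts), so no genuine obstacle remains here: the quantitative work was already done in the finite-geometric-type argument, and this corollary is a direct specialisation.
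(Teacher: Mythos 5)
Your proposal is correct and follows essentially the same route the paper intends: the finite-geometric-type computation shows the bicyclic monoid lies in $\cal M$ (for both $\X=\SF$ and $\X=\CP$, via the cardinality bound on indecomposable condition $(P)$ acts from Proposition~\ref{fgt-proposition}), and the preceding theorem's closure of $\cal M$ under submonoids together with its covering conclusion yields the corollary. The only cosmetic point is that the bound $\aleph_0|B|^2$ should be replaced by a strictly larger cardinal (e.g.\ its successor) to match the strict inequality $|X|<\kappa$ in the definition of $\cal M$, which is immediate.
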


\medskip

Many of the results in this paper involve monoids belonging to ${\cal M}$. However Example~\ref{fulltransform-example} demonstrates that ${\cal M}$ is not the class of all monoids. One of the proofs of the flat cover conjecture in \cite{bican-01} involved showing that every module over a unitary ring satisfied a condition very similar to that given in Theorem~\ref{congruence-pure-theorem}. We feel that a similar situation should hold in the category of $S-$acts.

\smallskip

We hope to consider the classes of torsion free, divisible, injective and free acts in a subsequent paper.

\end{document}